\documentclass[11pt,reqno]{amsart}
\usepackage{mathrsfs}
\usepackage{amsfonts,amssymb,amsmath,amsthm}
\usepackage[colorlinks=true,citecolor=red,linkcolor=blue]{hyperref}
\usepackage{titletoc}
\usepackage{geometry}
\usepackage{bm}
\usepackage{indentfirst}
\usepackage{graphicx}
\usepackage{stmaryrd}
\usepackage{tikz}
\usepackage{color,soul}
\usepackage{setspace}
\usepackage[pagewise]{lineno}
\usepackage{float}
\usepackage{appendix}

\newcommand{\blackhyperref}[2]{%
  {%
    \hypersetup{linkcolor=black}%
    \hyperref[#1]{#2}%
  }%
}

\usepackage{enumitem}

\usepackage{todonotes}

\usepackage{cleveref}
\hypersetup{urlcolor=blue, citecolor=red, linkcolor=blue}
\usepackage[square,numbers]{natbib}

\numberwithin{equation}{section}
\theoremstyle{plain} 

\allowdisplaybreaks

\newcommand{\R}{ {\mathbb{R}}}
\newcommand{\C}{ {\mathbb{C}}}

\newcommand{\slab}{\mathcal{S}}
\newcommand{\I}{\mathrm i}
\newcommand{\e}{{\mathrm e}}

\newcommand{\Diff}{{\partial_{x_1}}}
\newcommand{\diff}{d}
\newcommand{\p}{ {\partial} }

\newcommand{\cuad}{{\sqcap\kern-.68em\sqcup}}

\newcommand{\be}{\begin{equation}}
\newcommand{\ee}{\end{equation}}

\newcommand{\pd}{\partial}

\theoremstyle{plain}
\newtheorem{theorem}{Theorem}[section]
\newtheorem{lemma}[theorem]{Lemma}
\newtheorem{prop}[theorem]{Proposition}
\newtheorem{corollary}[theorem]{Corollary}
\newtheorem{remark}{Remark}[theorem]
\newcommand{\bremark}{\begin{remark} \em}
\newcommand{\eremark}{\end{remark} }

\title[Steady water waves with localized vorticities]{Smooth stationary water waves with multiple vortex spikes}
\author[Z. Fan]{Zhiqiang Fan}
\address{\noindent School of Mathematics, Tianjin University, Tianjin 300072, P. R. China}
\email{fanzq@tju.edu.cn}
\author[Y. Wang]{Yuchen Wang}
\address{\noindent School of Mathematical Science, Tianjin Normal University,
Tianjin 300074, P. R. China}
\email{wangyuchen@mail.nankai.edu.cn}
\author[Y. Zheng]{Youquan Zheng}
\address{\noindent School of Mathematics, Tianjin University, Tianjin 300072, P. R. China}
\email{zhengyq@tju.edu.cn}

\begin{document}
\begin{abstract}
For any prescribed positive integer $n$, we establish the existence of a family of stationary solutions to the two dimensional capillary-gravity water wave equations in finite depth,  
whose stream functions exhibit $n$ well-separated vertically aligned vortex spikes with alternating signs. The construction is based on a nonlinear gluing scheme for a singularly perturbed semilinear elliptic equation coupled to the Bernoulli equation on the free surface.
Our results extend the single-spike construction to arbitrary finite multi-spike configurations and reveal a mechanism through which exponentially localized vortex spikes can coexist with an almost flat free surface. 
\vspace{0.3 cm}

{\bf Keywords: } Capillary-gravity water waves; stationary Euler equations; localized vorticity; alternating-sign spikes; free-boundary problems; gluing method.

\end{abstract}

\maketitle

{\hypersetup{linkcolor=magenta} \tableofcontents}

\section{Introduction}\label{sec:intro}

The dynamics of water waves constitute a central subject in the mathematical theory of fluid mechanics. In the irrotational setting, substantial progress has been made in understanding the existence, regularity, stability, and long-time dynamics of gravity and capillary-gravity waves, see for example \cite{Lannes2013}. By contrast, the corresponding questions for rotational flows remain considerably more challenging. The presence of vorticity introduces an additional transport mechanism in the fluid bulk and leads to a nontrivial coupling between the interior fluid motion and the evolution of the free surface, we refer to interesting readers to the monograph \cite{Adrian2011}.

Consider the two-dimensional incompressible Euler
equations with gravity in a fluid domain of finite depth.
The unknown fluid domain is given by
\begin{equation}\label{1.1b}
\Omega(t) = \left\{
(x_1,x_2)\in\mathbb R^2:
-1<x_2<1+\zeta(t,x_1)
\right\},
\end{equation}
where $\zeta(t,x_1)$ denotes the displacement of the free
surface from the rest position $x_2=1$. Suppose that $\zeta(t,\cdot)\in H^s(\mathbb R)$,
$s>3/2$, and that
\[
\|\zeta(t,\cdot)\|_{L^\infty(\mathbb R)}
\leq \frac12.
\]
In particular, the free surface remains well-separated
from the flat bottom
\[
\mathbf B=\{(x_1,x_2):x_2=-1\}.
\]
The free boundary will be denoted by
\[
S(t)=\{(x_1,1+\zeta(t,x_1)):x_1\in\mathbb R\}.
\]

Let $v=(v_1,v_2)$ be the fluid velocity and $p$ the
pressure. The governing equations are
\begin{subequations}\label{1.1}
\begin{equation}\label{1.1a}
\begin{cases}
\partial_t v+(v\cdot\nabla)v+\nabla p
=-g\vec e_2,
& x\in\Omega(t),\\
\nabla\cdot v=0,
& x\in\Omega(t),
\end{cases}
\end{equation}
together with the kinematic boundary condition
\begin{equation}\label{1.1c}
\partial_t\zeta
=
v_2-v_1\partial_{x_1}\zeta
\qquad\text{on }S(t),
\end{equation}
and the impermeability condition
\begin{equation}\label{1.1d}
v_2=0
\qquad\text{on }\mathbf B.
\end{equation}
On the free surface, the Young--Laplace law gives
\begin{equation}\label{1.1e}
p=\alpha^2\kappa,
\qquad
\kappa
=
-\frac{\partial_{x_1}^2\zeta}
{\bigl(1+(\partial_{x_1}\zeta)^2\bigr)^{3/2}},
\end{equation}
where $g>0$ is the gravitational acceleration and
$\alpha^2>0$ is the surface-tension coefficient.
\end{subequations}
The pressure in the region above the free surface has
been normalized to zero.

The local well-posedness theory for free-boundary
incompressible Euler equations, including flows with
nontrivial vorticity, has been developed in a number
of works; see, for example,
\cite{ChristodoulouLindblad2000,Lindblad2005,
CoutandShkoller2007,ShatahZeng2008Geometry,
ShatahZeng2011,ShatahZeng2008interface}
and the references therein. Nevertheless, the construction and qualitative analysis
of coherent structures in rotational water-wave dynamics
remain relatively less understood. One of the principal
difficulties is that the free surface is affected not only
by the velocity field but also, nonlocally, by the nonlinear interactions of vorticity within the fluid. We also want to point out that the rotational water-wave equations admit a noncanonical
Hamiltonian formulation
\cite{Lannes2015} with a generalized Dirichlet--Neumann operator. The corresponding energy is given as:
\begin{equation}\label{E:energy}
\begin{aligned}
E(v,\zeta)
=\frac12\int_{\Omega(t)}|v|^2\,dx
+\frac g2\int_{\mathbb R}\zeta^2\,dx_1
+\alpha^2\int_{\mathbb R}
\left(
\sqrt{1+(\partial_{x_1}\zeta)^2}-1
\right)\,dx_1.
\end{aligned}
\end{equation}
This structure provides a natural framework for studying
finite-energy solutions, for which the kinetic energy is
finite and the free-surface perturbation decays at spatial
infinity.

A particularly interesting problem is to understand how
localized vorticity structures, which have been extensively studied in the incompressible Euler equations in fixed domains \cite{CaoLiuWei2014,CPY2015,Didier2010}, can coexist with a free surface, while retaining a finite total energy. In a finite-depth fluid, concentrated vorticity interacts with both the rigid bottom and the water-air interface,
thereby producing a delicate balance between nonlinear
interior dynamics and boundary deformation. It is therefore
natural to investigate whether several strongly concentrated
vorticity structures can be arranged into a stationary
configuration. Such solutions need not
be small in velocity or vorticity while the resulting deformation of the free
surface remains small.

A fundamental class of coherent structures consists of traveling waves:
\begin{equation}\label{E:TW}
v(t,x_1,x_2)
=v(x_1-\mathbf ct,x_2),
\qquad
\zeta(t,x_1)
=
\zeta(x_1-\mathbf ct),
\end{equation}
where $\mathbf c\in\mathbb R$ is the propagation speed of the wave and the free-surface displacement tends to zero as
$|x_1|\to\infty$.

In this paper, we restrict attention to stationary
solutions, corresponding to $\mathbf c=0$.
We seek localized configurations satisfying
\[
v\in L^2(\Omega),
\qquad
\zeta\in H^2(\mathbb R),
\]
where
\[
\Omega
=
\{(x_1,x_2):-1<x_2<1+\zeta(x_1)\}.
\]

For two-dimensional incompressible flows, the scalar
vorticity is defined by
\[
\omega
=
\partial_{x_1}v_2-\partial_{x_2}v_1.
\]
Taking the curl of the Euler equations gives the
vorticity transport equation
\begin{equation}\label{1.2}
\partial_t\omega+v\cdot\nabla\omega=0.
\end{equation}
Consequently, a stationary solution satisfies
\begin{equation}\label{1.4}
\begin{cases}
v\cdot\nabla\omega=0,
& x\in\Omega,\\
v_2-v_1\partial_{x_1}\zeta=0,
& x\in S.
\end{cases}
\end{equation}

Since the fluid is incompressible, we introduce a stream
function $\Psi$ through
\[
v=\nabla^\perp\Psi
=
(-\partial_{x_2}\Psi,\partial_{x_1}\Psi).
\]
Then
\[
\omega=\Delta\Psi,
\]
and the stationary vorticity equation becomes
\begin{equation}\label{E:Steady-Euler}
\nabla^\perp\Psi\cdot\nabla\Delta\Psi=0
\qquad\text{in }\Omega.
\end{equation}

The kinematic boundary condition implies that $\Psi$
is constant along the free surface. Likewise, the
impermeability condition implies that $\Psi$ is constant
along the rigid bottom. For localized stationary flows
with finite kinetic energy, the horizontal volume flux
vanishes, and hence these two boundary constants agree.
After adding a constant to the stream function, we
may therefore impose
\begin{equation}\label{E:Dirichlet}
\Psi=0
\qquad\text{on }\partial\Omega.
\end{equation}

To obtain a formulation of elliptic equations, it is sufficient to prescribe
the vorticity as a nonlinear function of the stream
function. More precisely, we consider
\begin{equation}\label{E:vorticity-strength}
\omega
=
\frac1{\delta^2}\gamma(\Psi),
\end{equation}
where $0<\delta\ll1$ is a concentration parameter
and $\gamma$ is a given smooth nonlinearity.

Since
\[
\nabla^\perp\Psi\cdot\nabla\gamma(\Psi)=0,
\]
the stationary vorticity equation is automatically
satisfied whenever $\Psi$ solves
\begin{equation}\label{eq:main}
\begin{cases}
\displaystyle
\Delta\Psi=\frac1{\delta^2}\gamma(\Psi),
& x\in\Omega,\\[2mm]
\Psi=0,
& x\in\partial\Omega.
\end{cases}
\end{equation}

The remaining condition is furnished by the Bernoulli
equation on the free surface. Let
\[
\Gamma(t)=\int_0^t\gamma(s)\,ds.
\]
For the stationary Euler flow satisfying
\eqref{E:vorticity-strength}, the momentum equation
implies
\[
\nabla\left(
\frac12|\nabla\Psi|^2
+p+gx_2
-\frac1{\delta^2}\Gamma(\Psi)
\right)=0.
\]
Assuming that the velocity and the surface displacement
vanish at spatial infinity, the constant is equal to $g$.
Since $\Psi=0$ on $S$ and $\Gamma(0)=0$, the
Young--Laplace condition yields
\begin{equation}\label{E:Bernoulli}
\frac12|\nabla\Psi|^2
+g\zeta+\alpha^2\kappa=0
\qquad\text{on }S.
\end{equation}

Thus, the construction of stationary rotational
capillary-gravity waves is reduced to the coupled
elliptic free-boundary system
\begin{equation}\label{E:free-boundary-system}
\begin{cases}
\displaystyle
\Delta\Psi=\frac1{\delta^2}\gamma(\Psi),
& x\in\Omega,\\[2mm]
\Psi=0,
& x\in\partial\Omega,\\[2mm]
\displaystyle
\frac12|\nabla\Psi|^2
+g\zeta+\alpha^2\kappa=0,
& x\in S,
\end{cases}
\end{equation}
where the fluid domain $\Omega$ is itself unknown.

The main difficulty is that the geometry of the domain
depends on the solution through the nonlinear
Bernoulli condition. In particular, the vorticity
concentrations produce a velocity field whose trace
on the free surface determines the resulting
boundary deformation.

The existence and qualitative properties of steady
water waves carrying localized vorticity have attracted
considerable attention in recent years. An important contribution was made by Shatah, Walsh, and Zeng \cite{ShatahWalshZeng2013}, who established
the existence of traveling capillary-gravity water waves with compactly supported vorticity, including configurations involving submerged point vortices
and vortex patches. In finite depth, Varholm \cite{Varholm2016}
constructed solitary gravity-capillary waves
carrying finitely many point vortices arranged
along a vertical line.The existence, nonexistence, and far-field
asymptotics of deep-water solitary waves with
loca lized vorticity were subsequently investigated
by Chen, Walsh, and Wheeler
\cite{ChenWalshWheeler2019}.

Further developments include the construction
and instability analysis of solitary waves
carrying a finite vortex dipole
\cite{Le2019}, the conditional orbital stability
of solitary waves with a submerged point vortex
\cite{VarholmWahlenWalsh2020}, and the long-time
dynamics of water waves interacting with point
vortices \cite{Su2020}. More recently, Chen, Varholm, Walsh, and Wheeler
\cite{ChenVarholmWalshWheeler2025}
established the existence of large-amplitude
solitary gravity waves carrying a submerged
point vortex, without requiring surface tension.

A particularly relevant result for the present
work was obtained by Ehrnstrom, Walsh, and Zeng
\cite{MR4577959}. They constructed smooth,
finite-energy stationary capillary-gravity waves with exponentially localized vorticity. Their approach revealed a connection between
stationary rotational water waves and singularly
perturbed semilinear elliptic equations.
In particular, the construction is based on
a localized ground-state profile, allowing
the kinetic energy to remain nonvanishing
even when the free-surface displacement
is exponentially small. Motivated by this development, we investigate stationary water waves possessing multiple
localized stream-function spikes with
alternating signs.

More precisely, we are interested in solutions for which the vorticity
is concentrated near finitely many points in the fluid
interior. The natural building block is a positive,
radially symmetric, spatially localized solution of
the entire-space semilinear elliptic equation
\begin{equation}\label{eq:ground state}
\Delta U=\gamma(U)
\qquad\text{in }\mathbb R^2.
\end{equation}

The concentration parameter $\delta$ determines
the characteristic size of each localized core.
Indeed, if $x_0\in\Omega$, then
\[
U\left(\frac{x-x_0}{\delta}\right)
\]
solves the rescaled equation
\[
\Delta U_\delta
=
\frac1{\delta^2}\gamma(U_\delta)
\]
in the whole plane.

We impose the following assumptions on the nonlinearity
and its ground state.

\begin{enumerate}
\item[(A)]\label{assumption:A}
There exists an integer $k_0\geq3$ such that
$\gamma\in C^{k_0}(\mathbb R)$ and
\[
\gamma(0)=0,\qquad
\gamma'(0)=1,\qquad
\gamma''(t)<0\quad\text{for }t>0.
\]
Moreover, $\gamma$ is odd:
\[
\gamma(-t)=-\gamma(t).
\]
Equation \eqref{eq:ground state} admits a nontrivial
positive radial solution
$U\in C^{k_0+2}(\mathbb R^2)$ satisfying
\[
U(x),\nabla U(x)\longrightarrow0
\qquad\text{as }|x|\to\infty.
\]

\item[(B)]\label{assumption:B}
The ground state $U$ is nondegenerate modulo
translations, in the sense that
\[
\ker\left(
-\Delta+\gamma'(U)
\right)
=
\operatorname{span}
\left\{
\partial_{x_1}U,\partial_{x_2}U
\right\}
\]
as an operator from $H^2(\mathbb R^2)$ into
$L^2(\mathbb R^2)$.
\end{enumerate}

A representative example is
\[
\gamma(t)=t-t^3,
\]
for which \eqref{eq:ground state} becomes
\[
-\Delta U+U=U^3.
\]
More generally, nonlinearities of the form
\[
\gamma(t)=t-|t|^pt
\]
may be considered whenever the exponent and the
corresponding ground state satisfy the regularity
and nondegeneracy assumptions above.

Under these assumptions, the ground state and its
derivatives of all orders decay exponentially at infinity.
The nondegeneracy property is essential for the
perturbative construction, since the two
translational modes are the only elements of
the kernel of the linearized operator.

The oddness of $\gamma$ is particularly important
in the present problem. Indeed, if $U$ solves
\eqref{eq:ground state}, then $-U$ is also a
solution, and
\[
\gamma'(-U)=\gamma'(U).
\]
Thus positive and negative stream-function spikes
have identical linearized operators. This permits
the construction of alternating-sign configurations
using a common ground-state profile.

The purpose of this paper is to construct a family of
finite-energy stationary capillary-gravity water waves
containing an arbitrary prescribed number of localized
vorticity concentrations. More precisely, we establish the
existence of solutions whose stream functions exhibit
multiple vertically aligned spikes with alternating signs.
The vorticity becomes increasingly concentrated as a small
parameter tends to zero, whereas the free-surface
deformation is exponentially small. The construction
reveals an interaction mechanism that involves neighboring
vorticity concentrations and the two boundaries of the
fluid domain.

To this end, we shall construct solutions whose leading-order
stream functions take the form
\begin{equation}\label{eq:def psi}
\Psi(x)
=
\sum_{j=1}^n
(-1)^{j+1} U\left(\frac{x-x_j}{\delta}\right)
+\text{lower-order terms},
\end{equation}
where $x_j$ are the concentration points to be
determined.

Correspondingly, the vorticity has the leading
structure
\[
\omega(x)
\approx
\frac1{\delta^2}
\sum_{j=1}^n
(-1)^{j+1}
\Delta U\left(\frac{x-x_j}{\delta}\right).
\]
The localization of the ground state implies
that the different cores interact only
exponentially weakly when their mutual distances
remain bounded away from zero. These exponentially small interactions
play a decisive role in determining the admissible
positions of the spikes.

Our main theorem establishes the existence of
stationary water waves with an arbitrary finite
number of vertically aligned, alternating-sign
stream-function spikes.
\begin{theorem}\label{main euler theorem}
Assume that $\gamma$ satisfies
{\rm (A)} and {\rm (B)}.
Let $g>0$, $\alpha>0$, and let $n\geq1$
be an integer.

There exists $\delta_0>0$ such that, for every
$\delta\in(0,\delta_0)$, the stationary
capillary-gravity water-wave problem
\eqref{E:free-boundary-system}
admits a solution
\[
(\Psi_\delta,\zeta_\delta)
\in
\bigl(
H^2(\Omega)
\cap H_0^1(\Omega)
\bigr)
\times H^2(\mathbb R)
\]
possessing exactly $n$ localized stream-function
spikes.

More precisely, there exist concentration heights
\[
\boldsymbol{\tau}
=
(\tau_1,\ldots,\tau_n)\in\mathbb R^n
\]
satisfying
\begin{equation}\label{E:spike-locations}
\left|
\tau_j-\frac{n-2j+1}{n}
\right|
\leq
C\delta|\log\delta|,
\qquad j=1,\ldots,n,
\end{equation}
such that
\begin{equation}\label{E:spike-expansion}
\Psi_\delta(x)
=
\sum_{j=1}^n
\sigma_j
\left[
U_j\left(\frac{x}{\delta}\right)
-
(U_j)_{\mathrm{bc}}
\left(\frac{x}{\delta}\right)
\right]
+
\widetilde v\left(\frac{x}{\delta}\right),
\end{equation}
where
\[
U_j(y)
=
U\left(
y-\frac{\tau_j}{\delta}\vec e_2
\right),
\qquad
\sigma_j=(-1)^{j+1}.
\]
Here $(U_j)_{\mathrm{bc}}$ denotes the
corresponding boundary correction, as defined
in Section~\ref{perturbed problem section}.

Furthermore, the correction term and the
free-surface displacement satisfy
\begin{equation}\label{E:main-estimates}
\left\|
\widetilde v(\cdot/\delta)
\right\|_{H^2(\Omega)}
\leq
C\delta^{\beta}
\exp\left(-\frac{2}{n\delta}\right)
\end{equation}
and
\begin{equation}\label{E:surface-estimate}
\|\zeta_\delta\|_{H^2(\mathbb R)}
\leq
C\delta^{\beta}|\log\delta|
\exp\left(-\frac{2}{n\delta}\right).
\end{equation}
The constants $C$ and $\beta$ are independent of sufficiently small $\delta$.
\end{theorem}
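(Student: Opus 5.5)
We will follow the strategy of Ehrnstr\"om--Walsh--Zeng \cite{MR4577959} and organize the construction as a Lyapunov--Schmidt reduction for the interior elliptic problem, coupled to a fixed-point argument for the free surface.

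\textbf{Step 1: flattening the domain and rescaling.} Given $\zeta\in H^2(\mathbb R)$ with small norm, we flatten $\Omega$ to the strip $\{-1<x_2<1\}$ by a diffeomorphism $x_2\mapsto x_2+\frac{1+x_2}{2}\zeta(x_1)$, and then rescale $y=x/\delta$. The equation becomes
\[
\Delta_y\psi-\gamma(\psi)=\mathcal L_\zeta\psi
\]
on the strip $\slab_\delta=\{|y_2|<1/\delta\}$ with Dirichlet data. Here $\mathcal L_\zeta$ is a perturbation of the Laplacian whose coefficients are bounded by $\|\zeta\|_{H^2}$.

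\textbf{Step 2: approximate solution and reduction.} For fixed $\zeta$, take as ansatz
\[
W=\sum_j\sigma_j\bigl[U_j-(U_j)_{\mathrm{bc}}\bigr],
\]
where $(U_j)_{\mathrm{bc}}$ solves $(-\Delta+1)(U_j)_{\mathrm{bc}}=0$ with boundary values equal to those of $U_j$. Only vertical translations are needed if we impose evenness in $y_1$. Symmetry then removes the $\partial_{y_1}U$ kernel element, leaving the $n$ modes $\partial_{y_2}U_j$.

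The error of this ansatz comes from two sources. The first is the nonlinear interaction between neighbours, of size $e^{-|\tau_j-\tau_{j+1}|/\delta}$, with the usual algebraic prefactor $\delta^{1/2}$ from the decay $U\sim c|y|^{-1/2}e^{-|y|}$. The second is the boundary correction, of size $e^{-2(1\mp\tau_j)/\delta}$ near the outer spikes.

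Using (B), the oddness of $\gamma$ (which gives $\gamma'(-U)=\gamma'(U)$), and the fact that the spikes are separated by a distance $\gtrsim1/\delta$, we prove uniform invertibility of the linearized operator on the orthogonal complement of $\{\partial_{y_2}U_j\}$. This is done by the standard contradiction/blow-up argument in weighted $H^2$. A contraction mapping then yields a correction $\phi$ with $\|\phi\|_{H^2}\lesssim$ (error), together with Lagrange multipliers $c_j(\boldsymbol\tau,\zeta)$.

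\textbf{Step 3: the reduced problem.} This is the main obstacle. We compute
\[
c_j\approx\int(\text{equation})\,\partial_{y_2}U_j.
\]
For the alternating signs, neighbouring spikes attract each other, while each outer spike is repelled by its image charge of opposite sign. The reduced system thus has the form
\[
A\bigl(e^{-2(1-\tau_1)/\delta}-e^{-(\tau_1-\tau_2)/\delta}\bigr)+\text{h.o.t.}=0,
\]
and similarly for the other $j$. It is balanced when all gaps equal $2/n$: the boundary-to-spike distances doubled and the inter-spike distances coincide. This gives the leading positions $\tau_j=\frac{n-2j+1}{n}$.

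The algebraic prefactors and the $\zeta$-dependent terms shift the balance by $O(\delta|\log\delta|)$ in $\tau$. We will therefore introduce the logarithmic variables $s_j=e^{-(\text{gap}_j)/\delta}$ and write the system as a perturbation of a nondegenerate tridiagonal (Jacobi-type) linear system in the log-gaps. We then solve it by the Brouwer degree, or the implicit function theorem, on a box of size $C\delta|\log\delta|$ around the balanced configuration. One must check that the nonlocal $\zeta$ contribution to $c_j$ is of relative size $\ll$ the interaction itself; this holds once $\zeta$ is exponentially small.

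\textbf{Step 4: the Bernoulli equation.} With $\Psi=\Psi(\zeta)$ determined, the surface condition reads
\[
g\zeta-\alpha^2\partial_{x_1}^2\zeta=-\tfrac12|\nabla\Psi|^2+N(\zeta),
\]
where $N(\zeta)$ collects the nonlinear curvature terms. The operator $g-\alpha^2\partial^2_{x_1}:H^2\to L^2$ is invertible. On $S$ we have $|\nabla\Psi|\sim\delta^{-1}\bigl|\nabla U\bigl((x-x_1)/\delta\bigr)\bigr|$, evaluated at distance $\approx1-\tau_1=1/n$. Squaring this and taking the $L^2(dx_1)$ norm gives size $\delta^{\beta}e^{-2/(n\delta)}$, with the Dirichlet correction contributing comparably.

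A contraction in a ball of radius $C\delta^\beta|\log\delta|e^{-2/(n\delta)}$ in $H^2(\mathbb R)$ closes the argument. This uses the Lipschitz dependence of $(\phi,\boldsymbol\tau)$ on $\zeta$, which follows from the implicit function theorem structure in Steps 2--3; the $|\log\delta|$ factor comes from the $\zeta$-derivative of $\boldsymbol\tau$. Estimates \eqref{E:main-estimates}--\eqref{E:surface-estimate} follow, since the error size of Step 2 at the balanced configuration is precisely $e^{-2/(n\delta)}$ up to powers of $\delta$.
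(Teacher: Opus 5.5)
Your architecture is the right one: boundary-corrected ansatz, reduction onto the modes $\partial_{y_2}U_j$, a log-linear tridiagonal system for the heights, and inversion of $g-\alpha^2\partial_{x_1}^2$. But the step you yourself call the main obstacle rests on a size argument that fails.

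\textbf{The gap.} You assert that the $\zeta$-contribution to $c_j$ is negligible ``once $\zeta$ is exponentially small''. However, $\zeta$ is forced by $\frac12|\nabla\Psi|^2|_S$, so it is exponentially small at exactly the rate $e^{-2(1-\tau_1)/\delta}\approx e^{-2/(n\delta)}$ of the forces you are balancing. Moreover, your global flattening $x_2\mapsto x_2+\frac{1+x_2}{2}\zeta(x_1)$ perturbs the operator by $O(\zeta)$ at the cores themselves. To first order in $\zeta$,
\[
\mathcal L_\zeta=(1+\delta y_2)\zeta'(\delta y_1)\partial_{y_1}\partial_{y_2}+\tfrac{\delta}{2}(1+\delta y_2)\zeta''(\delta y_1)\partial_{y_2}+\zeta(\delta y_1)\partial_{y_2}^2 .
\]
Test $\mathcal L_\zeta U_1$ against $\partial_{y_2}U_1$. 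The last term gives $0$, while the first two give $\mp\frac{1+\tau_1}{2}\,\delta\int\zeta''(\delta y_1)(\partial_{y_2}U_1)^2\,dy$. At the actual solution, $\alpha^2\zeta''(0)\approx\frac12|\nabla\Psi|^2|_S(0)\simeq\delta^{-1}e^{-2(1-\tau_1)/\delta}$. So each of these terms is $\simeq e^{-2(1-\tau_1)/\delta}$, larger by a factor $\delta^{-1/2}$ than the image term $\simeq\delta^{1/2}e^{-2(1-\tau_1)/\delta}$ you are balancing. No norm bound on $\zeta$ can therefore control the reduced equations.

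For the same reason, the implicit-function bound on $\partial_\zeta\boldsymbol\tau$ that such estimates give is exponentially large. The $\boldsymbol\tau$-Jacobian is $\simeq e^{-2/(n\delta)}$ up to powers of $\delta$, while the $\zeta$-derivative is only algebraically small. This also removes the Lipschitz input your Step 4 contraction relies on.

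\textbf{The missing idea and how to repair it.} What is missing is an exact cancellation. The two terms above cancel identically after integrating by parts in $y_1$. This is the linearised form of the translation (Pohozaev) identity
\[
\int_\Omega(-\Delta\Psi+\gamma(\Psi))\,\partial_{Y_2}\Psi\,dY=-\tfrac12\int_{\partial\Omega}|\nabla\Psi|^2N_2\,dS
\]
for $\Psi|_{\partial\Omega}=0$, with $Y$ the rescaled physical coordinates.

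The paper's reduction is built on this identity. It flattens conformally and tests against $\partial_{x_2}\varphi_i/(1+\widetilde\Gamma'(\delta\,\cdot))$, the pull-back of $\partial_{Y_2}\Psi_i$. Then $I_1$ is an exact boundary integral and $I_2=\int\partial_{Y_2}\mathcal G(\Psi_i)\,dY=0$. The surface therefore enters the self-interaction only through factors $1+O(\|\widetilde\Gamma_{\mathrm s}\|)$ multiplying already exponentially small boundary terms, and only the cross terms $I_3,I_4$ are estimated by size.

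In your coordinates the device is even simpler. Since $dY=(1+\frac{\zeta}{2})\,dy$ and $\partial_{Y_2}=(1+\frac{\zeta}{2})^{-1}\partial_{y_2}$, you can test the flattened equation, with flat measure, against $\partial_{y_2}$ of the $j$-th piece of the approximate solution (the analogue of the paper's $\varphi_i$). This reproduces the physical identity exactly.

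Alternatively, use the flattening $x_2\mapsto x_2+\chi(x_2)\zeta(x_1)$ with $\chi(1)=1$, $\chi=0$ on $\{x_2<1-\epsilon'\}$ and $\epsilon'<1/n$. Then the cores see the unperturbed Laplacian, the $\zeta$-terms gain a factor $e^{-2(1/n-\epsilon')/\delta}$ up to powers of $\delta$, and your size argument becomes valid.

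\textbf{A smaller point.} Your heuristic contradicts your own formula. The image of spike $1$ carries the sign $-\sigma_1=\sigma_2$, the same as its neighbour, so both interactions are of the same type (here both repulsive). The balance comes from their acting from opposite sides, which is exactly what the difference in your reduced equation encodes.
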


The theorem extends the single-spike construction
of \cite{MR4577959} to configurations containing
an arbitrary finite number of alternating-sign
spikes. In contrast to a single concentration,
the multi-spike construction requires a detailed
analysis of the interactions among neighboring
cores and between the cores and the fluid boundaries.

An important feature of the result is the
asymptotic distribution of the concentration
points. By \eqref{E:spike-locations},
\[
\tau_j
\longrightarrow
\frac{n-2j+1}{n}
\qquad\text{as }\delta\to0.
\]
Hence the limiting concentration points are
equally spaced along the vertical segment
joining the bottom and the undisturbed free
surface. In particular,
\[
\tau_j-\tau_{j+1}
=
\frac2n+O(\delta|\log\delta|),
\qquad j=1,\ldots,n-1.
\]

The distance between each outermost concentration
point and the corresponding boundary tends to
$1/n$. Thus the nearest-neighbor separation is
twice the limiting distance from an outermost
spike to the adjacent boundary.

This geometric arrangement reflects the balance
between the exponentially weak interactions
of neighboring spikes and the boundary effects.
Although these interactions disappear in the
leading-order limit, they determine the
finite-dimensional conditions governing
the positions of the concentrations.

We emphasize several consequences of the main
theorem. First, the constructed solutions have finite,
nonvanishing kinetic energy in the singular
limit $\delta\to0$. Indeed, the spikes are well separated, and their mutual interactions are exponentially small.
The scaling invariance of the Dirichlet integral
in two dimensions yields
\begin{equation}\label{E:kinetic-asymptotics}
\begin{aligned}
\int_{\Omega_\delta}
|\nabla\Psi_\delta|^2\,dx
&=
\sum_{j=1}^n
\int_{\mathbb R^2}|\nabla U|^2\,dy
+o(1)\\
&=
n\|\nabla U\|_{L^2(\mathbb R^2)}^2
+o(1).
\end{aligned}
\end{equation}
Consequently, the kinetic energy follows
\[
\frac12
\int_{\Omega_\delta}|v_\delta|^2\,dx
\longrightarrow
\frac n2
\int_{\mathbb R^2}|\nabla U|^2\,dy.
\]

Thus, although the fluid velocity becomes
strongly localized, its total kinetic energy
remains of order one.

Second, the vorticity becomes increasingly
concentrated. Since
\[
\omega_\delta = \frac1{\delta^2}\gamma(\Psi_\delta),
\]
its magnitude satisfies
\[
\|\omega_\delta\|_{L^\infty(\Omega_\delta)}
\simeq \delta^{-2}.
\]
Moreover, the localization and separation
of the spikes imply
\begin{equation}\label{E:vorticity-L1}
\|\omega_\delta\|_{L^1(\Omega_\delta)}
=
n\|\Delta U\|_{L^1(\mathbb R^2)}
+o(1).
\end{equation}

On the other hand, integrating
\eqref{eq:ground state} over the whole plane gives
\[
\int_{\mathbb R^2}\Delta U\,dx=0.
\]
Therefore each isolated ground-state core has
zero net circulation, although its absolute
vorticity is nonzero.

The total circulation of the constructed
configuration is consequently asymptotically
small:
\begin{equation}\label{E:total-vorticity}
\int_{\Omega_\delta}\omega_\delta\,dx
=
\int_{\partial\Omega_\delta}
\partial_N\Psi_\delta\,dS
=o(1).
\end{equation}
In fact, the exponential localization of the
spikes and their separation from the boundary
yield an exponentially small boundary flux.

These properties distinguish the solutions
from classical point-vortex configurations
with prescribed nonzero circulations.
The present solutions instead describe
localized, sign-changing vorticity structures
generated by nonlinear elliptic ground states.
Third, despite the strong concentration of the
interior vorticity, the resulting displacement
of the free surface is exponentially small.
Indeed, \eqref{E:surface-estimate} implies
\[
\|\zeta_\delta\|_{H^2(\mathbb R)}
\longrightarrow0
\]
faster than any algebraic power of $\delta$.
The underlying mechanism can be seen directly
from the Bernoulli condition. Since the spikes
remain at positive distances from the free
surface, their induced velocity on the surface
is exponentially small. Accordingly, the
boundary forcing
\[
-\frac12|\nabla\Psi_\delta|^2
\]
is exponentially small.

To leading order, the free-surface equation
takes the form
\begin{equation}\label{E:surface-linearized}
(g-\alpha^2\partial_{x_1}^2)
\zeta_\delta
\sim -\left.\frac12
|\nabla\Psi_\delta|^2\right|_{S}.
\end{equation}
The positivity of the inverse of
$g-\alpha^2\partial_{x_1}^2$ explains why
the leading-order deformation is a depression
rather than an elevation.
Thus the constructed solutions exhibit a
separation of scales: the vorticity amplitude
is of order $\delta^{-2}$, the kinetic energy
is of order one, and the deformation of the
free surface is exponentially small.
In contrast to a single-spike configuration,
the presence of several concentrations
introduces additional interaction effects.
The positions of the individual spikes
must be determined simultaneously through
a balance involving neighboring spikes
and the boundaries of the fluid domain.

The main contribution in this paper is to
construct, for every prescribed integer
$n\geq 1$, a family of finite-energy
stationary capillary-gravity waves whose
stream functions exhibit exactly $n$
well-separated, alternating-sign spikes.
The concentration centers approach an
equally spaced vertical configuration,
while the free-surface deformation
remains exponentially small.
Our construction is based on a nonlinear
gluing procedure combining the
nondegeneracy of the entire-space ground
state with a finite-dimensional reduction
for the concentration parameters.

Although the resulting vorticity is
strongly concentrated, each individual
core may contain both positive and
negative vorticity. Thus the alternating-sign
structure refers specifically to the
stream-function spikes.
The resulting family provides a new
class of stationary rotational water waves
in which the internal vorticity distribution
exhibits an increasingly complicated
multi-core structure, while the free surface
remains close to its undisturbed position.

\subsection*{Outline of the construction}

The proof is based on a nonlinear gluing
scheme for the coupled elliptic free-boundary
problem, building on methods developed for
localized solutions of semilinear elliptic
equations; see, for example, \cite{MR2270164,MR2846486}.

We begin with a superposition of $n$
alternating-sign copies of the entire-space
ground state, positioned along a vertical
line in the reference strip
\[
\Omega_0=\mathbb R\times(-1,1).
\]
Each spike is modified by a suitable boundary
correction so that the resulting approximation
satisfies the Dirichlet condition on the
fluid boundary.

The next step is to analyze the linearized
elliptic operator around this approximate
multi-spike configuration. The nondegeneracy
assumption {\rm (B)} identifies the
translational modes as the only leading-order
obstructions to invertibility. Suitable
orthogonality conditions are then imposed to
control these modes and to solve for the
infinite-dimensional correction.

A further difficulty arises from the unknown
free boundary. The Bernoulli condition couples
the surface profile to the velocity generated
by the concentrated vorticity. This coupling
is handled perturbatively, taking advantage
of the exponential separation between the
spike cores and the free surface.

Finally, the solvability conditions associated
with the translational modes lead to a
finite-dimensional system for the spike
locations. Its dominant terms describe the
interactions between neighboring spikes
and their reflections across the fluid
boundaries. Solving this system yields the
asymptotic location formula
\eqref{E:spike-locations} and closes the
gluing construction.

The resulting approach demonstrates how
the nonlinear gluing techniques for elliptic spike solutions
can be adapted to a nonlinear free-boundary
fluid problem. It also suggests possible
extensions to other concentrated-vorticity
configurations, including more general
multi-core arrangements and rotational
traveling waves. We note that this approach are also adopted to construct overhanging solitary water waves with weak gravity \cite{MR5018790}.

\medskip
\noindent\textbf{Notation.}
Throughout the paper, $C$ and $c$ denote
positive constants which may change from
line to line but are independent of
sufficiently small $\delta$.
We write $a\lesssim b$ if $a\leq Cb$,
and $a\simeq b$ if both $a\lesssim b$
and $b\lesssim a$.
For a function $f$ in an $L^2$ space,
$f^\perp$ denotes an appropriate
$L^2$-orthogonal complement when
the underlying subspace is clear
from the context.

\section{Reformulation of the elliptic free boundary problem}\label{sec:perturbation}

The steady water-wave problem is a classic elliptic free-boundary problem, where the fluid domain itself constitutes part of the unknowns. To address this difficulty, we employ a conformal mapping to transform the unknown fluid domain into a fixed reference domain.

\subsection{Conformal Mapping}\label{1448}

A conformal parametrization is introduced to reformulate the free-boundary problem on a fixed reference domain. In this paper, we work with the water waves with a flat bottom.  

The fluid domain is defined by
$$\Omega = \{(x_1,x_2)\in\R^2\;|\; -1< x_2 < 1 + \zeta(x_1) \}.$$
The complex strip
$$\slab:=\{z_1+\I z_2\in\C\;|\;|z_2|<1\}$$
serves as the reference domain. We now seek a holomorphic mapping $\Gamma$ such that $\Omega$ is the image of reference domain $\slab$ under this mapping. 

Next, we follow the construction from \cite[Section 2]{MR4577959}. Let $\Gamma=\Gamma_1+\I\Gamma_2:\slab\to\C$ be a holomorphic mapping satisfying ${\Gamma_2|_{\{z_2=-1\}}=0}$. Since $\Gamma_2$ is harmonic, it is uniquely determined in the space $H^2(\slab)$ by its trace $\Gamma_2(\cdot,\pm1)$. By $\Gamma_2(\cdot,-1)=0$, we have
\begin{equation}\label{E:HarmExt} 
\Gamma_2=\frac{\sinh{\left(|\partial_{x_1}|(x_2+1)\right)}}{\sinh{\left(2|\partial_{x_1}|\right)} } \Gamma_2(\cdot,1)\qquad\textrm{in } \slab,
\end{equation} 
and
\begin{equation} \label{E:DN}
\partial_{x_2}\Gamma_2=|\partial_{x_1}|\coth{\left(2|\partial_{x_1}|\right)} \Gamma_2 \qquad\text{on }\{x_2 = 1\}.
\end{equation}
Assume that $\Gamma_2(\cdot,1)$ is even in $x_1$. Then $\Gamma_2$ is also even in $x_1$. Since $\Gamma_1$ is harmonic conjugate to $\Gamma_2$, by Cauchy–Riemann equation, $\Gamma_1$ is odd in $x_1$. Then $\Gamma_1$ is uniquely determined by $\Gamma_2$. Therefore, we can obtain $\Gamma$ from trace $\Gamma_2(\cdot,1)$.

We can now write the fluid domain as
\begin{align*}
\Omega&=(id+\Gamma)\left(\{(x_1,x_2)\in\R^2\;|\;|x_2|< 1\}\right)\\
&=\left\{\left(x_1+\Gamma_1(x_1,x_2),x_2+\Gamma_2(x_1,x_2)\right)\in \R^2\;|\;|x_2|< 1\right\}.
\end{align*}
Pulling back the stream function gives
\begin{equation}\label{relation Phi to Psi}
\Phi=\Psi\circ(id+\Gamma):\{(x_1,x_2)\in\R^2\;|\;|x_2|< 1\}\to\R,
\end{equation}
hence problem \eqref{E:Steady-Euler}, \eqref{E:Bernoulli} and \eqref{eq:main} become
\begin{equation}\label{1836}
\left\{\begin{aligned}
\delta^2\Delta\Phi&=|1+\Gamma^\prime|^2\gamma(\Phi) &\qquad\text{in }\{(x_1,x_2)\in\R^2\;|\;|x_2|< 1\},\\
\Phi&=0 &\qquad\text{on }\{(x_1,x_2)\in\R^2\;|\;|x_2|= 1\},
\end{aligned}\right.
\end{equation}
and the Bernoulli condition \eqref{E:Bernoulli} becomes
\begin{equation}\label{eq:dynamic_Phi}
\frac{1}{2}\frac{(\partial_{x_2}\Phi)^2}{|1+\Gamma^\prime|^{2}}-\alpha^2\frac{Im(\Gamma^{\prime\prime}(1+\overline{\Gamma^\prime}))}{|1+\Gamma^\prime|^{3}}+g\Gamma_2=0\qquad\text{on }\{(x_1,x_2)\in\R^2\;|\;|x_2|= 1\},
\end{equation}
where $\Gamma^\prime=\partial_{x_1}\Gamma$.

After scaling, define 
\begin{equation}\label{relation phi and Phi}
\varphi=\Phi(\delta\,\cdot),
\end{equation}
then equation \eqref{1836} translates to
\begin{equation}\label{eq:varphi elliptic}
\left\{\begin{aligned}
&\Delta\varphi=|1+\Gamma^\prime(\delta\,\cdot)|^2\gamma(\varphi)\qquad&&\text{in }\slab_\delta,\\
&\varphi=0\qquad&&\textrm{on }\partial\slab_\delta, 
\end{aligned}\right.
\end{equation}
where the slab $\slab_\delta$ is defined as
\begin{equation}\label{def slab}
\slab_\delta=\left\{x\in\mathbb{R}^2\;|\;|x_2|<\tfrac{1}{\delta}\right\}. \end{equation}
The Bernoulli condition is transformed into
\begin{equation}\label{eq:dynamic_varphi}
\frac{1}{2\delta^{2}}\frac{\left(\partial_{x_2}\varphi(\cdot,\tfrac{1}{\delta})\right)^2}{|1+\Gamma^\prime(\delta\,\cdot,1)|^{2}}-\alpha^2\frac{Im (\Gamma^{\prime\prime}(\delta\,\cdot,1)(1+ \overline{\Gamma^\prime(\delta\,\cdot,1)}))}{|1+\Gamma^\prime(\delta\,\cdot,1)|^3}+g\Gamma_2(\delta\,\cdot,1)=0.
\end{equation}

\subsection{Approximate Solution} \label{perturbed problem section}We want to solve for $\varphi$ modeled on $U$. Although $U$ has exponential localization, it does not satisfy the homogeneous boundary conditions in \eqref{eq:varphi elliptic}. We therefore perform a boundary correction, that is, we subtract from $U$ a function with the same trace but with very small interior effect.

Since $U$ is exponentially localized and $\gamma'(0)=1$, away from the origin, the linearized operator $\gamma^\prime(U)-\Delta$ behaves like $1-\Delta$. Hence for $U\in H^s(\partial\slab_\delta)$, define the boundary correction $U_{\mathrm{bc}}$ of $U$ to be the unique solution of the following equation in the space $H^s(\slab_\delta)$:
\begin{equation}\label{definition bc}
\left\{\begin{aligned}
&(1-\Delta)U_{\mathrm{bc}}=0\qquad&&\textrm{in }\slab_{\delta},\\
&U_{\mathrm{bc}}=U\qquad&&\textrm{on }\partial\slab_\delta.
\end{aligned}\right.
\end{equation}
By elliptic regularity estimates (see \cite[Theorem 8.13]{MR1814364}), for any $s>0$, 
\begin{equation}\label{2154}
\|U_{\mathrm{bc}}\|_{H^{s}(\slab_\delta)}\lesssim\|U\|_{H^s (\partial\slab_\delta)}.
\end{equation}
A more detailed analysis of $U_{\mathrm{bc}}$ can be found in Section \ref{subsec:basic-properties-U}.

For $\tau\in(-1,1)$, let $U(\,\cdot\,;\tau):=U\left(\,\cdot-\tfrac{\tau}{\delta}e_2\right)$ with $U$ as in Assumption \ref{assumption:A}. For a fixed small positive constant $\varepsilon$, define
$$P_{\varepsilon}:=\{\boldsymbol{\tau}=(\tau_1,\tau_2,\dots,\tau_n)\;|\;\tau_i\in (-1,1), |\tau_i\pm1|>\varepsilon,|\tau_i-\tau_j|>\varepsilon,i,j=1,\dots,n,i\neq j\}.$$
Given $\boldsymbol{\tau}\in P_{\delta}$, let 
$$\sigma_i:=(-1)^{i+1},\qquad i=1,\dots,n,$$
we construct the approximate solution as
$$\omega_{\boldsymbol{\tau}}=\sum_{i=1}^n\sigma_i\left(U(\,\cdot\,;\tau_i)-U(\,\cdot\,;\tau_i)_{\mathrm{bc}}\right),$$
where $U(\,\cdot\,;\tau_i)_{\mathrm{bc}}$ is the solution of the equation \eqref{definition bc}. For simplicity of notation, we denote $U(\,\cdot\,;\tau_i)$ as $U_i$. Then we make the ansatz
\begin{equation}\label{definition u}
\varphi=\omega_{\boldsymbol{\tau}}+u.
\end{equation}
Substituting it into \eqref{eq:varphi elliptic}, by \(\Delta \sigma_iU_i =\sigma_i \gamma(U_i)\) and \( \Delta \sigma_i(U_i)_\mathrm{bc} = \sigma_i(U_i)_\mathrm{bc}\), $u$ satisfies the following equation:
\begin{equation}\label{eq:main_u}\\
\begin{aligned}
\Delta u=&\left|1+\Gamma^\prime(\delta\,\cdot\,)\right|^2 \gamma(\omega_{\boldsymbol{\tau}}+u)-\sum_{i=1}^n\sigma_i\gamma(U_i)+\sum_{i=1}^n\sigma_i(U_i)_\mathrm{bc}\\
=&\gamma^\prime\left(\sum_{i=1}^n\sigma_iU_i\right)u+\left|1+\Gamma^\prime(\delta \,\cdot\,)\right|^2\gamma(\omega_{\boldsymbol{\tau}}+u)\\
&-\sum_{i=1}^n\sigma_i\gamma(U_i)+\sum_{i=1}^n\sigma_i(U_i)_\mathrm{bc}-\gamma^\prime\left(\sum_{i=1}^n\sigma_iU_i\right)u\qquad\text{in }\slab_\delta.
\end{aligned}
\end{equation}
Since $\omega_{\boldsymbol{\tau}}|_{\partial\slab_\delta}=0$, the boundary value condition in \eqref{eq:varphi elliptic} reduces to
\begin{equation}\label{eq:kinematic_u}
u=0 \qquad\textrm{on }\partial\slab_\delta.
\end{equation}

We next turn to the Bernoulli condition \eqref{eq:dynamic_varphi}. Let $\Gamma_{\mathrm{s}}=\Gamma_2(\,\cdot\,,1)$, using the Cauchy-Riemann equations,
$$\Gamma^\prime=\partial_{x_1}\Gamma_1+\I \partial_{x_1} \Gamma_2=\partial_{x_2}\Gamma_2+\I \partial_{x_1} \Gamma_2.$$
Together with \eqref{definition u} and \eqref{E:DN}, we have
\begin{equation}\label{eq:dynamic_eta}
\frac{1}{2\delta^{2}}\frac{\left(\partial_{x_2}(\omega_{\boldsymbol{\tau}}+u)\left(\frac{\,\cdot\,}{\delta},\frac{1}{\delta} \right)\right)^2}{\left(1+m(\Diff)\Gamma_{\mathrm{s}}\right)^2+{\Gamma_{\mathrm{s}}^\prime}^2}-\alpha^2\frac{(1+m(\Diff)\Gamma_{\mathrm{s}})\Gamma_{\mathrm{s}}^{\prime\prime}-\Gamma_{\mathrm{s}}^\prime m(\Diff)\Gamma_{\mathrm{s}}^\prime}{\left((1+m(\Diff)\Gamma_{\mathrm{s}})^2+{\Gamma_{\mathrm{s}}^\prime}^2\right)^{3/2}}+g\Gamma_{\mathrm{s}}=0, 
\end{equation}
where $m(\Diff)=|\Diff|\coth{(2|\Diff|)}$. If $\|\Gamma_{\mathrm{s}}\|_{H^2}$ small, the leading term of the operator in \eqref{eq:dynamic_eta} acting on $\Gamma_{\mathrm{s}}$ is an invertible operator $g-\alpha^2 \partial_{x_1}^2:H^k(\R)\to H^{k-2}(\R),\,k\geq2$. Define operator $A(\Gamma_{\mathrm{s}})$ as   
$$A(\Gamma_{\mathrm{s}})=\left(g-
\alpha^2\frac{\left(1+m(\Diff)\Gamma_{\mathrm{s}}\right)(\Diff)^2-\Gamma_{\mathrm{s}}^\prime m(\Diff)\Diff}{\left((1+m(\Diff)\Gamma_{\mathrm{s}})^2+{\Gamma_{\mathrm{s}}^\prime}^2 \right)^{-\frac{3}{2}}}
\right)(g-\alpha^2\pd_{x_1}^2)^{-1}.$$
For $\|\Gamma_{\mathrm{s}}\|_{H^{2}}$ small, since $A(0)=id$ and
$$\|A(\Gamma_{\mathrm{s}})-id\|_{\mathcal L(H^{2})}\lesssim \|\Gamma_{\mathrm{s}}\|_{H^{k}},$$
then $A(\Gamma_{\mathrm{s}})$ is invertible. Hence we can write \eqref{eq:dynamic_eta} as
\begin{equation}\label{eq:dynamic_inverse}
\begin{aligned}
(g-\alpha^2(\Diff)^2)\Gamma_{\mathrm{s}}+\frac{1}{2\delta^{2}}A(\Gamma_{\mathrm{s}})^{-1}\left[\frac{\left(\partial_{x_2}(\omega_{\boldsymbol{\tau}}+u)\left(\frac{\,\cdot\,}{\delta},\frac{1}{\delta}\right)\right)^2}{(1+m(\Diff)\Gamma_{\mathrm{s}})^2+{\Gamma_{\mathrm{s}}^\prime}^2}\right]=0.
\end{aligned} 
\end{equation}

From the above analysis, if $u$, $\Gamma_{\mathrm{s}}$ and $\boldsymbol{\tau}$ satisfy 
\begin{subequations}\label{perturbed problem}
\begin{align}
\left(-\Delta+\gamma^\prime\left(\textstyle\sum_{i=1}^n\sigma_iU_i\right)\right)u + F(\boldsymbol{\tau},u,\Gamma_{\mathrm{s}})&=0 \qquad\textrm{in }\slab_\delta \label{perturbed u eq}\\
\left(g-\alpha^2(\Diff)^2\right)\Gamma_{\mathrm{s}}+G(\boldsymbol{\tau},u,\Gamma_{\mathrm{s}})&=0 \qquad\textrm{on }\mathbb{R}, \label{perturbed Gamma eq} 
\end{align}
\end{subequations}
where 
\begin{equation}\label{def F}
F(\boldsymbol{\tau},u,\Gamma_{\mathrm{s}}):=\left|1+\Gamma^\prime(\delta \,\cdot\,)\right|^2\gamma(\omega_{\boldsymbol{\tau}}+u)
-\sum_{i=1}^n\sigma_i\gamma(U_i)+\sum_{i=1}^n\sigma_i(U_i)_\mathrm{bc}-\gamma^\prime\left(\sum_{i=1}^n\sigma_iU_i\right)u
\end{equation}
and
\begin{equation}\label{def G}
G(\boldsymbol{\tau},u,\Gamma_{\mathrm{s}}):=  
\frac{1}{2\delta^{2}}A(\Gamma_{\mathrm{s}})^{-1}\left[\frac{\left(\partial_{x_2}(\omega_{\boldsymbol{\tau}}+u)\left(\frac{\,\cdot\,}{\delta},\frac{1}{\delta}\right)\right)^2}{(1+|\Diff|\coth{(2|\Diff|)}\Gamma_{\mathrm{s}})^2+{\Gamma_{\mathrm{s}}^\prime}^2}\right],  
\end{equation}
then $(\Psi,\zeta)$ obtained from \eqref{E:HarmExt}, \eqref{relation Phi to Psi}, \eqref{relation phi and Phi}, \eqref{definition u} will solve the stationary water wave problem\eqref{E:Steady-Euler}, \eqref{E:Bernoulli} and \eqref{eq:main}. 

\subsection{Basic properties of U and boundary correction}\label{subsec:basic-properties-U}
In this subsection, we collect the estimates for $U$ and its boundary correction that will be used in the sequel. These estimates are consequences of the exponential decay for $U$. We state them here. 

\begin{lemma}\label{prop:sign of Uy} 
Under Assumptions \ref{assumption:A} and \ref{assumption:B}, there exists $\lambda> 0$ such that 
\begin{equation}\label{eq:rate of decay}
\lim_{r\to\infty}(-1)^{k}r^{\frac{1}{2}}\e^{r}\partial_r^k U(r)=\lambda, \qquad\text{for all } 0\le k\le k_0+2.
\end{equation}
In particular, for radial functions, we have 
\begin{equation}\label{1859}
\partial_{x_2}U\eqsim- x_2 r^{-\frac{3}{2}}\e^{-r},\qquad r\gg 1.
\end{equation}
\end{lemma}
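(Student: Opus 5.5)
The plan is to reduce the radial equation to a perturbation of the modified Bessel equation and compare with $K_0$. In radial coordinates $U=U(r)$ satisfies
\[
U''+\tfrac1r U'=\gamma(U)=U+h(U),\qquad h(t):=\gamma(t)-t=O(t^2)
\]
(recall $\gamma(0)=0$, $\gamma'(0)=1$, $\gamma\in C^{k_0}$, and in fact $h(t)=O(|t|^3)$ by oddness). First I would show $U>0$ is eventually decreasing and decays like $\e^{-(1-\eta)r}$ for every $\eta>0$. Since $U\to0$ and $\gamma'(0)=1$, we have $\gamma(U)\ge(1-\eta)^2U$ for $r\ge R_\eta$. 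A maximum-principle comparison with the supersolution $C\e^{-(1-\eta)r}$ of $-\Delta+(1-\eta)^2$ on $\{r>R_\eta\}$ gives the rough bound. Consequently $h(U(r))/U(r)=O(\e^{-(1-\eta)r})$ is integrable at infinity.

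Next I would set $U=r^{-1/2}w$, so that
\[
w''=\Bigl(1-\tfrac{1}{4r^2}+q(r)\Bigr)w,\qquad q:=h(U)/U\in L^1(R,\infty).
\]
The potential perturbation $-\tfrac1{4r^2}+q$ is integrable. By the standard Levinson/Liouville–Green asymptotics (a Volterra integral equation around $\e^{\pm r}$), $w$ is a combination $a\,w_-+b\,w_+$ with $w_\pm\sim\e^{\pm r}(1+o(1))$. Decay forces $b=0$, and $a\ne0$ since $U\not\equiv0$; positivity gives $a>0$. This yields $r^{1/2}\e^rU\to\lambda:=a>0$, which is the case $k=0$.

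For the derivatives, the same Volterra representation gives $w'=-a\e^{-r}(1+o(1))$, hence $U'\sim -\lambda r^{-1/2}\e^{-r}$. Higher derivatives follow by induction using the ODE itself: $U''=U-\tfrac1rU'+h(U)$, and differentiating expresses $\partial_r^kU$ through lower derivatives. Each lower-order term is either $\partial_r^{k-2}U$ (the leading term, with the sign $(-1)^{k-2}=(-1)^k$) or is smaller by a factor $r^{-1}$ or $\e^{-r}$. Here $h\in C^{k_0}$ and $U\in C^{k_0+2}$ allow this up to $k=k_0+2$.

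Finally, \eqref{1859} follows from $\partial_{x_2}U=U'(r)\,x_2/r$ together with the case $k=1$.

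The main obstacle is the asymptotic step: establishing the Levinson-type representation rigorously, with $o(1)$ errors, for both $w$ and $w'$. I would do this by writing
\[
w(r)=a\e^{-r}+\int_r^\infty\sinh(s-r)\,p(s)w(s)\,ds,\qquad p:=q-\tfrac1{4s^2},
\]
and using the $L^1$ bound on $p$ to close a contraction in the weighted norm $\sup\e^{r}|w|$.
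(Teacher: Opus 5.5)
Your argument is correct. The paper gives no proof of its own here: it refers to \cite[Proposition 3.1]{MR4577959}. So your proposal is a self-contained replacement for that citation rather than a variant of an argument in the text.

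Your route is the standard ODE one:
\begin{itemize}
\item a rough bound $U\le C\e^{-(1-\eta)r}$ by comparison with a supersolution of $-\Delta+(1-\eta)^2$, which makes $q=h(U)/U$ integrable;
\item the Liouville substitution $U=r^{-1/2}w$;
\item a Volterra fixed point around $\e^{-r}$, giving $\e^r w\to a$ and $\e^r w'\to -a$;
\item the bootstrap $U''=U-\tfrac1r U'+h(U)$ from $k-2$ to $k$, which needs $h\in C^{k-2}$. This is exactly why the range stops at $k_0+2$.
\end{itemize}

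Two small points should be tightened. First, the dichotomy $w=a\,w_-+b\,w_+$ needs a second, growing solution. Reduction of order supplies it: $w_+=w_-\int_R^r w_-^{-2}\,ds\sim\tfrac12\e^{r}$, available once $w_->0$ on $[R,\infty)$. Second, your first-step claim that $U$ is eventually decreasing is neither proved there nor needed; it follows afterwards from the case $k=1$.

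What the citation buys is brevity. What your argument buys is transparency about hypotheses. It uses only $U>0$, $U\to0$, $\gamma(0)=0$, $\gamma'(0)=1$ and the $C^{k_0}$ regularity of $\gamma$. The nondegeneracy assumption (B), the oddness of $\gamma$ and the concavity condition play no role. Also, running your argument with $h\equiv 0$ gives the large-$r$ asymptotics of $G_0=\tfrac{1}{2\pi}K_0$ that the paper asserts in \eqref{012} ``by an argument similar to'' this lemma.
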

The proof can be found in \cite[Proposition 3.1]{MR4577959}. Since $U_i$ is obtained by translating $U$, the estimates for $U_i$ and its derivatives also follow from the above lemma. Next, we consider the boundary correction $(U_i)_{\mathrm{bc}}$. From \eqref{definition bc}, it satisfies
\begin{equation}\label{2142}
\left\{\begin{aligned}
&(1-\Delta)(U_i)_\mathrm{bc}=0\qquad&&\textrm{in }\slab_{\delta},\\
&(U_i)_\mathrm{bc}=U_i\qquad&&\textrm{on }\partial\slab_\delta.
\end{aligned}\right.
\end{equation}
To obtain $(U_i)_{\mathbf{bc}}$, we need the Green's function corresponding to the Dirichlet problem for the operator $1-\Delta$ in $\slab_{\delta}$.

We first consider the Green's function in $\R^2$. Let $G_0$ be the fundamental solution satisfying
$$-\Delta G_0+G_0=\delta_0\qquad\text{in }\R^2.$$
Taking the Fourier transform, we obtain
$$G_0(x)=\frac{1}{(2\pi)^2}\int_{\R^2}\frac{e^{i\xi\cdot x}}{1+|\xi|^2}\,d\xi.$$
Since $G_0$ is radially symmetric, we write $G_0(x)=G_0(|x|)=G_0(r)$. By an argument similar to Lemma \ref{prop:sign of Uy} and \cite{MR3837350},
\begin{equation}\label{012}
\begin{aligned}
G_0(r)&=O(|\log r|)&&\quad\text{as }r\to0,\\
\partial_r^kG_0(r)&=O(r^{-k})&&\quad\text{as }r\to0,k\geq 1,\\
\partial_r^kG_0(r)&=O(r^{-1/2}e^{-r})&&\quad\text{as }r\to\infty,k\geq 0.
\end{aligned}
\end{equation}

Next, we construct the Green's function in the strip $\slab_\delta$ by the method of images. Recall that the boundary of
$\slab_\delta$ consists of two horizontal lines. In order to enforce the Dirichlet boundary condition, we reflect the singularity with respect to each boundary and alternate the sign of the reflected terms.

More precisely, define
\begin{equation}\label{2114}
\begin{aligned}
\widetilde G(x,y):=\sum_{m\in\mathbb Z}\Bigg[&G_0\left(x_1-y_1,
x_2-y_2+\frac{4m}{\delta}\right)\\
&-G_0\left(x_1-y_1,x_2+y_2-\frac{2}{\delta}+\frac{4m}{\delta}
\right)\Bigg].
\end{aligned}
\end{equation}
The alternating signs guarantee the cancellation of the Green function on the two boundary components of $\partial\slab_\delta$. Indeed, on upper boundary $x_2=1/\delta$, using the symmetry of $G_0(r)$, the two sums cancel. Similarly, on lower boundary $x_2=-1/\delta$, the two sums cancel after using the symmetry of \(G_0\) and changing the index $m\mapsto m+1$ of the latter term. Moreover, only the $m=0$ term in \eqref{2114} has a singularity at $x=y$, while all other image terms are smooth in $\slab_\delta$. Hence, $\widetilde G(x,y)$ satisfies
$$\begin{cases}
-\Delta \widetilde G(x,y)+\widetilde G(x,y)=\delta_y(x),&x\in\slab_\delta,\\
\widetilde G(x,y)=0,&x\in\partial\slab_\delta.
\end{cases}$$

We next define the Poisson kernel associated with the Green's function $\widetilde G(x,y)$ by
$$P(x,y):=-\partial_{\nu_y}\widetilde G(x,y),\qquad y\in\partial\slab_\delta,$$
where $\nu_y$ denotes the unit outward normal to $\partial\slab_\delta$. By the Poisson integral formula, we obtain
\begin{align}
&(U_i)_{\mathrm{bc}}(x)=\int_{\partial\slab_\delta}P(x,y)
U_i(y)\,dS\label{1844}\\
&=-\int_{\R}\partial_{y_2}\widetilde G\left(x,\left(y_1,\frac{1}{\delta}\right)\right)U_i\left(y_1,\frac{1}{\delta}\right)\,dy_1+\int_{\R}\partial_{y_2}\widetilde G\left(x,\left(y_1,-\frac{1}{\delta}\right)\right)U_i\left(y_1,-\frac{1}{\delta}\right)\,dy_1.\notag
\end{align}
We proceed to derive some estimates for $(U_i)_{\mathrm{bc}}$ that will be used later.

\begin{corollary}\label{C:Ubc}
For any $|\tau_i|,\,|\tau_j|< 1-\varepsilon$, $U(\,\cdot\,;\tau_i)_\mathrm{bc}$ satisfy
\begin{align}
\label{2023}\left\|U(\,\cdot\,;\tau_i)_\mathrm{bc}\right\|_{H^{k_0+2} (\slab_\delta)}&\lesssim\e^{-\frac{1-|\tau_i|}\delta},\\
\label{2024}\left|\left(U(\,\cdot\,;\tau_i)_\mathrm{bc},U(\,\cdot\,;\tau_j)\right)_{L^2(\slab_\delta)}\right|&\lesssim\delta^{-\frac{5}{2}}\e^{-\frac{2-|\tau_i|-|\tau_j|}{\delta}},\\
\label{2025}\left|\left(\partial_{x_2}U(\,\cdot\,;\tau_i)_\mathrm{bc},U(\,\cdot\,;\tau_j)\right)_{L^2(\slab_\delta)}\right|&\lesssim\delta^{-\frac{5}{2}}\e^{-\frac{2-|\tau_i|-|\tau_j|}{\delta}}.
\end{align}
Here and in what follows, $\left(f,g\right)_{L^2}$ denotes the $L^2$ inner product of $f$ and $g$.
\end{corollary}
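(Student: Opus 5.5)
We argue from the elliptic estimate \eqref{2154}, the Poisson representation \eqref{1844} and the decay of Lemma~\ref{prop:sign of Uy}. The trace of $U_i=U(\cdot;\tau_i)$ on $\partial\slab_\delta$ is supported on the two lines $y_2=\pm1/\delta$. These lie at vertical distances $d_\pm=(1\mp\tau_i)/\delta$ from the spike center, and both are at least $(1-|\tau_i|)/\delta\ge \varepsilon/\delta$.

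\emph{Proof of \eqref{2023}.} By \eqref{2154} it suffices to estimate $\|U_i(\cdot,\pm1/\delta)\|_{H^{k_0+2}(\R)}$, with a fractional trace norm handled by the integer order $k_0+2$ of tangential derivatives. Lemma~\ref{prop:sign of Uy} gives $|\partial^\beta U(y)|\lesssim \langle y\rangle^{-1/2}e^{-|y|}$ for $|\beta|\le k_0+2$, and on the line $|y|=\sqrt{y_1^2+d^2}\ge d+\frac{y_1^2}{2\sqrt{y_1^2+d^2}}$. We therefore get
\[
\int_\R |\partial_{y_1}^k U(y_1,d)|^2\,dy_1\lesssim d^{-1}e^{-2d}\int_\R e^{-c\,y_1^2/(d+|y_1|)}\,dy_1\lesssim e^{-2d}.
\]
The Gaussian-type integral is $O(\sqrt d)$ for $|y_1|\lesssim d$ and $O(1)$ in the tails, which more than compensates the factor $d^{-1}$. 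Taking the worse of the two boundary lines, $d=(1-|\tau_i|)/\delta$, yields \eqref{2023}.

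\emph{Proof of \eqref{2024}.} Cauchy--Schwarz against \eqref{2023} only gives $e^{-(1-|\tau_i|)/\delta}$, which is far too weak. Instead we use \eqref{1844} and Fubini:
\[
\left(U_{i,\mathrm{bc}},U_j\right)_{L^2}=\int_{\partial\slab_\delta}U_i(y)\,W_j(y)\,dS_y,\qquad W_j(y):=\int_{\slab_\delta}P(x,y)\,U_j(x)\,dx.
\]
Here $W_j(y)=-\partial_{\nu}w_j(y)$, where $w_j$ solves $(1-\Delta)w_j=U_j$ in $\slab_\delta$ with $w_j=0$ on $\partial\slab_\delta$. Since $U_j$ decays like $r^{-1/2}e^{-r}$ and $G_0$ has the same rate, $w_j$ is essentially $G_0*U_j$, which decays like $e^{-r}$ up to polynomial factors in $r$; resonance of equal rates costs at most a factor $r$ or $r^{1/2}$. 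We bound the image terms in \eqref{2114} by the leading one, since they sit at distance at least $2/\delta$ further away. This gives $|W_j(y)|\lesssim \langle y-\tfrac{\tau_j}{\delta}e_2\rangle^{a}e^{-|y-\tau_j e_2/\delta|}$ on the boundary for some fixed $a\le 1$.

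We then integrate the product with $|U_i|\lesssim r^{-1/2}e^{-r}$ along the boundary lines. The exponents add to $(2-|\tau_i|-|\tau_j|)/\delta$ at worst, namely when both spikes are closest to the same line. If they are closest to different lines, the bound is even smaller. Collecting the polynomial prefactors gives at most $\delta^{-5/2}$, and the $y_1$-integral is again Laplace-type.

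\emph{Proof of \eqref{2025}.} This is identical. Since $\partial_{x_2}$ commutes with $1-\Delta$ and is tangential to the structure of \eqref{1844}, we differentiate the kernel $P$ in $x_2$, or move the derivative onto $U_j$ after integrating by parts. The $x_2$-derivatives of $G_0$ obey the same bounds \eqref{012}, so the estimate is unchanged.

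The main obstacle is the decay bound for $W_j$ (equivalently for $\partial_\nu w_j$) near the boundary. The two exponential rates coincide, because $\gamma'(0)=1$ matches the mass in $1-\Delta$, so the convolution produces extra polynomial growth that must be tracked. This is exactly where the prefactor $\delta^{-5/2}$ comes from. To get it, we split the convolution integral into regions near $x$ and near the center of $U_j$, and apply the asymptotics \eqref{eq:rate of decay} and \eqref{012}.
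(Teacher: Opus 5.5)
Your arguments for \eqref{2023} and \eqref{2024} follow the paper. For \eqref{2023} that means the elliptic estimate \eqref{2154} plus a Laplace-type line integral. For \eqref{2024} it means the Poisson representation \eqref{1844}, Fubini, near/far kernel bounds from \eqref{012} and the triangle inequality; your $W_j=-\partial_\nu w_j$ bookkeeping only sharpens the polynomial prefactor, which the paper bounds crudely by an area factor. One small correction: the image terms in \eqref{2114} are not all $2/\delta$ farther away. For $y\in\partial\slab_\delta$ the $m=0$ reflected term is at exactly the same distance as the direct one and simply doubles the kernel. What is true, and suffices, is that every image point is at least as far from $x$ as $y$, with geometric decay in $|m|$.

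\textbf{The gap is in \eqref{2025}}, which is not ``identical''. The direction $\partial_{x_2}$ is normal, not tangential, to $\partial\slab_\delta$, so it does not commute with the boundary correction. The function $\partial_{x_2}(U_i)_{\mathrm{bc}}$ solves $(1-\Delta)v=0$, but its trace is the normal derivative of $(U_i)_{\mathrm{bc}}$, not $\partial_{x_2}U_i$.

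Your first route, differentiating $P(x,y)$ in $x_2$, gives a kernel behaving like $\partial^2G_0(x-y)$. By \eqref{012} this is $O(|x-y|^{-2})$ as $x\to y\in\partial\slab_\delta$, which is not integrable over a half-disk in $\R^2$. So the near-field part of the estimate (the region $|x-y|\le\mu$) diverges in absolute value. The bounds in \eqref{012} worsen with each derivative, so ``the same bounds'' fail exactly where they are needed. The integral exists only through angular cancellation, which you would have to exploit explicitly.

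Your second route, integrating by parts, is the paper's route and is the right one. However, integrating by parts in $x_2$ over $\slab_\delta$ produces boundary terms $\pm\int_\R (U_i)_{\mathrm{bc}}\,U_j\,dx_1$ on $x_2=\pm1/\delta$, and these do not vanish. To finish:
\begin{itemize}
\item Use the boundary condition in \eqref{2142} to replace $(U_i)_{\mathrm{bc}}$ by $U_i$ in these boundary terms.
\item Bound $\int_\R U_iU_j(x_1,\pm\tfrac1\delta)\,dx_1$ by the same Laplace-type computation as in \eqref{2023}, giving $\delta^{1/2}\e^{-(2-|\tau_i|-|\tau_j|)/\delta}$.
\item The remaining interior term $-\bigl((U_i)_{\mathrm{bc}},\partial_{x_2}U_j\bigr)_{L^2(\slab_\delta)}$ is \eqref{2024} with $U_j$ replaced by $\partial_{x_2}U_j$, which has the same decay by Lemma~\ref{prop:sign of Uy}. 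Your proof of \eqref{2024} used only the pointwise decay of $U_j$, so it applies verbatim.
\end{itemize}
With these additions, \eqref{2025} follows.
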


\begin{proof}
By \eqref{2154} and Lemma \ref{prop:sign of Uy}, a direct estimate yields
\begin{align*}
&\left| U(\,\cdot\,;\tau_i)_\mathrm{bc}\right|^2_{H^{k_0+2} (\slab_\delta)}\lesssim\left|U(\,\cdot\,;\tau_i)\right|^2_{H^{k_0+2} (\partial\slab_\delta)}\\
&\lesssim\displaystyle\int_{\mathbb R}\left|U\left(x,\frac{1}{\delta}-\frac{\tau_i}{\delta}\right)\right|^2\,dx+\displaystyle\int_{\mathbb R}\left|U\left(x,-\frac{1}{\delta}-\frac{\tau_i}{\delta}\right)\right|^2\,dx\\
&\lesssim\delta\displaystyle\int_{\left\{|x|\leq\frac{1-\tau_i}{\delta}\right\}}\e^{-2\sqrt{x^2+(\frac{1}{\delta}-\frac{\tau_i}{\delta})^2}}\,dx+\delta\displaystyle\int_{\left\{|x|\leq\frac{1+\tau_i}{\delta}\right\}}\e^{-2\sqrt{x^2+(\frac{1}{\delta}+\frac{\tau_i}{\delta})^2}}\,dx\\
&\quad\,+\delta\displaystyle\int_{\left\{|x|>\frac{1-\tau_i}{\delta}\right\}}\e^{-2\sqrt{x^2+(\frac{1}{\delta}-\frac{\tau_i}{\delta})^2}}\,dx+\delta\displaystyle\int_{\left\{|x|>\frac{1+\tau_i}{\delta}\right\}}\e^{-2\sqrt{x^2+(\frac{1}{\delta}+\frac{\tau_i}{\delta})^2}}\,dx\\
&\lesssim\delta\displaystyle\int_{\left\{|x|\leq\frac{1-\tau_i}{\delta}\right\}}\e^{-\frac{2-2\tau_i}{\delta}}\,dx+\delta\displaystyle\int_{\left\{|x|\leq\frac{1+\tau_i}{\delta}\right\}}\e^{-\frac{2+2\tau_i}{\delta}}\,dx+\delta\displaystyle\int_{\frac{1-\tau_i}{\delta}}^{\infty}\e^{-2x}\,dx+\delta\displaystyle\int_{\frac{1+\tau_i}{\delta}}^{\infty}\e^{-2x}\,dx\\
&\lesssim\e^{-\frac{2-2\tau_i}{\delta}}+\e^{-\frac{2+2\tau_i}{\delta}}\lesssim\e^{-\frac{2-2|\tau_i|}{\delta}}.
\end{align*}
This implies \eqref{2023}. We next consider \eqref{2024}. Using \eqref{1844},
$$\left(U(\,\cdot\,;\tau_i)_\mathrm{bc},U(\,\cdot\,;\tau_j)\right)_{L^2(\slab_\delta)}=\displaystyle\int_{\slab_\delta}\displaystyle\int_{\partial\slab_\delta}P(x,y)U(y\,;\tau_i)U(x\,;\tau_j)\,dy\,dx.$$
By Fubini's theorem, \eqref{012} and Lemma \ref{prop:sign of Uy},
\begin{align*}
&\left|\displaystyle\int_{\slab_\delta}\displaystyle\int_{\partial\slab_\delta}P(x,y)U(y\,;\tau_i)U(x\,;\tau_j)\,dy\,dx\right|\lesssim\displaystyle\int_{\partial\slab_\delta}|U(y\,;\tau_i)|\,dy\displaystyle\int_{\slab_\delta}|P(x,y)||U(x\,;\tau_j)|\,dx\\
&\lesssim\displaystyle\int_{\partial\slab_\delta}|U(y\,;\tau_i)|\,dy\left(\displaystyle\int_{\{|x-y|\leq\mu\}\cap\slab_\delta}+\displaystyle\int_{\{|x-y|>\mu\}\cap\slab_\delta}\right)|P(x,y)||U(x\,;\tau_j)|\,dx\\
&\lesssim\displaystyle\int_{\partial\slab_\delta}|U(y\,;\tau_i)|\,dy\displaystyle\int_{\{|x-y|\leq\mu\}\cap\slab_\delta}\left|x-y\right|^{-1}\left|x-\tfrac{\tau_j}{\delta}e_2\right|^{-\frac{1}{2}}\e^{-\left|x-\frac{\tau_j}{\delta}e_2\right|}\,dx\\
&\quad\,+\displaystyle\int_{\partial\slab_\delta}|U(y\,;\tau_i)|\,dy\displaystyle\int_{\{|x-y|>\mu\}\cap\slab_\delta}\left(1+\left|x-y\right|\right)^{-\frac{1}{2}}\left(1+\left|x-\tfrac{\tau_j}{\delta}e_2\right|\right)^{-\frac{1}{2}}\e^{-|x-y|-\left|x-\frac{\tau_j}{\delta}e_2\right|}\,dx\\
&=:I_1+I_2,
\end{align*}
where $\mu$ is a fixed small constant. 

For $I_1$, by Lemma \ref{prop:sign of Uy}, it follows that 
\begin{align*}
&\displaystyle\int_{\partial\slab_\delta}|U(y\,;\tau_i)|\,dy\displaystyle\int_{\{|x-y|\leq\mu\}\cap\slab_\delta}\left|x-y\right|^{-1}\left|x-\tfrac{\tau_j}{\delta}e_2\right|^{-\frac{1}{2}}\e^{-\left|x-\frac{\tau_j}{\delta}e_2\right|}\,dx\\
&\lesssim\displaystyle\int_{\partial\slab_\delta}\left|y-\tfrac{\tau_i}{\delta}e_2\right|^{-\frac{1}{2}}\e^{-\left|y-\frac{\tau_i}{\delta}e_2\right|}\left(\left|y-\tfrac{\tau_j}{\delta}e_2\right|-\mu\right)^{-\frac{1}{2}}\e^{-\left|y-\frac{\tau_j}{\delta}e_2\right|+\mu}\,dy\\
&\lesssim\delta\displaystyle\int_{\partial\slab_\delta}\e^{-\left|y-\frac{\tau_i}{\delta}e_2\right|}\e^{-\left|y-\frac{\tau_j}{\delta}e_2\right|}\,dy\\
&=\delta\displaystyle\int_{\R}\e^{-\sqrt{y_1^2+(\frac{1}{\delta}-\frac{\tau_i}{\delta})^2}}\e^{-\sqrt{y_1^2+(\frac{1}{\delta}-\frac{\tau_j}{\delta})^2}}\,dy_1+\delta\displaystyle\int_{\R}\e^{-\sqrt{y_1^2+(\frac{1}{\delta}+\frac{\tau_i}{\delta})^2}}\e^{-\sqrt{y_1^2+(\frac{1}{\delta}+\frac{\tau_j}{\delta})^2}}\,dy_1\\
&\lesssim\delta\displaystyle\int_{-\frac{2}{\delta}}^{\frac{2}{\delta}}\e^{-\sqrt{y_1^2+(\frac{1}{\delta}-\frac{\tau_i}{\delta})^2}}\e^{-\sqrt{y_1^2+(\frac{1}{\delta}-\frac{\tau_j}{\delta})^2}}\,dy_1+\delta\displaystyle\int_{-\frac{2}{\delta}}^{\frac{2}{\delta}}\e^{-\sqrt{y_1^2+(\frac{1}{\delta}+\frac{\tau_i}{\delta})^2}}\e^{-\sqrt{y_1^2+(\frac{1}{\delta}+\frac{\tau_j}{\delta})^2}}\,dy_1\\
&\quad\,+\delta\displaystyle\int_{\frac{2}{\delta}}^{\infty}\e^{-2y_1}\,dy_1\\
&\lesssim\e^{-\frac{2-\tau_i-\tau_j}{\delta}}+\e^{-\frac{2+\tau_i+\tau_j}{\delta}}+\delta\e^{-\frac{4}{\delta}}\lesssim\e^{-\frac{2-|\tau_i|-|\tau_j|}{\delta}}.
\end{align*}
For $I_2$, using $|x-y|+\left|x-\frac{\tau_j}{\delta}e_2\right|\geq|y-\frac{\tau_j}{\delta}e_2|$, we have
\begin{align*}
&\displaystyle\int_{\partial\slab_\delta}|U(y\,;\tau_i)|\,dy\displaystyle\int_{\{|x-y|>\mu\}\cap\slab_\delta}\left(1+\left|x-y\right|\right)^{-\frac{1}{2}}\left(1+\left|x-\tfrac{\tau_j}{\delta}e_2\right|\right)^{-\frac{1}{2}}\e^{-|x-y|-\left|x-\frac{\tau_j}{\delta}e_2\right|}\,dx\\
&\lesssim\displaystyle\int_{\partial\slab_\delta}|U(y\,;\tau_i)|\,dy\displaystyle\int_{-\frac{4}{\delta}}^{\frac{4}{\delta}}\displaystyle\int_{-\frac{1}{\delta}}^{\frac{1}{\delta}}\e^{-\left|y-\frac{\tau_j}{\delta}e_2\right|}\,dx_2\,dx_1\\
&\quad\,+\displaystyle\int_{\partial\slab_\delta}|U(y\,;\tau_i)|\,dy\displaystyle\int_{\left\{|x_1|>\frac{4}{\delta}\right\}\cap\slab_\delta}\e^{-\frac{4}{\delta}-\left|x-y\right|}\,dx\\
&\lesssim\delta^{-\frac{3}{2}}\displaystyle\int_{\R}\e^{-\sqrt{y_1^2+(\frac{1}{\delta}-\frac{\tau_i}{\delta})^2}}\e^{-\sqrt{y_1^2+(\frac{1}{\delta}-\frac{\tau_j}{\delta})^2}}\,dy_1+\delta^{-\frac{3}{2}}\displaystyle\int_{\R}\e^{-\sqrt{y_1^2+(\frac{1}{\delta}+\frac{\tau_i}{\delta})^2}}\e^{-\sqrt{y_1^2+(\frac{1}{\delta}+\frac{\tau_j}{\delta})^2}}\,dy_1\\
&\quad\,+\delta^{\frac{1}{2}}\e^{-\frac{4}{\delta}}\displaystyle\int_{\R}\e^{-\sqrt{y_1^2+(\frac{1}{\delta}-\frac{\tau_i}{\delta})^2}}\,dy_1+\delta^{\frac{1}{2}}\e^{-\frac{4}{\delta}}\displaystyle\int_{\R}\e^{-\sqrt{y_1^2+(\frac{1}{\delta}+\frac{\tau_i}{\delta})^2}}\,dy_1\\
&\lesssim\delta^{-\frac{5}{2}}\e^{-\frac{2-\tau_i-\tau_j}{\delta}}+\delta^{-\frac{5}{2}}\e^{-\frac{2+\tau_i+\tau_j}{\delta}}+\delta^{-\frac{1}{2}}\e^{-\frac{5-\tau_i}{\delta}}+\delta^{-\frac{1}{2}}\e^{-\frac{5+\tau_i}{\delta}}\lesssim\delta^{-\frac{5}{2}}\e^{-\frac{2-|\tau_i|-|\tau_j|}{\delta}}.
\end{align*}
Summarizing the estimates for $I_1$ and $I_2$ gives \eqref{2024}. 

Finally, consider \eqref{2025}. Integration by parts gives
\begin{align*}
&\left|\left(\partial_{x_2}U(\,\cdot\,;\tau_i)_\mathrm{bc},U(\,\cdot\,;\tau_j)\right)_{L^2(\slab_\delta)}\right|=\left|\displaystyle\int_{\slab_\delta}\partial_{x_2}U(x\,;\tau_i)_{\mathrm{bc}}U(x\,;\tau_j)\,dx\right|\\
&\leq\left|\int_{\R}(U_i)_{\mathrm{bc}}\left(x_1,\frac{1}{\delta}\right)U_j\left(x_1,\frac{1}\delta\right)\,dx_1\right|+\left|\int_{\R}(U_i)_{\mathrm{bc}}\left(x_1,-\frac{1}{\delta}\right)U_j\left(x_1,-\frac{1}\delta\right)\,dx_1\right|\\
&\quad\,+\left|\displaystyle\int_{\slab_\delta}(U_i)_{\mathrm{bc}}(x)\partial_{x_2}U_j(x)\,dx\right|\\
&=\left|\int_{\R}U_i\left(x_1,\frac{1}{\delta}\right)U_j\left(x_1,\frac{1}\delta\right)\,dx_1\right|+\left|\int_{\R}U_i\left(x_1,-\frac{1}{\delta}\right)U_j\left(x_1,-\frac{1}\delta\right)\,dx_1\right|\\
&\quad\,+\left|\displaystyle\int_{\slab_\delta}(U_i)_{\mathrm{bc}}(x)\partial_{x_2}U_j(x)\,dx\right|\\
&=:J_1+J_2+J_3,
\end{align*}
where the boundary condition in \eqref{2142} is used.

For $J_1$, when $|x_1|\leq\frac 4\delta$, the following inequality holds
\begin{align*}
\sqrt{x_1^2+\left(\frac {1-\tau_i}\delta\right)^2}-\left|\frac {1-\tau_i}\delta\right|=\frac{x_1^2}{\sqrt{x_1^2+\left(\frac {1-\tau_i}\delta\right)^2}+\left|\frac {1-\tau_i}\delta\right|}\geq\frac{\delta x_1^2}{2(\sqrt{5}+1)}.
\end{align*}
Therefore, by Lemma \ref{prop:sign of Uy},
\begin{align*}
J_1&\lesssim\displaystyle\int_{-\frac 4\delta}^{\frac{4}{\delta}}\left(x_1^2+\left(\tfrac{1-\tau_i}\delta\right)^2\right)^{-\frac{1}{4}}\left(x_1^2+\left(\tfrac{1-\tau_j}\delta\right)^2\right)^{-\frac{1}{4}}\e^{-\left(\sqrt{x_1^2+\left(\frac {1-\tau_i}\delta\right)^2}+\sqrt{x_1^2+\left(\frac {1-\tau_j}\delta\right)^2}\right)}\,dx_1\\
&\quad\,+\displaystyle\int_{\frac{4}{\delta}}^{\infty}\left(x_1^2+\left(\tfrac {1-\tau_i}\delta\right)^2\right)^{-\frac{1}{4}} \left(x_1^2+\left(\tfrac {1-\tau_j}\delta\right)^2\right)^{-\frac{1}{4}} \e^{-2x_1}\,dx_1\\
&\lesssim\delta\displaystyle\int_{-\frac 4\delta}^{\frac{4}{\delta}}\e^{-\left(\left|\frac {1-\tau_i}\delta\right|+\left|\frac {1-\tau_j}\delta\right|+\frac{\delta x_1^2}{\sqrt{5}+1}\right)}\,dx+\delta\e^{-\frac{8}{\delta}}\lesssim\delta^{\frac{1}{2}}\e^{-\frac{2-|\tau_i|-|\tau_j|}{\delta}}.
\end{align*}
Similarly, 
$$J_2\lesssim\delta^{\frac{1}{2}}\e^{-\frac{2-|\tau_i|-|\tau_j|}{\delta}}.$$
For $J_3$, by Lemma \ref{prop:sign of Uy}, the decay estimates for $\partial_{x_2}U_j$ and $U_j$ are the same, then we have
$$J_3\lesssim\left|\left(U(\,\cdot\,;\tau_i)_\mathrm{bc},U(\,\cdot\,;\tau_j)\right)_{L^2(\slab_\delta)}\right|\lesssim\delta^{-\frac{5}{2}}\e^{-\frac{2-|\tau_i|-|\tau_j|}{\delta}}.$$
It follows that \eqref{2025} holds.
\end{proof}
At the end of this section, we consider the construction of $(U_i)_{\mathrm{bc}}$. Since $U_i$ satisfies $\Delta U_i=\gamma(U_i)$ and $\gamma(0)=0$, $\gamma'(0)=1$, then for $U_i$ small,
$$(1-\Delta)U_i=U_i-\gamma(U_i)=O(U_i^2).$$
Thus, away from the center of the $i$-th spike where $U_i$ is exponentially small, satisfies the following homogeneous linear equation to leading order:
$$(1-\Delta)U_i=0.$$
This observation suggests that the leading-order behavior of $(U_i)_{\mathrm{bc}}$ can be described by the corresponding linear boundary-value problem. In particular, the boundary data of $U_i$ are propagated into the strip through the Poisson kernel, and the leading-order contribution is given by the reflections of $U_i$ across the upper and lower boundaries.

Denote
\begin{equation}\label{2031}
(U_i)_{\mathrm{bc}}(x)=U_i\left(x_1,\tfrac2\delta-x_2\right)+U_i\left(x_1,-\tfrac2\delta-x_2\right)+\mathcal{R}_i(x),
\end{equation}
where $\mathcal{R}_i(x)$ satisfies
$$\begin{cases} (1-\Delta)\mathcal{R}_i(x)=-(1-\Delta)U_i\left(x_1,\frac2\delta-x_2\right)-(1-\Delta)U_i\left(x_1,-\frac2\delta-x_2\right),\quad x\in\slab_\delta,\\ \mathcal{R}_i\left(x_1,\pm\frac1\delta\right)=U_i\left(x_1,\mp\frac3\delta\right). 
\end{cases}$$
We now consider the estimates for $\mathcal{R}_i$. By the Green's function representation above
\begin{align*}
\mathcal{R}_i(x)=&-\int_{\slab_\delta}\widetilde G(x,y)
\left((1-\Delta)U_i\left(y_1,\tfrac2\delta-y_2\right)+(1-\Delta)U_i\left(y_1,-\tfrac2\delta-y_2\right)\right)\,dy\\
&+\int_{\R}\partial_{y_2}\widetilde G\left(x,\left(y_1,\tfrac{1}{\delta}\right)\right)U_i\left(y_1,-\tfrac{3}{\delta}\right)\,dy_1-\int_{\R}\partial_{y_2}\widetilde G\left(x,\left(y_1,-\tfrac{1}{\delta}\right)\right)U_i\left(y_1,\tfrac{3}{\delta}\right)\,dy_1.
\end{align*}
Applying the elliptic regularity estimate used in \eqref{2154} and the argument in Corollary \ref{C:Ubc} for $\left\|U(\,\cdot\,;\tau_i)\right\|_{H^{k_0+2} (\partial\slab_\delta)}$, we have
\begin{align*}
\left\|{R}_i\right\|_{H^{k_0+2}(\slab_\delta)}&\lesssim\left\|(1-\Delta)U_i\left(\,\cdot\,,\tfrac2\delta-\,\cdot\,\right)\right\|_{H^{k_0} (\slab_\delta)}+\left\|(1-\Delta)U_i\left(\,\cdot\,,-\tfrac2\delta-\,\cdot\,\right)\right\|_{H^{k_0} (\slab_\delta)}\\
&\quad\,+\left\|U(\,\cdot\,,-\tfrac{3+\tau_i}\delta)\right\|_{H^{k_0+2} (\R)}+\left\|U(\,\cdot\,,\tfrac{3-\tau_i}\delta)\right\|_{H^{k_0+2}(\R)}\\
&\lesssim\left\|U_i^2\left(\,\cdot\,,\tfrac2\delta-\,\cdot\,\right)\right\|_{H^{k_0} (\slab_\delta)}+\left\|U_i^2\left(\,\cdot\,,-\tfrac2\delta-\,\cdot\,\right)\right\|_{H^{k_0} (\slab_\delta)}+\e^{-\frac{3-|\tau_i|}{\delta}}.
\end{align*}
For $|x_1|\leq\frac2\delta$, $|x_2|\leq\frac1\delta$ and $\frac{|\tau_i|}\delta<\frac{1-\varepsilon}{\delta}$,
\begin{align*}
\left|x-\frac{2-\tau_i}\delta e_2\right|&=\sqrt{x_1^2+\left(x_2-\frac {2-\tau_i}\delta\right)^2}\geq\sqrt{x_1^2+\left(\frac{1-\tau_i}{\delta}\right)^2}.
\end{align*}
Since
\begin{align*}
\sqrt{x_1^2+\left(\frac{1-\tau_i}{\delta}\right)^2}-\frac{1-\tau_i}{\delta}=\frac{x_1^2}{\sqrt{x_1^2+\left(\frac{1-\tau_i}{\delta}\right)^2}+\frac{1-\tau_i}{\delta}}\geq\frac{\delta x_1^2}{2(\sqrt{2}+1)},
\end{align*}
we have
$$\left|x-\frac{2-\tau_i}\delta e_2\right|\geq\frac{1-\tau_i}{\delta}+\frac{\delta x_1^2}{2(\sqrt{2}+1)}.$$
Then from Lemma \ref{prop:sign of Uy},
\begin{align*}
\left|U_i^2\left(\,\cdot\,,\tfrac2\delta-\,\cdot\,\right)\right|^2_{H^{k_0} (\slab_\delta)}&\lesssim\int_{\slab_\delta}\left|x-\frac{2-\tau_i}\delta e_2\right|^{-2}\e^{-4\left|x-\frac{2-\tau_i}\delta e_2\right|}\,dx\\
&\lesssim\int_{\left\{|x_1|\leq\frac2\delta\right\}}\int_{-\frac1\delta}^{\frac1\delta}\left(\frac{2-\tau_i}\delta-x_2\right)^{-2}\e^{-4\left(\frac{1-\tau_i}{\delta}+\frac{\delta x_1^2}{2(\sqrt{2}+1)}\right)}\,dx_2\,dx_1\\
&\quad\,+2\int_{\frac2\delta}^{\infty}\int_{-\frac1\delta}^{\frac1\delta}\left(\frac{2-\tau_i}\delta-x_2\right)^{-2}\e^{-4x_1}\,dx_2\,dx_1\\
&\lesssim\delta^{\frac{1}{2}}\e^{-\frac{4-4\tau_i}{\delta}}+\delta\e^{-\frac{8}{\delta}}\lesssim\delta^{\frac{1}{2}}\e^{-\frac{4-4|\tau_i|}{\delta}}.
\end{align*}
Analogously,
$$\left|U_i^2\left(\,\cdot\,,-\tfrac2\delta-\,\cdot\,\right)\right|^2_{H^{k_0} (\slab_\delta)}\lesssim\delta^{\frac{1}{2}}\e^{-\frac{4-4|\tau_i|}{\delta}}.$$
Therefore,
\begin{equation}\label{2032}
\left\|{R}_i\right\|_{H^{k_0+2}(\slab_\delta)}\lesssim\delta^{\frac{1}{4}}\e^{-\frac{2-2|\tau_i|}{\delta}}.
\end{equation}

\section{Linear theory}\label{sec:Spec}
\subsection{The inner-outer gluing system}\label{sec:IO}
Motivated by \cite{MR2270164,MR2846486}, we use an inner–outer gluing method to construct the perturbation term $u$ in \eqref{definition u}. For each concentration point, we consider an inner problem corresponding to the spike $U_i$, while the outer problem is mainly concerned with the correction outside the all spikes and the interactions between the spikes.

From \eqref{definition u}, we split $u$ into an outer part and inner parts:
\begin{align*}
u(x)=\sum_{i=1}^n\eta\left(\frac{\delta}{R}\left|x-\left(0,\frac{\tau_i}{\delta}\right)\right|\right)\phi_i\left(x\right)+\psi(x),
\end{align*}
where R is a small enough constant and $\eta(x)$ is a smooth cutoff function satisfying 
$$\eta(x) = 
\begin{cases}
    1, ~&|x|\leq1,\\
    0, ~&|x|\geq2.
\end{cases}$$
By \eqref{perturbed u eq}, for $i=1,\dots,n$, $x\in B_{i,2}:=\left\{x\in\R^2\;\left|\;\left|x-\left(0,\frac{\tau_i}{\delta}\right)\right|\leq \frac{2R}{\delta}\right.\right\}$, the inner problem is
\begin{align}\label{in}
\left(-\Delta+\gamma^\prime\left(U_i\right)\right)\phi_i + F(\boldsymbol{\tau},u,\Gamma_{\mathrm{s}})+\left(\gamma^\prime\left(\sum_{i=1}^n\sigma_iU_i\right)-1\right)\psi=0,\quad \text{for }i=1,\dots,n.
\end{align}
The outer problem is given by
\begin{align}\label{out}
\begin{cases}
&\Delta\psi(x)-\psi(x)\\
&=\left(1-\sum_{i=1}^n\eta\left(\frac{\delta}{R}\left|x-\left(0,\frac{\tau_i}{\delta}\right)\right|\right)\right)\left(\gamma^\prime\left(\sum_{i=1}^n\sigma_iU_i\right)-1\right)\psi(x)\\[3mm]
&\quad\,+\left(1-\sum_{i=1}^n\eta\left(\frac{\delta}{R}\left|x-\left(0,\frac{\tau_i}{\delta}\right)\right|\right)\right)F\\[3mm]
&\quad\,-\cfrac{2\delta}{R}\sum_{i=1}^n\nabla\eta\left(\frac{\delta}{R}\left|x-\left(0,\frac{\tau_i}{\delta}\right)\right|\right)\cdot\nabla\phi_i(x)-\cfrac{\delta^2}{R^2}\sum_{i=1}^n\Delta\eta\left(\frac{\delta}{R}\left|x-\left(0,\frac{\tau_i}{\delta}\right)\right|\right)\phi_i(x)\\[3mm]
&\quad\,+\sum_{i=1}^n\eta\left(\frac{\delta}{R}\left|x-\left(0,\frac{\tau_i}{\delta}\right)\right|\right)\left(\gamma^\prime\left(\sum_{i=1}^n\sigma_iU_i\right)-\gamma^\prime\left(U_i\right)\right)\phi_i(x),\quad x\in \slab_\delta,\\[4mm]
&\psi(x)=0,\quad x\in \partial\slab_\delta.
\end{cases}
\end{align}
\subsection{The linear theory of inner problem}
In this subsection, we consider the linear operator corresponding to problem \eqref{in}. Note that no boundary conditions are imposed, so we only need to find a solution $\phi_i$ defined on $\slab_\delta$ for each $i$.

Define
$$L_{\boldsymbol{\tau_i}} := -\Delta + \gamma^\prime\left(U_i\right).$$
For given $h\in L_{\mathrm{e}}^2(\slab_\delta)$, we consider
$$L_{\boldsymbol{\tau_i}}[\phi_i] = h\quad \text{in }\slab_\delta.$$

By Assumption~\ref{assumption:B}, the kernel of $L_{\boldsymbol{\tau}_i}$ in $H_{\mathrm e}^2(\mathbb R^2)$ is spanned by $\partial_{x_2} U_i$, since $\p_{x_1} U_i$ is odd in $x_1$. Let $P_i$ denote the orthogonal projection of $L_{\textrm{e}}^2(\mathbb{R}^2)$ onto $span{\{\p_{x_2} U_i\}}$ and we use the same symbol for the projection $H_{\textrm{e}}^2(\mathbb{R}^2)$ onto $span{\{\p_{x_2} U_i\}}$. The non-degeneracy result of $L_{\boldsymbol{\tau_i}}$ in $\mathbb{R}^2$ implies that
$$L_{\boldsymbol{\tau}_i}:(I-P_i)H_{\mathrm e}^2(\mathbb R^2)\longrightarrow(I-P_i)L_{\mathrm e}^2(\mathbb R^2)$$
is an isomorphism, with uniformly bounded inverse for $|\tau_i| < 1-\varepsilon$.

Now given a function $f\in L_{\mathrm{e}}^2(\slab_\delta)$, and define its zero extension $Ef$ by
$$Ef=\begin{cases}
    f(x),&\quad x\in \slab_\delta,\\
    0,&\quad x\in \R^2\backslash\slab_\delta.
\end{cases}$$
Then $Ef\in L_{\mathrm e}^2(\mathbb R^2)$ and
$$\left\|Ef\right\|_{L^2(\R^2)}=\|f\|_{L^2(\slab_\delta)}.$$
If
$$\left(\p_{x_2} U_i,f\right)_{L^2(\slab_\delta)}=\left(\p_{x_2} U_i,Ef\right)_{L^2(\R^2)}=0,$$
then $Ef\in(I-P_i)L_{\mathrm e}^2(\mathbb R^2)$. By the nondegeneracy of $L_{\boldsymbol{\tau}_i}$ in $\R^2$, there exists a unique $\phi\in(I-P_i)H_{\mathrm e}^2(\mathbb R^2)$ satisfying
$$L_{\boldsymbol{\tau}_i}\phi=Ef\quad\text{in }\mathbb R^2.$$
Consequently, the restriction of $\phi$ to $\slab_\delta$ satisfies
$$L_{\boldsymbol{\tau_i}}\phi=f\quad\text{in }\slab_\delta,$$
and the nondegeneracy estimate extends as follows.

\begin{lemma}\label{non-degeneracy proposition}
Suppose that $\|h\|_{L^2(\slab_\delta)}<+\infty$ and satisfies orthogonality condition
\begin{align}\label{42}
\left(\p_{x_2} U_i\,,\,Eh\right)_{L^2(\R^2)}=\left(\p_{x_2} U_i\,,\,h\right)_{L^2(\slab_\delta)}=0.
\end{align}
Then there exists $\delta_0 >0$ and $c_0 > 0$ 
such that, for all $\delta \in (0,\delta_0)$ and $|\tau_i|<1-\varepsilon$, problem
$$L_{\boldsymbol{\tau_i}}[\phi_i] = Eh$$
has a unique solution $\phi_i\in (I-P_i)H_{\textrm{e}}^2(\mathbb{R}^2)$ that satisfies the estimate
\begin{equation}\label{1512}
\|\phi\|_{H^2(\slab_\delta)}\leq c_0\|h\|_{L^2(\slab_\delta)}
\end{equation}
\end{lemma}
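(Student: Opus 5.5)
The plan is to reduce everything to the whole-plane nondegeneracy in Assumption (B), transported by translation, and then restrict back to $\slab_\delta$. The only points needing care are that the constant $c_0$ is uniform in $\delta$ and $\tau_i$, and that the $H^2(\slab_\delta)$ norm of the restriction is controlled by the $H^2(\R^2)$ norm.

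\textbf{Step 1: reduce to the untranslated operator.} Write $T_i f(x)=f\bigl(x+\tfrac{\tau_i}{\delta}e_2\bigr)$, which is a unitary map on $L^2(\R^2)$ and an isometry on $H^2(\R^2)$. Since $U_i=U(\cdot-\tfrac{\tau_i}{\delta}e_2)$ and $\gamma'(U_i)=\gamma'(U)\circ T_i^{-1}$, we have
\[
L_{\boldsymbol{\tau_i}}=T_i^{-1}\bigl(-\Delta+\gamma'(U)\bigr)T_i .
\]
Moreover $T_i$ maps $\p_{x_2}U_i$ to $\p_{x_2}U$. Because the translation is vertical, it preserves evenness in $x_1$. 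So the statement reduces to the case $\tau_i=0$, i.e.\ to $L_0:=-\Delta+\gamma'(U)$ acting on $x_1$-even functions.

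\textbf{Step 2: invertibility of $L_0$ off the kernel.} $L_0$ is self-adjoint on $L^2(\R^2)$ with domain $H^2(\R^2)$. Since $\gamma'(U)\to\gamma'(0)=1$ at infinity, $L_0$ is a relatively compact perturbation of $-\Delta+1$, so its essential spectrum is $[1,\infty)$. Hence $0$ is an isolated eigenvalue of finite multiplicity. By (B), its kernel on even functions is $\operatorname{span}\{\p_{x_2}U\}$, because $\p_{x_1}U$ is odd in $x_1$ and even functions are orthogonal to it. Therefore $L_0$ restricted to $(I-P)H^2_{\mathrm e}$ is a bijection onto $(I-P)L^2_{\mathrm e}$, with
\[
\|\phi\|_{L^2}\le d^{-1}\|L_0\phi\|_{L^2},\qquad d=\operatorname{dist}\bigl(0,\sigma(L_0)\setminus\{0\}\bigr)>0 .
\]
The $H^2$ bound then follows from
\[
\|\Delta\phi\|_{L^2}\le\|L_0\phi\|_{L^2}+\|\gamma'(U)\|_{L^\infty}\|\phi\|_{L^2},
\]
together with the Fourier characterization of the $H^2$ norm. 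Surjectivity onto the orthogonal complement comes from self-adjointness and closed range. Uniqueness within $(I-P)H^2_{\mathrm e}$ holds because any element of the kernel lies in $\operatorname{span}\{\p_{x_2}U\}$, which is removed by the projection.

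\textbf{Step 3: apply to $Eh$ and restrict.} By the orthogonality condition \eqref{42}, $T_iEh$ is even and orthogonal to $\p_{x_2}U$. Step 2 gives $\phi_i=T_i^{-1}L_0^{-1}T_iEh$ with
\[
\|\phi_i\|_{H^2(\R^2)}\le c_0\|Eh\|_{L^2(\R^2)}=c_0\|h\|_{L^2(\slab_\delta)} .
\]
Restriction to $\slab_\delta$ does not increase the $H^2$ norm, which yields \eqref{1512}. The constant $c_0$ depends only on $U$ and $\gamma$, not on $\delta$ or $\tau_i$, so any $\delta_0$ works.

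The main obstacle is justifying the spectral gap in Step 2, i.e.\ that $0$ is isolated. I would handle this by Weyl's theorem, using that $\gamma'(U)-1$ is bounded and decays exponentially by Lemma \ref{prop:sign of Uy}, so multiplication by it is $(-\Delta+1)$-compact.
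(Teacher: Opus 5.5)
Your argument is correct, but it takes a different route from the paper. The paper proves the estimate by contradiction. It takes sequences $\delta_k,\tau_{i,k},h_k,\phi_k$ with $\|h_k\|_{L^2}\to0$ and $\|\phi_k\|_{H^2(\slab_{\delta_k})}=1$, then recenters and cuts off $\phi_k$ at the spike and extracts a weak $H^2$ limit. It passes the orthogonality condition to that limit, so the limit lies in $\ker(-\Delta+\gamma'(U))$ and is orthogonal to $\partial_{x_2}U$, hence vanishes. Finally it uses the decay of $\gamma'(U)-1$ to obtain $(-\Delta+1)\phi_k\to0$ in $L^2$, contradicting the normalization.

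You instead note that on $\R^2$ the operator $L_{\boldsymbol{\tau_i}}$ is \emph{exactly} conjugate to the fixed operator $-\Delta+\gamma'(U)$ via a vertical translation. That translation is unitary on $L^2$, isometric on $H^2$, and preserves $x_1$-parity. Hence uniformity in $\delta$ and $\tau_i$ is automatic, any $\delta_0$ works, and the sequences in $\delta_k,\tau_{i,k}$ are unnecessary. The compactness the paper extracts from weak limits is packaged in your proof into Weyl's theorem. (Strictly, the paper's weak limits give only $L^2_{\mathrm{loc}}$ convergence, which suffices because $\gamma'(U)-1$ decays.) You also get an explicit constant in terms of the spectral gap $d$ and $\|\gamma'(U)\|_{L^\infty}$. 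This supplies a proof of the paper's preliminary, unproved assertion that the inverse on $(I-P_i)L^2_{\mathrm e}$ is uniformly bounded.

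What the contradiction argument buys is robustness. It uses only (B) and the decay of the potential, with no spectral information. It also carries over to operators that are only approximately translates of the model one, where no exact conjugation is available. Examples are the potential $\gamma'(\sum_j\sigma_jU_j)$, Dirichlet conditions on $\partial\slab_\delta$, or the conformal factor $|1+\Gamma'(\delta\,\cdot)|^2$.

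One small point: your claim that $T_iEh$ is even uses $h\in L^2_{\mathrm e}(\slab_\delta)$, which is the paper's standing assumption in this subsection. Without it you would also need orthogonality to $\partial_{x_1}U_i$.
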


\begin{proof}
We will follow the method used in \cite{MR1721719}. The existence of a solution has already been discussed above. We now prove the estimate \eqref{1512}. Since $\|h\|_{L^2(\slab_\delta)}=\|Eh\|_{L^2(\R^2)}$, it suffices to prove
\begin{equation}\label{1513}
\|\phi\|_{H^2(\slab_\delta)}\leq c_0\|Eh\|_{L^2(\R^2)}.
\end{equation}
Suppose that \eqref{1513} is false. Then there exist sequences $\{\delta_k\}$, $\{\tau_{i,k}\}$, $\{\phi_k\}$ and $h_k$ ($k=1,2,\dots$) with $|\tau_{i,k}|<1-\varepsilon$, $\phi_k\in(I-P_{i,k})H_{\textrm{e}}^2(\mathbb{R}^2)$ such that
\begin{equation}\label{1522}
\|h_k\|_{L^2(\slab_{\delta_k})}\to 0,
\end{equation}
\begin{equation}\label{1525}
\|\phi_k\|_{H^2(\slab_{\delta_k})}=1,
\end{equation}
where $P_{i,k}$ denote the orthogonal projection of $L_{\textrm{e}}^2(\mathbb{R}^2)$ onto $span\left\{\p_{x_2} U(\,\cdot-\tfrac{\tau_{i,k}}{\delta_k}e_2)\right\}$ and we use the same symbol for the projection $H_{\textrm{e}}^2(\mathbb{R}^2)$ onto $span\left\{\p_{x_2} U(\,\cdot-\tfrac{\tau_{i,k}}{\delta_k}e_2)\right\}$. 

We introduce new sequences $\{\Phi_{i,k}\}$ by 
$$\Phi_{i,k}(x)=\chi(\delta_kx)\phi_k(x+(\tau_{i,k}/\delta_k)e_2),$$
where $\chi(z)$ is a cut-off function such that $\chi(z)=1$ for $|z|\leq R$ and $\chi(z)=0$ for $|z|>2R$.

It follows from \eqref{1525} and the smoothness of $\chi$ that
$$\|\Phi_{i,k}\|_{H^2(\R^2)}\leq C$$
for all $k$ sufficiently large. Therefore there exists a subsequence, again 
denoted by $\{\Phi_{i,k}\}$ which converges weakly to a limit $\Phi_{i,\infty}$ as $k\to\infty$ in $H^2(\R^2)$. We are now going to show that $\Phi_{i,\infty}\equiv 0$. As a first step we deduce 
\begin{equation}\label{2026}
\displaystyle\int_{\R^2}\Phi_{i,\infty}\partial_{x_2}U=0,\quad i=1,\dots,n.
\end{equation}
This statement is shown as
\begin{align*}
\displaystyle\int_{\R^2}\Phi_{i,k}(x)\partial_{x_2}U(x)\,dx&=\displaystyle\int_{\R^2}\chi(\delta_kx)\phi_k(x+(\tau_{i,k}/\delta_k)e_2)\partial_{x_2}U(x)\,dx\\
&=\displaystyle\int_{\R^2}\chi(\delta_kx-\tau_{i,k}e_2)\phi_k(x)\partial_{x_2}U(x-(\tau_{i,k}/\delta_k)e_2)\,dx\\
&=\displaystyle\int_{\R^2}\phi_k(x)\partial_{x_2}U(x-(\tau_{i,k}/\delta_k)e_2)\,dx+o(1)=o(1).
\end{align*}
Here we have used the facts that $\phi_k\in(I-P_{i,k})H_{\textrm{e}}^2(\mathbb{R}^2)$ and the exponential decay of $\partial_{x_2}U$ in Lemma \ref{eq:rate of decay}. This implies \eqref{2026}.

Using $\mathcal{K}_0=span{\{\p_{x_2} U\}}$ to denote the kernel and cokernel of the linear operator $-\Delta + \gamma^\prime(U)$. Equation \eqref{2026} implies that $\Phi_{i,\infty}\in \mathcal{K}_0^\perp$. By the exponential 
decay of $U$ and \eqref{1522}, we have after possibly taking a further subsequence that
$$-\Delta\Phi_{i,\infty} + \gamma^\prime(U)\Phi_{i,\infty}=0,$$
i.e., $\Phi_{i,\infty}\in\mathcal{K}_0$. Therefore $\Phi_{i,\infty} = 0$.

Hence 
$$\Phi_{i,k}\rightharpoonup 0\quad\text{in }H^2(\R^2)\quad \text{and}\quad \Phi_{i,k}\to 0\quad\text{in }L^2(\R^2)$$
as $k\to\infty$. By the definition of $\Phi_{i,k}$ and the estimates of $\gamma^\prime(U) = 1 + O(U)$ for small $U$ we get
$$\left\|\left(\gamma^\prime\left(U(\,\cdot\,-(\tau_{i,k}/\delta_k)e_2)\right)-1\right)\phi_k\right\|_{L^2(\R^2)}\to 0\quad\text{as }k\to\infty.$$
Furthermore,
$$\left\|\left(-\Delta+1\right)\phi_k\right\|_{L^2(\R^2)}\to 0\quad\text{as }k\to\infty.$$
In summary,
\begin{equation}\label{2054}
\left\|\Delta \phi_k\right\|_{L^2(\R^2)}\to 0\qquad\text{and}\qquad \left\|\phi_k\right\|_{L^2(\R^2)}\to 0.
\end{equation}
From \eqref{2054} and the following elliptic regularity estimate (see \cite[Theorem 11.2.1]{MR3012036})
$$\left\|\phi_k\right\|_{H^2(\slab_{\delta_k})}\leq C \left(\left\|\Delta \phi_k\right\|_{L^2(\R^2)}+\left\|\phi_k\right\|_{L^2(\R^2)}\right).$$
Then we imply that
$$\left\|\phi_k\right\|_{H^2(\slab_{\delta_k})}\to 0\quad\text{as }k\to\infty.$$
This contradicts the assumption
$$\left\|\phi_k\right\|_{H^2(\slab_{\delta_k})}= 1$$
and the proof of Lemma \ref{non-degeneracy proposition} is completed. 
\end{proof}
By elliptic theory, the nondegeneracy also holds in higher Sobolev spaces.
\begin{corollary}\label{cor4.2}
Suppose that $\|h\|_{H^k(\slab_\delta)}<+\infty$ and satisfies orthogonality condition
$$\left(\p_{x_2} U_i\,,\,Eh\right)_{L^2(\R^2)}=\left(\p_{x_2} U_i\,,\,h\right)_{L^2(\slab_\delta)}=0.$$
Then there exists $\delta_0 >0$ and $c_0 > 0$ 
such that, for all $\delta \in (0,\delta_0)$ and $|\tau_i|<1-\varepsilon$, problem
$$L_{\boldsymbol{\tau_i}}[\phi_i] = Eh$$
has a unique solution $\phi_i\in (I-P_i)H_{\textrm{e}}^{k+2}(\mathbb{R}^2)$ that satisfies the estimate
\begin{equation*}
\|\phi\|_{H^{k+2}(\slab_\delta)}\leq c_0\|h\|_{H^k(\slab_\delta)}
\end{equation*}
\end{corollary}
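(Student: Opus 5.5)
The plan is to bootstrap from Lemma~\ref{non-degeneracy proposition} by induction on $k$, using interior-type elliptic regularity for $-\Delta+1$ on all of $\R^2$. Existence and uniqueness of $\phi_i\in(I-P_i)H^2_{\mathrm e}(\R^2)$ with $L_{\boldsymbol{\tau}_i}\phi_i=Eh$ are already given. The only new content is that $\phi_i$ lies in $H^{k+2}$, with the bound
\[
\|\phi_i\|_{H^{k+2}(\slab_\delta)}\le c_0\|h\|_{H^k(\slab_\delta)}.
\]
I would rewrite the equation as
\[
(-\Delta+1)\phi_i=Eh-\bigl(\gamma'(U_i)-1\bigr)\phi_i\qquad\text{in }\R^2
\]
and use that $(1-\Delta)^{-1}:H^{m}(\R^2)\to H^{m+2}(\R^2)$ is bounded with a constant independent of $\delta$, by Fourier multipliers.

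The key step is to control the potential term. By Assumption~(A) we have $\gamma\in C^{k_0}$ and $U\in C^{k_0+2}$ with derivatives up to order $k_0+2$ decaying exponentially (Lemma~\ref{prop:sign of Uy}). Hence $\gamma'(U_i)-1$ is bounded in $W^{m,\infty}(\R^2)$ for $m\le k_0-1$. Translation in $x_2$ does not change these norms, so the bound is uniform in $\tau_i$ and $\delta$. Then for $m\le\min(k,k_0-1)$,
\[
\|(\gamma'(U_i)-1)\phi_i\|_{H^m(\R^2)}\lesssim\|\phi_i\|_{H^m(\R^2)}.
\]
Induction then gives
\[
\|\phi_i\|_{H^{m+2}}\lesssim\|Eh\|_{H^m}+\|\phi_i\|_{H^m}\lesssim\|h\|_{H^k}+\|\phi_i\|_{H^{m}},
\]
starting from the $H^2$ bound of Lemma~\ref{non-degeneracy proposition}.

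The main obstacle is that the zero extension $Eh$ is in general not in $H^k(\R^2)$ for $k\ge1$, because it jumps across $\partial\slab_\delta$. So the global Fourier argument only controls $\phi_i$ in $H^2(\R^2)$. To get the estimate on $\slab_\delta$ I would localize. Take a partition of unity of unit scale, and for patches touching $\partial\slab_\delta$ use regularity for $-\Delta+1$ on the half-space side. This is where I am least certain: $\phi_i$ satisfies no boundary condition, so boundary regularity is not automatically available.

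The cleaner alternative I would pursue is to replace $E$ by a bounded Sobolev extension operator $\widetilde E:H^k(\slab_\delta)\to H^k(\R^2)$, for instance by higher-order reflection across the lines $x_2=\pm1/\delta$. Its norm is uniform in $\delta$, since the boundary is flat and the reflection is local. The orthogonality against $\partial_{x_2}U_i$ can be restored with an exponentially small correction, because $\partial_{x_2}U_i$ is exponentially small near $\partial\slab_\delta$. The resulting solution is then estimated in $H^{k+2}(\R^2)$ by the induction above. Since $L_{\boldsymbol{\tau}_i}$ is invertible on the complement of $\mathcal K_0$, it agrees with the $E$-solution up to a term whose size is governed by $\widetilde Eh-Eh$ and the projection correction. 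This is the point at which the argument most likely needs care, and it may be that the statement is intended only in this extended sense.

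Finally, the restriction of the constructed solution to $\slab_\delta$ gives the stated inequality. The constant $c_0$ depends only on $k$, the $W^{k,\infty}$ bounds of $\gamma'(U)$ and the extension constant, all independent of $\delta$ and of $|\tau_i|<1-\varepsilon$.
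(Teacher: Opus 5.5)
There is a genuine gap, at exactly the point you flagged, and it is not only a limitation of the method. $L_{\boldsymbol{\tau}_i}$ maps $H^{k+2}(\R^2)$ into $H^{k}(\R^2)$, while $Eh\notin H^1(\R^2)$ whenever $h$ has nonzero trace on $\partial\slab_\delta$. So for $k\ge1$ the solution of $L_{\boldsymbol{\tau}_i}\phi_i=Eh$ is in general \emph{not} in $H^{k+2}(\R^2)$: the membership $\phi_i\in(I-P_i)H^{k+2}_{\mathrm e}(\R^2)$ is false as written. The paper's own proof is only the sentence ``by elliptic theory'', which must be read as the weaker claim. What is true, and all that is used later, is that the $H^2(\R^2)$ solution of Lemma~\ref{non-degeneracy proposition} satisfies $\|\phi_i\|_{H^{k+2}(\slab_\delta)}\le c_0\|h\|_{H^k(\slab_\delta)}$ for $k\le k_0-1$.

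Neither of your routes establishes this. The half-space localization is left open. The extension route at best produces a different function, namely an $H^{k+2}(\R^2)$ solution of the equation in $\slab_\delta$ only, and the comparison step fails. The difference $w$ between the $E$-solution and the $\widetilde E$-solution solves $L_{\boldsymbol{\tau}_i}w=(E-\widetilde E)h+c\,\partial_{x_2}U_i$. Here $(E-\widetilde E)h$ vanishes in $\slab_\delta$ and equals $-\widetilde Eh$ outside. It is therefore of size $\|h\|_{H^k}$, not small, and has the same jump across $\partial\slab_\delta$ as $Eh$. Estimating $w$ in $H^{k+2}(\slab_\delta)$ is the original problem again.

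The missing idea is that no boundary condition is needed. $\phi_i$ is a global $H^2(\R^2)$ solution, and since $\partial\slab_\delta$ consists of horizontal lines, tangential derivatives commute with zero extension: $\partial_{x_1}(Eh)=E(\partial_{x_1}h)$. So your Fourier argument does work for tangential derivatives. Suppose $\partial_{x_1}^l\phi_i\in H^2(\R^2)$ for all $l<j\le k$. Then $\partial_{x_1}^j\phi_i\in H^1(\R^2)$ and
\[
(1-\Delta)\,\partial_{x_1}^j\phi_i=E\bigl(\partial_{x_1}^jh\bigr)-\partial_{x_1}^j\bigl((\gamma'(U_i)-1)\phi_i\bigr)\in L^2(\R^2).
\]
Hence $\partial_{x_1}^j\phi_i\in H^2(\R^2)$, with a bound by $\|h\|_{H^k(\slab_\delta)}+\|\phi_i\|_{H^2(\R^2)}$ that is uniform in $\delta$ and $\tau_i$. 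This controls every $\partial_{x_1}^a\partial_{x_2}^b\phi_i$ with $a+b\le k+2$ and $b\le 2$.

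The remaining derivatives are recovered inside $\slab_\delta$ from
\[
\partial_{x_2}^2\phi_i=\gamma'(U_i)\phi_i-h-\partial_{x_1}^2\phi_i,
\]
by induction on $b$. This is the only place where $h\in H^k(\slab_\delta)$, rather than $Eh\in H^k(\R^2)$, is needed. Finally, $\|\phi_i\|_{H^2(\R^2)}\lesssim\|h\|_{L^2(\slab_\delta)}$ follows from the uniformly bounded inverse of $L_{\boldsymbol{\tau}_i}$ on $(I-P_i)H^2_{\mathrm e}(\R^2)$, since this operator is just a vertical translate of $-\Delta+\gamma'(U)$. This closes the estimate on $\slab_\delta$ and removes the worry in your localization step.
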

Furthermore, by elliptic regularity estimate (see \cite[Theorem 11.3.2]{MR3012036}), if $h\in C^0(\slab_\delta)$, then $\phi_i\in C^{1,\alpha}(\slab_\delta)$ and
$$\left\|\phi_i\right\|_{C^{1,\alpha}(B_{i,2})}\lesssim\left\|h\right\|_{C^0(\slab_\delta)}+\left\|\phi_i\right\|_{L^2(\slab_\delta)}\lesssim\left\|h\right\|_{C^0(\slab_\delta)}+\left\|h\right\|_{L^2(\slab_\delta)}.$$
If $h\in H^m(\slab_\delta)$ with $m>1$, by Sobolev embedding,
\begin{equation}\label{133}
\left\|\phi_i\right\|_{C^{1,\alpha}(B_{i,2})}\lesssim\left\|h\right\|_{H^m(\slab_\delta)},\qquad 0<\alpha<\min\{1,m-1\}.
\end{equation}

\subsection{The linear theory of the outer problem}

To solve the outer problem \eqref{out}, we consider the associated
linear problem
\begin{equation}\label{1845}
\begin{cases}
\Delta\psi-\psi=h,
&\quad x\in\slab_\delta,\\[2mm]
\psi=0,
&\quad x\in\partial\slab_\delta .
\end{cases}
\end{equation}
Recall Section \ref{subsec:basic-properties-U}, the solution of \eqref{1845} can be represented as
\begin{equation}\label{134}
\psi(x)=\int_{\slab_\delta}\widetilde G(x,\xi)h(\xi)\,d\xi.
\end{equation}
Moreover, from the exponential decay of $G_0$ in \eqref{012}, the series defining $\widetilde G$ converges absolutely and inherits the same decay properties away from the singularity. Consequently, standard elliptic estimates yield
$$\|\psi\|_{H^k(\slab_\delta)}\lesssim\|h\|_{H^{k-2}(\slab_\delta)},\qquad k\ge2.$$

\section{Solve the inner-outer gluing system} \label{Solve the inner-outer gluing system}
In this section, we use the fixed‑point argument to solve the inner-outer gluing system \eqref{in}, \eqref{out} and \eqref{perturbed Gamma eq}. Thus obtaining the functions $(\phi_1,\dots,\phi_n,\psi,\Gamma_{\mathrm{s}})$ and reducing the system to a finite-dimensional problem.

\subsection{Solving the outer problem}Before considering the outer problem \eqref{out}, we first employ the fixed-point method to determine $\Gamma_{\mathrm{s}}$ from equation \eqref{perturbed Gamma eq}. This requires us to consider the bound on $G(\boldsymbol{\tau}, 0, 0)$, $D_u G$ and $D_{\Gamma_{\mathrm{s}}}G$. Here the operators $D_u G$ and $D_{\Gamma_{\mathrm{s}}}G$ are defined respectively as
$$D_uG(\boldsymbol{\tau},u,\Gamma_{\mathrm{s}})[v]=\frac{\mathrm{d}}{\mathrm{d}t}G(\boldsymbol{\tau},u+tv,\Gamma_{\mathrm{s}})\bigg|_{t=0},\qquad v\in H^k(\slab_\delta)$$
and
$$D_{\Gamma_{\mathrm{s}}}G(\boldsymbol{\tau},u,\Gamma_{\mathrm{s}})[\Lambda_{\mathrm{s}}]=\frac{\mathrm{d}}{\mathrm{d}t}G(\boldsymbol{\tau},u,\Gamma_{\mathrm{s}}+t\Lambda_{\mathrm{s}})\bigg|_{t=0},\qquad \Lambda_{\mathrm{s}}\in H^k(\R).$$

For $G(\tau, 0, 0)$, one can directly compute 
\begin{equation*}
G(\boldsymbol{\tau}, 0, 0)
= \frac 1{2 \delta^2}\left(\p_{x_2}\omega_{\boldsymbol{\tau}}\left(\tfrac{\,\cdot\,}{\delta}, \tfrac 1\delta \right)\right)^2. 
\end{equation*}
By Corollary \ref{C:Ubc} and the Sobolev trace theorem, for any $k_0+\frac 12\ge k\ge 0$, $i=1,\dots,n$, we have
\begin{equation*}
\left \|\p_{x_2} (U_i - (U_i)_{\mathrm{bc}}) (\,\cdot\,, \tfrac 1\delta)
-2(\p_{x_2} U_i) (\,\cdot\,, \tfrac 1\delta) \right\|_{H^k (\R)} \lesssim \|(\p_{x_2} U_i) ( \,\cdot\,, -\tfrac 3\delta)\|_{H^k (\R)} + \delta^{\frac 34} e^{-\frac {2(1-|\tau_i|)}\delta}.
\end{equation*}
From Proposition \ref{prop:sign of Uy}, we can estimate $(\p_{x_2} U_i) (\,\cdot\,, \frac 1\delta)$
\begin{equation*}
\begin{split}
\left \|(\p_{x_2} U_i) ( \,\cdot\,, \tfrac 1\delta) \right\|_{H^k (\R)}^2 \lesssim & \left(\int_0^{\delta^{-\frac 12}} + \int_{\delta^{-\frac 12}}^\infty\right) \left(x_1^2 + (\tfrac {1-\tau_i}\delta)^2 \right)^{-\frac 12} e^{-2(x_1^2 + (\frac {1-\tau_i} \delta)^2)^{\frac 12}} \, \diff x_1\\
\lesssim & \delta^{\frac 12} e^{-\frac {2(1-\tau_i)}\delta} + \int_{\frac {((1-\tau_i)^2 +\delta)^{\frac 12}}\delta }^\infty \frac {\diff \rho} {x_1(\rho) e^{2\rho}}\lesssim \delta^{\frac 12} e^{-\frac {2(1-\tau_i)}\delta},  
\end{split}
\end{equation*}
where $x_1 (\rho) = (\rho^2 - (\frac {1-\tau_i}\delta)^2)^{\frac 12}$.
This implies that
\begin{equation} \label{1150}
\left\|(\p_{x_2} (U_i - (U_i)_{\mathrm{bc}})) (\,\cdot\,, \tfrac 1\delta) \right\|_{H^k (\R)} \lesssim \delta^{\frac 14} e^{-\frac {1-\tau_i}\delta}+ \delta^{\frac 34} e^{-\frac {2(1-|\tau_i|)}\delta}\quad\text{for all }i=1,\dots,n.
\end{equation}
From the one-dimensional scaling property,
$$\|f(\tfrac{\cdot}{\delta})\|_{\dot H^s (\R)} = \delta^{-s+\frac 12} \|f\|_{\dot H^s (\R)} \quad\text{for all }s\in \mathbb{R}.$$
By Lemma \ref{prop:sign of Uy}, we obtain the estimate on $G(\boldsymbol{\tau}, 0, 0)$ from the scaling property as 
\begin{equation}\label{344}
\|G(\boldsymbol{\tau},0, 0)\|_{H^{k-2} (\R)} \lesssim \delta^{\frac 12 -(k-2)-2} \left\| \left((\p_{x_2}\omega_{\boldsymbol{\tau}}) (\,\cdot\,, \tfrac 1\delta)\right)^2 \right\|_{H^{k-2} (\R)} \lesssim \delta^{1-k} e^{-\frac {2(1-|\boldsymbol{\tau}|)}\delta}.
\end{equation}

For $D_u G$, from the definition of $G$, for any $v \in H_{\mathrm{e}}^k(\slab_\delta)$
$$D_u G(\boldsymbol{\tau}, u, \Gamma_{\mathrm{s}}) v= \frac{1}{\delta^{2}} A(\Gamma_{\mathrm{s}})^{-1} \left[ \frac{(\p_{x_2} (\omega_{\boldsymbol{\tau}}+u)\p_{x_2} v)(\frac{\,\cdot\,} \delta,\frac{1}{\delta}) }{(1 +|\Diff| \coth{(2|\Diff|)} \Gamma_{\mathrm{s}})^2 + {\Gamma_{\mathrm{s}}^\prime}^2}\right]. $$
By the Sobolev trace theorem and \eqref{1150}, we have 
\begin{align*}
& \left\| \left(\p_{x_2} (\omega_{\boldsymbol{\tau}}+u)\p_{x_2} v \right)({\,\cdot\,},\tfrac{1}{\delta})\right\|_{H^{k-2} (\R)} \\
&\lesssim\|\p_{x_2} (\omega_{\boldsymbol{\tau}}+u)({\,\cdot\,},\tfrac{1}{\delta})\|_{H^{k} (\R)} \|\p_{x_2} v ({\,\cdot\,},\tfrac{1}{\delta})\|_{H^{k} (\R)} \lesssim \left(\|u\|_{H^k (\slab_\delta)} + \delta^{\frac 14} e^{-\frac {1-|\boldsymbol{\tau}|}\delta} \right) \|v\|_{H^k (\slab_\delta)}.
\end{align*}
The scaling property implies the bound on $D_u G$.
\begin{equation}\label{107}
\|D_u G(\boldsymbol{\tau}, u, \Gamma_{\mathrm{s}}) v\|_{H^{k-2} (\R)}\lesssim \left(\delta^{\frac 12-k}\|u\|_{H^k (\slab_\delta)} + \delta^{\frac 34-k} e^{-\frac {1-|\boldsymbol{\tau}|}\delta} \right) \|v\|_{H^k (\slab_\delta)}.
\end{equation}
Regarding $D_{\Gamma_{\mathrm{s}}} G$, for any $\Lambda_{\mathrm{s}} \in H_{\mathrm{e}}^k(\mathbb{R})$, 
\begin{align*}
2\delta^2 D_{\Gamma_{\mathrm{s}}} G(\boldsymbol{\tau}, u, \Gamma_{\mathrm{s}}) \Lambda_{\mathrm{s}} = & \left( D_{\Gamma_{\mathrm{s}}} (A(\Gamma_{\mathrm{s}})^{-1}) \Lambda_{\mathrm{s}} \right) \left[ \frac{(\p_{x_2} (\omega_{\boldsymbol{\tau}}+u))(\frac{\,\cdot\,} \delta,\frac{1}{\delta})^2 }{(1 +|\Diff| \coth{(2|\Diff|)} \Gamma_{\mathrm{s}})^2 + {\Gamma_{\mathrm{s}}^\prime}^2}\right] \\
&-2A(\Gamma_{\mathrm{s}})^{-1}\left[ \frac{(\p_{x_2} (\omega_{\boldsymbol{\tau}}+u))(\frac{\,\cdot\,} \delta,\frac{1}{\delta})^2 }{\left( (1 +|\Diff| \coth{(2|\Diff|)} \Gamma_{\mathrm{s}})^2 + {\Gamma_{\mathrm{s}}^\prime}^2\right)^2}\right.  \\
&\left. \times \left(  (1 +|\Diff| \coth{(2|\Diff|)}\Gamma_{\mathrm{s}})|\Diff| \coth{(2|\Diff|)}  \Lambda_{\mathrm{s}} + \Gamma_{\mathrm{s}}' \Lambda_{\mathrm{s}} '\right)\right].
\end{align*}
For small $\|u\|_{H^{k}(\slab_\delta)}$ and $\|\Gamma_{\mathrm{s}}\|_{H^k (\R)}$, using the scaling property, \eqref{1150} and the Sobolev trace theorem, we obtain 
\begin{equation}\label{113}
\begin{aligned}
\|D_{\Gamma_{\mathrm{s}}} G(\boldsymbol{\tau}, u, \Gamma_{\mathrm{s}}) \Lambda_{\mathrm{s}}\|_{H^{k-2} (\R)} \lesssim & \delta^{-2} \|(\p_{x_2} (\omega_{\boldsymbol{\tau}}+u))(\tfrac{\,\cdot\,} \delta,\tfrac{1}{\delta})^2\|_{H^{k-2}(\R)}  \|\Lambda_{\mathrm{s}}\|_{H^k (\R)} \\
\lesssim & \delta^{\frac 12-k} \left(\|u\|_{H^{k}(\slab_\delta)}^2 + \delta^{\frac 12} e^{-\frac {2(1-|\boldsymbol{\tau}|)}\delta} \right)  \|\Lambda_{\mathrm{s}}\|_{H^k (\R)}.
\end{aligned}
\end{equation}
Using the above estimate, we have
\begin{prop}\label{prop5.1}
Assume $\|u\|_{H^{k}(\slab_\delta)}\leq\delta^{k+1}$. Then there exist constants $\widetilde C>0$ and $\delta_0>0$ such that for 
$\boldsymbol{\tau} \in P_{\varepsilon}$ and $\delta\in(0,\delta_0)$, the operator
$$\mathrm{T}_s[\Gamma_{\mathrm{s}}]:=-\left(g-\alpha^2(\Diff)^2\right)^{-1}G(\boldsymbol{\tau},u,\Gamma_{\mathrm{s}})$$
has a fixed point $\widetilde{\Gamma}_{\mathrm{s}}[u]$ in the set
$$B_s:=\{\Gamma_{\mathrm{s}}\in H_{\mathrm{e}}^k(\R)\;|\;\|\Gamma_{\mathrm{s}}\|_{H^k (\R)}\leq \delta^{k}\},$$
which satisfies
$$\left\|\widetilde{\Gamma}_{\mathrm{s}}[u]\right\|_{H^k(\R)}\leq \widetilde C\left(\delta^{1-k} e^{-\frac {2(1-|\boldsymbol{\tau}|)}\delta}+\delta^{\frac 12-k}\|u\|_{H^k (\slab_\delta)}^2\right).$$
\end{prop}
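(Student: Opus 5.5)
The plan is to apply the Banach fixed point theorem to $\mathrm{T}_s$ on the closed ball $B_s$ of $H^k_{\mathrm e}(\R)$. The only facts needed are the invertibility of $g-\alpha^2\partial_{x_1}^2:H^k(\R)\to H^{k-2}(\R)$, with $\|(g-\alpha^2\partial_{x_1}^2)^{-1}\|_{\mathcal L(H^{k-2},H^k)}\lesssim 1$, and the bounds \eqref{344}, \eqref{107}, \eqref{113}. Evenness is preserved because $G$ involves only even Fourier multipliers, and $\omega_{\boldsymbol\tau}$ and $u$ are even in $x_1$. For $\Gamma_{\mathrm s}\in B_s$ we have $\|\Gamma_{\mathrm s}\|_{H^k}\le\delta^k\ll1$, so $A(\Gamma_{\mathrm s})$ is invertible with $\|A(\Gamma_{\mathrm s})^{-1}\|\lesssim1$, and the denominator in \eqref{def G} is bounded away from zero; hence $G$ and $\mathrm{T}_s$ are well defined there.

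\emph{Size bound.} I split
\[
G(\boldsymbol\tau,u,\Gamma_{\mathrm s})=G(\boldsymbol\tau,0,\Gamma_{\mathrm s})+\int_0^1 D_uG(\boldsymbol\tau,tu,\Gamma_{\mathrm s})[u]\,dt .
\]
The first term is handled as in \eqref{344}, where the only new input is $\|A^{-1}\|\lesssim1$ and the bounded denominator. This gives $\lesssim\delta^{1-k}e^{-2(1-|\boldsymbol\tau|)/\delta}$.

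For the second term I would not use \eqref{107} as written, since it would produce a cross term $\delta^{3/4-k}e^{-(1-|\boldsymbol\tau|)/\delta}\|u\|$. Instead I expand the quadratic numerator directly:
\[
(\partial_{x_2}(\omega_{\boldsymbol\tau}+u))^2=(\partial_{x_2}\omega_{\boldsymbol\tau})^2+2\partial_{x_2}\omega_{\boldsymbol\tau}\,\partial_{x_2}u+(\partial_{x_2}u)^2 .
\]
The trace estimate together with the scaling factor $\delta^{-2}\cdot\delta^{1/2-(k-2)}$ gives the bound
\[
\delta^{1-k}e^{-2(1-|\boldsymbol\tau|)/\delta}+\delta^{3/4-k}e^{-(1-|\boldsymbol\tau|)/\delta}\|u\|_{H^k}+\delta^{1/2-k}\|u\|_{H^k}^2 .
\]
The cross term is absorbed by Young's inequality $ab\le a^2+b^2$ with $a=\delta^{1/2-k/2}e^{-(1-|\boldsymbol\tau|)/\delta}$ and $b=\delta^{1/4-k/2}\|u\|$, producing the two terms of the claimed estimate. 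This yields
\[
\|\mathrm{T}_s\Gamma_{\mathrm s}\|_{H^k}\le \widetilde C\bigl(\delta^{1-k}e^{-2(1-|\boldsymbol\tau|)/\delta}+\delta^{1/2-k}\|u\|_{H^k}^2\bigr).
\]
Using $\|u\|\le\delta^{k+1}$, the second term is $\lesssim\delta^{k+5/2}$. Since $\boldsymbol\tau\in P_\varepsilon$, we have $1-|\boldsymbol\tau|>\varepsilon$, so the first term is $\lesssim\delta^{1-k}e^{-2\varepsilon/\delta}$. Both are $\le\delta^k$ for small $\delta$, so $\mathrm{T}_s$ maps $B_s$ into itself.

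\emph{Contraction.} For $\Gamma^1,\Gamma^2\in B_s$ I apply the mean value theorem in $\Gamma_{\mathrm s}$ and \eqref{113}:
\[
\|\mathrm{T}_s\Gamma^1-\mathrm{T}_s\Gamma^2\|_{H^k}\lesssim\delta^{1/2-k}\bigl(\delta^{2k+2}+\delta^{1/2}e^{-2\varepsilon/\delta}\bigr)\|\Gamma^1-\Gamma^2\|_{H^k}\le\tfrac12\|\Gamma^1-\Gamma^2\|_{H^k}
\]
for $\delta$ small. Banach's theorem then gives a unique fixed point $\widetilde\Gamma_{\mathrm s}[u]\in B_s$, and applying the size bound at the fixed point gives the stated estimate.

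The main obstacle is the derivative $D_{\Gamma_{\mathrm s}}(A(\Gamma_{\mathrm s})^{-1})$ appearing in \eqref{113}, together with the uniform bound on $A^{-1}$. This requires $A(\Gamma_{\mathrm s})$ to be Lipschitz in $\Gamma_{\mathrm s}$ as a map into $\mathcal L(H^{k-2})$, which holds for $k$ large enough that $H^{k-1}$ is an algebra; I would take this as given from the paper's remark that $\|A(\Gamma_{\mathrm s})-id\|\lesssim\|\Gamma_{\mathrm s}\|_{H^k}$. The Young's-inequality step above is the other point needing care.
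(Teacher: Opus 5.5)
Your proposal is correct and follows the paper's proof: a contraction-mapping argument on $B_s$ driven by \eqref{344}, \eqref{107} and \eqref{113}, with the same self-map and contraction constants. Your Young's-inequality step is a genuine improvement. The paper's final line bounds $\|\widetilde\Gamma_{\mathrm s}[u]\|_{H^k}$ by $\|G(\boldsymbol\tau,0,0)\|+\|D_uG(\boldsymbol\tau,\xi,0)u\|$ and silently drops the cross term $\delta^{3/4-k}e^{-(1-|\boldsymbol\tau|)/\delta}\|u\|_{H^k}$ that \eqref{107} produces. Absorbing that term via $ab\le a^2+b^2$, and then evaluating the uniform size bound directly at the fixed point rather than comparing with $\Gamma_{\mathrm s}=0$, is exactly what justifies the stated estimate.
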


\begin{proof}
We first prove that $\mathrm{T}_s[\Gamma_{\mathrm{s}}]\in B_s$ for any $\Gamma_{\mathrm{s}}\in B_s$. Applying \eqref{344}, \eqref{107} and \eqref{113}, we obtain
\begin{align*}
\left\|\left(g-\alpha^2(\Diff)^2\right)^{-1}G(\boldsymbol{\tau},u,\Gamma_{\mathrm{s}})\right\|_{H^k (\R)}\leq C\left(\delta^{1-k} e^{-\frac {2(1-|\boldsymbol{\tau}|)}\delta}+\delta^{k+\frac{5}{2}}\right).
\end{align*}
Choosing $\delta$ small enough such that $C\left(\delta^{1-k} e^{-\frac {2(1-|\boldsymbol{\tau}|)}\delta}+\delta^{k+\frac{5}{2}}\right)\leq \delta^{k}$, hence $\mathrm{T}_s[\Gamma_{\mathrm{s}}]\in B_s$.

Next we prove $\mathrm{T}_s$ is a contraction on $B_s$. Applying \eqref{113} yields
\begin{align*}
\|\mathrm{T}_s[\Gamma_{\mathrm{s},1}]-\mathrm{T}_s[\Gamma_{\mathrm{s},2}]\|_{H^k (\R)}&\leq C\|D_{\Gamma_{\mathrm{s}}} G(\boldsymbol{\tau}, u, \xi) [\Gamma_{\mathrm{s},1}-\Gamma_{\mathrm{s},2}]\|_{H^{k-2} (\R)}\\
&\leq C\delta^{\frac 12-k} \left(\delta^{2k+2} + \delta^{\frac 12} e^{-\frac {2(1-|\boldsymbol{\tau}|)}\delta} \right)\|\Gamma_{\mathrm{s},1}-\Gamma_{\mathrm{s},2}\|_{H^k (\R)}.
\end{align*}
Finally, choosing $\delta$ small enough so that
$$C\delta^{\frac 12-k} \left(\delta^{2k+2} + \delta^{\frac 12} e^{-\frac {2(1-|\boldsymbol{\tau}|)}\delta} \right)<1,$$
then $\mathrm{T}_s$ is a contraction on $B_s$. By the contraction mapping principle, $\mathrm{T}_s$ possesses a fixed point $\widetilde{\Gamma}_{\mathrm{s}}[u]$ in $B_s$.
Moreover, we have
\begin{align*}
\left\|\widetilde{\Gamma}_{\mathrm{s}}[u]\right\|_{H^k(\R)}\lesssim \|G(\boldsymbol{\tau},u, 0)\|_{H^{k-2} (\R)}&\lesssim\|G(\boldsymbol{\tau},0, 0)\|_{H^{k-2} (\R)}+\|D_u G(\boldsymbol{\tau},\xi, 0)u\|_{H^{k-2} (\R)}\\
&\leq \widetilde C\left(\delta^{1-k} e^{-\frac {2(1-|\boldsymbol{\tau}|)}\delta}+\delta^{\frac 12-k}\|u\|_{H^k (\slab_\delta)}^2\right).
\end{align*}
\end{proof}
\begin{remark}\label{rmk1}
For $v\in H^k (\slab_\delta)$, define the operator $D_u\widetilde\Gamma_{\mathrm{s}}[u]:H^k (\slab_\delta)\to H^k (\R)$ as
$$D_u\widetilde{\Gamma}_{\mathrm{s}}[u]v=\frac{\mathrm{d}}{\mathrm{d}t}\widetilde{\Gamma}_{\mathrm{s}}[u+tv]\bigg|_{t=0}.$$
By the above definition, differentiating both sides of equation \eqref{perturbed Gamma eq} with respect to $u$ gives
$$\left(g-\alpha^2(\Diff)^2\right)D_u\widetilde{\Gamma}_{\mathrm{s}}[u]v+D_uG\left(\boldsymbol{\tau},u,\widetilde{\Gamma}_{\mathrm{s}}[u]\right)[v]+D_{\Gamma_{\mathrm{s}}}G\left(\boldsymbol{\tau},u,\widetilde{\Gamma}_{\mathrm{s}}[u]\right)\left[\widetilde{\Gamma}_{\mathrm{s}}[u]v\right]=0.$$
By the same method as in Proposition \ref{prop5.1} and \eqref{107}, we obtain
\begin{equation}\label{1908}
\left\|D_u \widetilde{\Gamma}_{\mathrm{s}}[u] v\right\|_{H^{k} (\R)}\lesssim \left(\delta^{\frac 12-k}\|u\|_{H^k (\slab_\delta)} + \delta^{\frac 34-k} e^{-\frac {1-|\boldsymbol{\tau}|}\delta} \right) \|v\|_{H^k (\slab_\delta)}.
\end{equation}
\end{remark}
Now we consider the outer problem \eqref{out}. Let 
$$L_{out}[\psi]:=-\Delta\psi(x)+\psi(x),$$
then the outer problem can be written as
\begin{equation}\label{121}
\begin{aligned}
\begin{cases}
L_{out}[\psi]=f_{out}[\phi_1,\dots,\phi_n,\psi,\Gamma_{\mathrm{s}}],\;&\text{in }\slab_\delta,\\
\psi(x)=0,\;&\text{on }\partial\slab_\delta,
\end{cases}
\end{aligned}
\end{equation}
where
\begin{align*}
&f_{out}[\phi_1,\dots,\phi_n,\psi,\Gamma_{\mathrm{s}}]\\
&=\cfrac{2\delta}{R}\sum_{i=1}^n\nabla\eta\left(\frac{\delta}{R}\left|x-\left(0,\frac{\tau_i}{\delta}\right)\right|\right)\cdot\nabla\phi_i(x)+\cfrac{\delta^2}{R^2}\sum_{i=1}^n\Delta\eta\left(\frac{\delta}{R}\left|x-\left(0,\frac{\tau_i}{\delta}\right)\right|\right)\phi_i(x)\\
&\quad\;\,-\sum_{i=1}^n\eta\left(\frac{\delta}{R}\left|x-\left(0,\frac{\tau_i}{\delta}\right)\right|\right)\left(\gamma^\prime\left(\sum_{i=1}^n\sigma_iU_i\right)-\gamma^\prime\left(U_i\right)\right)\phi_i(x)\\
&\quad\;\,-\left(1-\sum_{i=1}^n\eta\left(\frac{\delta}{R}\left|x-\left(0,\frac{\tau_i}{\delta}\right)\right|\right)\right)F(\boldsymbol{\tau}, u, \Gamma_{\mathrm{s}})(x)\\
&\quad\;\,-\left(1-\sum_{i=1}^n\eta\left(\frac{\delta}{R}\left|x-\left(0,\frac{\tau_i}{\delta}\right)\right|\right)\right)\left(\gamma^\prime\left(\sum_{i=1}^n\sigma_iU_i\right)-1\right)\psi(x).
\end{align*}
then
$$\psi=(L_{out})^{-1}\left(f_{out}[\phi_1,\dots,\phi_n,\psi,\Gamma_{\mathrm{s}}]\right).$$

First we estimate $F(\boldsymbol{\tau}, u, \Gamma_{\mathrm{s}})$. Using $\gamma(0)=0$ and $\gamma'(0)=1$, we have $\gamma(s)=s+O(s^2)$ for $|s|$ sufficiently small. From the definition of $F$, the linear terms cancel out, then
\begin{equation*}
\begin{aligned}
F(\boldsymbol{\tau}, 0, 0)&=\gamma(\omega_{\boldsymbol{\tau}})-\sum_{i=1}^n\sigma_i\gamma(U_i)+\sum_{i=1}^n\sigma_i(U_i)_\mathrm{bc}\\
&\lesssim\sum_{i\neq j}\left|U_iU_j\right|+\sum_{i=1}^n\sum_{j=1}^n\left|U_i(U_j)_\mathrm{bc}\right|+\sum_{i=1}^n\sum_{j=1}^n\left|(U_i)_\mathrm{bc}(U_j)_\mathrm{bc}\right|.
\end{aligned}
\end{equation*}
Let $|\boldsymbol{\tau}|:=\max\{|\tau_1|,\dots,|\tau_n|\}$, $d_m:=\displaystyle\min_{1\leq i< j\leq n}|\tau_i-\tau_j|$ and $\mathcal{S}_i:=\left\{x\in\slab_\delta\;\left|\;\left|x_2-\frac {\tau_i}\delta\right|\leq\frac{d_m}{2\delta}\right. \right\}$. According to Assumption \ref{assumption:A} and Lemma \ref{prop:sign of Uy}, for any $j\ne i$, $0\le m\le k-2$ and $x \in \mathcal{S}_i$, 
$$\left|\partial^m\left(U_iU_j\right)\right|\lesssim\left(1+\left|x-\frac{\tau_i}\delta e_2\right|\right)^{-\frac 12}\left|x-\frac{\tau_j}\delta e_2\right|^{-\frac 12}\e^{-\left(\left|x-\frac {\tau_i}\delta e_2\right|+\left|x-\frac{\tau_j}\delta e_2\right|\right)}.$$
For $|x_1|\leq\frac 4\delta$ and $\frac{|\tau_i-\tau_j|}\delta\leq\frac{2}{\delta}$, from the convexity of the function $f(x)=\sqrt{a^2+x^2}$
\begin{align*}
\left|x-\frac{\tau_i}\delta e_2\right|+\left|x-\frac{\tau_j}\delta e_2\right|&=\sqrt{x_1^2+\left(x_2-\frac {\tau_i}\delta\right)^2}+\sqrt{x_1^2+\left(x_2-\frac {\tau_j}\delta\right)^2}\\
&\geq\sqrt{4x_1^2+\left(\frac {\tau_i-\tau_j}{\delta}\right)^2}.
\end{align*}
Since
\begin{align*}
\sqrt{4x_1^2+\left(\frac{\tau_i-\tau_j}{\delta}\right)^2}-\left|\frac{\tau_i-\tau_j}{\delta}\right|=\frac{4x_1^2}{\sqrt{4x_1^2+\left(\frac{\tau_i-\tau_j}{\delta}\right)^2}+\left|\frac{\tau_i-\tau_j}{\delta}\right|}\geq\frac{2\delta x_1^2}{\sqrt{17}+1},
\end{align*}
we have
$$\left|x-\frac{\tau_i}\delta e_2\right|+\left|x-\frac{\tau_j}\delta e_2\right|\geq\left|\frac{\tau_i-\tau_j}{\delta}\right|+\frac{2\delta x_1^2}{\sqrt{17}+1}.$$
Therefore,
\begin{align*}
&\int_{\mathcal{S}_i}\left(1+\left|x-\tfrac{\tau_i}\delta e_2\right|\right)^{-1} \left|x-\tfrac{\tau_j}\delta e_2\right|^{-1} \e^{-2\left(\left|x-\frac {\tau_i}\delta e_2\right|+\left|x-\frac{\tau_j}\delta e_2\right|\right)}\,dx\\
&\lesssim\int_{\left\{|x_1|\leq\frac 4\delta\right\}}\int_{\frac 1\delta(\tau_i-\frac{d_m}{2})}^{\frac 1\delta(\tau_i+\frac{d_m}{2})}\left(1+\left|x-\tfrac{\tau_i}\delta e_2\right|\right)^{-1} \left|x-\tfrac{\tau_j}\delta e_2\right|^{-1}\e^{-2\left(\frac{|\tau_i-\tau_j|}\delta+\frac{2}{\sqrt{17}+1} \delta x_1^2\right)}\,dx_2\,dx_1\\
&\quad\,+\int_{\left\{|x_1|\geq\frac 4\delta\right\}}\int_{\frac 1\delta(\tau_i-\frac{d_m}{2})}^{\frac 1\delta(\tau_i+\frac{d_m}{2})}\left(1+\left|x-\tfrac{\tau_i}\delta e_2\right|\right)^{-1} \left|x-\tfrac{\tau_j}\delta e_2\right|^{-1}\e^{-2\left(\frac{4}\delta+\left|x-\frac{\tau_j}\delta e_2\right|\right)}\,dx_2\,dx_1\\
&\lesssim\int_{\left\{|x_1|\leq\frac 4\delta\right\}}\int_0^{\frac{d_m}{2\delta}} \delta(1+x_2)^{-1}\e^{-2\left(\frac{|\tau_i-\tau_j|}\delta+\frac{2}{\sqrt{17}+1} \delta x_1^2\right)}\,dx_2\,dx_1+\delta\e^{-\frac{8}\delta}\\
&\lesssim\delta^{\frac 12}\left|\log\delta\right|\e^{-\frac{2|\tau_i-\tau_j|}{\delta}}\lesssim\delta^{\frac 12}\left|\log\delta\right|\e^{-\frac{2d_m}{\delta}}.
\end{align*}
This implies
\begin{equation}\label{E:F0}
\|U_iU_j\|_{H^{k-2}(\mathcal{S}_i)}^2\lesssim\delta^{\frac 12}|\log\delta|e^{-\frac{2d_m}\delta}.
\end{equation}
By \eqref{2031}, \eqref{2032} and the derivation of \eqref{E:F0}, it holds that
\begin{align*}
&\|U_i(U_j)_\mathrm{bc}\|_{H^{k-2}(\mathcal{S}_i)}\\
&\lesssim\|U_iU_j\left(\,\cdot\,,\tfrac2\delta-\,\cdot\,\right)\|_{H^{k-2}(\mathcal{S}_i)}+\|U_iU_j\left(\,\cdot\,,-\tfrac2\delta-\,\cdot\,\right)\|_{H^{k-2}(\mathcal{S}_i)}+\|U_i\mathcal{R}_j\|_{H^{k-2}(\mathcal{S}_i)}\\
&\lesssim\delta^{\frac 14}\left|\log\delta\right|^{\frac{1}{2}}\e^{-\frac{2-\tau_i-\tau_j}\delta}+\delta^{\frac 14}\left|\log\delta\right|^{\frac{1}{2}}\e^{-\frac{2+\tau_i+\tau_j}\delta}+\delta^{\frac{1}{4}}\e^{-\frac{2-2|\tau_j|}{\delta}}\lesssim\delta^{\frac 14}\left|\log\delta\right|^{\frac{1}{2}}\e^{-\frac{2-2|\boldsymbol{\tau}|}{\delta}}.
\end{align*}
The remaining terms are estimated similarly, finally we have 
\begin{equation}\label{1551}
\|F(\boldsymbol{\tau},0,0)\|_{H^{k-2}(\slab_\delta)}\lesssim \delta^{\frac 14}\left|\log\delta\right|^{\frac{1}{2}}\e^{-\frac{d_m}\delta}+\delta^{\frac 14}\left|\log\delta\right|^{\frac{1}{2}}\e^{-\frac {2(1-|\boldsymbol{\tau}|)}\delta}.
\end{equation}

Next we turn to $D_uF$ and $D_{\Gamma_{\mathrm{s}}}F$. Here the operators $D_uF$ and $D_{\Gamma_{\mathrm{s}}}F$ are defined respectively as
$$D_uF(\boldsymbol{\tau},u,\Gamma_{\mathrm{s}})[v]=\frac{\mathrm{d}}{\mathrm{d}t}F(\boldsymbol{\tau},u+tv,\Gamma_{\mathrm{s}})\bigg|_{t=0},\qquad v\in H^k(\slab_\delta)$$
and
$$D_{\Gamma_{\mathrm{s}}}F(\boldsymbol{\tau},u,\Gamma_{\mathrm{s}})[\Lambda_{\mathrm{s}}]=\frac{\mathrm{d}}{\mathrm{d}t}F(\boldsymbol{\tau},u,\Gamma_{\mathrm{s}}+t\Lambda_{\mathrm{s}})\bigg|_{t=0},\qquad \Lambda_{\mathrm{s}}\in H^k(\R).$$
For any $v \in H_{\mathrm{e}}^k(\slab_\delta)$,
\begin{equation*}
\begin{split}
&D_u F(\boldsymbol{\tau}, u, \Gamma_{\mathrm{s}}) v= \left(\left|1+\Gamma^\prime(\delta \,\cdot\, )\right|^2 \gamma'(\omega_{\boldsymbol{\tau}}+u)- \gamma^\prime\left(\sum_{i=1}^n\sigma_iU_i\right)\right) v \\
&=\left(2 \Gamma_1' (\delta \,\cdot\,)+ |\Gamma'(\delta \,\cdot\,)|^2\right)\gamma'(\omega_{\boldsymbol{\tau}}+u)v\\
&\quad\,\,+ \left(\left(u - \sum_{i=1}^n\sigma_i(U_i)_\mathrm{bc}\right)\displaystyle\int_0^1 \gamma''\left(\sum_{i=1}^n\sigma_iU_i + s\left(u - \sum_{i=1}^n\sigma_i(U_i)_\mathrm{bc}\right) \right) \, \diff s\right) v. 
\end{split}
\end{equation*}
By the the scaling identity $\| f(\delta \,\cdot\,) \|_{\dot H^s(\slab_\delta)} = \delta^{s-1} \| f \|_{\dot H^s(\slab_1)}$ and the Sobolev trace theorem, it holds that
$$\left \| 2 \Gamma_1^\prime(\delta \,\cdot\,) + |\Gamma^\prime(\delta \,\cdot\,) |^2 \right\|_{H^{k-\frac{1}{2}}(\slab_\delta)} \lesssim \delta^{-1} \|\Gamma_{\mathrm{s}}\|_{H^k(\mathbb{R})}.$$
When $l>1$, $H^l(\slab_\delta)$ is an algebra, thus for $k\geq\frac{3}{2}$,
\begin{equation}\label{141}
\begin{aligned}
\|D_u F(\boldsymbol{\tau}, u, \Gamma_{\mathrm{s}}) v\|_{H^{k-2} (\slab_\delta)}&\lesssim\|D_u F(\boldsymbol{\tau}, u, \Gamma_{\mathrm{s}}) v\|_{H^{k-\frac{1}{2}} (\slab_\delta)}\\
&\lesssim\left( \delta^{-1} \|\Gamma_{\mathrm{s}}\|_{H^{k}(\R)} + \|u\|_{H^{k}  (\slab_\delta)} + \sum_{i=1}^n\|(U_i)_\mathrm{bc}\|_{H^{k}  (\slab_\delta)}\right) \|v\|_{H^k (\slab_\delta)}.
\end{aligned}
\end{equation}
Similarly regarding $D_{\Gamma_{\mathrm{s}}} F$, we have for any $\Lambda_{\mathrm{s}} \in H_{\mathrm{e}}^k(\mathbb{R})$ that 
$$D_{\Gamma_{\mathrm{s}}} F (\boldsymbol{\tau}, u, \Gamma_{\mathrm{s}})\Lambda_{\mathrm{s}} = 2 \left( \Lambda_1' (\delta \,\cdot\,) + \Gamma' (\delta \,\cdot\,) \cdot \Lambda' (\delta \,\cdot\,) \right) \gamma (\omega_{\boldsymbol{\tau}}+u),$$
where $\Lambda = \Lambda_1 + \I \Lambda_2$ is the complex holomorphic function obtained by $\Lambda_{\mathrm{s}}$. Similarly, by the Sobolev trace theorem and the scaling identity above, for $k\geq\frac{3}{2}$,
\begin{equation}\label{142}
\|D_{\Gamma_{\mathrm{s}}} F (\boldsymbol{\tau}, u, \Gamma_{\mathrm{s}}) \Lambda_{\mathrm{s}}\|_{H^{k-2} (\slab_\delta)}\lesssim\|D_{\Gamma_{\mathrm{s}}} F (\boldsymbol{\tau}, u, \Gamma_{\mathrm{s}}) \Lambda_{\mathrm{s}}\|_{H^{k-\frac{1}{2}} (\slab_\delta)}\lesssim\delta^{-1} \|\Lambda_{\mathrm{s}}\|_{H^{k}(\R)}.
\end{equation}

With these estimates, for given solutions $\phi_i$, $i=1,\dots,n$ of the inner problems, we can find $\psi$ solving \eqref{121}.
\begin{prop}\label{prop5.2}
Assume $\|\phi_i\|_{H^{k}(\slab_\delta)}\leq\delta^{k+1}$ and $\left\|\phi_i\right\|_{C^{1,\alpha}(B_{i,2})}\leq\delta^{k+1}$ for all $i=1,\dots,n$. Then there exist constants $\widetilde C>0$ and $\delta_0>0$ such that for 
$\boldsymbol{\tau} \in P_{\varepsilon}$ and $\delta\in(0,\delta_0)$, the operator
$$\mathrm{T}_{out}[\psi]:=(L_{out})^{-1}f_{out}[\phi_1,\dots,\phi_n,\psi,\widetilde{\Gamma}_{\mathrm{s}}[u]]$$
has a fixed point $\widetilde{\psi}[\phi_1,\dots,\phi_n]$ in the set $$B_{out}:=\{\psi \in H_{\mathrm{e}}^k(\slab_\delta) \;|\;\|\psi\|_{H^k(\slab_\delta)}\leq \delta^{k+1}\},$$
which satisfies
\begin{align*}
\left\|\widetilde{\psi}[\phi_1,\dots,\phi_n]\right\|_{H^{k}(\slab_\delta)}\lesssim\delta^{\frac{1}{2}}\sum_{i=1}^n\left\|\phi_i\right\|_{H^{k}(\slab_\delta)}+\delta^{\frac 14} | \log \delta|^{\frac{1}{2}}e^{-\frac {d_m}\delta}+\delta^{-k}e^{-\frac {2(1-|\boldsymbol{\tau}|)}\delta},
\end{align*}
and the following pointwise estimate
\begin{align*}
&|\widetilde{\psi}[\phi_1,\dots,\phi_n](x)|\\
&\leq \widetilde C \begin{cases}
\delta^{-3/2}\e^{-\frac{R}{2\delta}}\left(e^{-\frac {d_m}\delta}+\delta^{-k}e^{-\frac {2(1-|\boldsymbol{\tau}|)}\delta}+\delta^{-\frac{1}{2}-k}\sum_{i=1}^n\|\phi_i\|_{H^k (\slab_\delta)}^2\right)\\
+\delta^{-3/2}\e^{-\frac{R}{2\delta}}\delta\sum_{i=1}^n\left\|\phi_i\right\|_{C^{1,\alpha}(B_{i,2})}+\e^{-\frac{d-2R}{\delta}}\sum_{i=1}^n\left\|\phi_i\right\|_{C^{1,\alpha}(B_{i,2})},&x\in\cup_{i=1}^nB_{i,\frac{1}{2}},\\[4mm]
e^{-\frac {d_m}\delta}+\delta^{-k}e^{-\frac {2(1-|\boldsymbol{\tau}|)}\delta}+\delta^{-\frac{1}{2}-k}\sum_{i=1}^n\|\phi_i\|_{H^k (\slab_\delta)}^2\\
+\delta\sum_{i=1}^n\left\|\phi_i\right\|_{C^{1,\alpha}(B_{i,2})}+\e^{-\frac{d-2R}{\delta}}\sum_{i=1}^n\left\|\phi_i\right\|_{C^{1,\alpha}(B_{i,2})},&x\in \slab_\delta\backslash \cup_{i=1}^nB_{i,\frac{1}{2}},
\end{cases}
\end{align*}
where $B_{i,\frac{1}{2}}:=\left\{x\in\R^2\;\left|\;\left|x-\left(0,\frac{\tau_i}{\delta}\right)\right|\leq \frac{R}{2\delta}\right.\right\}$.
\end{prop}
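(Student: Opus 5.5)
We will prove Proposition \ref{prop5.2} by the contraction mapping principle applied to $\mathrm{T}_{out}$ on $B_{out}$, with $u=\sum_i\eta_i\phi_i+\psi$ and $\eta_i:=\eta\bigl(\frac{\delta}{R}|x-(0,\frac{\tau_i}{\delta})|\bigr)$. The first step is to check that $u$ satisfies the hypothesis of Proposition \ref{prop5.1}, namely $\|u\|_{H^k}\lesssim\delta^{k+1}$ up to a constant, which can be absorbed by shrinking the ball slightly. Then $\widetilde\Gamma_{\mathrm s}[u]$ is defined, with
\[
\|\widetilde\Gamma_{\mathrm s}[u]\|_{H^k}\lesssim \delta^{1-k}e^{-\frac{2(1-|\boldsymbol\tau|)}{\delta}}+\delta^{\frac12-k}\|u\|_{H^k}^2 .
\]
Next we estimate $f_{out}$ term by term in $H^{k-2}(\slab_\delta)$ and apply the outer linear estimate $\|\psi\|_{H^k}\lesssim\|h\|_{H^{k-2}}$.

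The cutoff terms contribute
\[
\frac{\delta}{R}\nabla\eta\cdot\nabla\phi_i+\frac{\delta^2}{R^2}\Delta\eta\,\phi_i=O(\delta)\|\phi_i\|_{H^{k}} .
\]
The term $\eta_i\bigl(\gamma'(\sum\sigma_jU_j)-\gamma'(U_i)\bigr)\phi_i$ is supported in $B_{i,2}$. There, for $j\ne i$, we have $|U_j|\lesssim e^{-(d_m-2R)/\delta}$, so this term is exponentially small times $\|\phi_i\|$. On the support of $1-\sum\eta_i$, every $U_j$ is $O(e^{-R/\delta})$, so the term $(1-\sum\eta_i)(\gamma'(\sum\sigma_jU_j)-1)\psi$ is $\lesssim e^{-R/\delta}\|\psi\|_{H^k}$.

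For $F(\boldsymbol\tau,u,\widetilde\Gamma_{\mathrm s}[u])$, write
\[
F(\boldsymbol\tau,u,\Gamma_{\mathrm s})=F(\boldsymbol\tau,0,0)+\int_0^1 D_uF\,[u]+D_{\Gamma_{\mathrm s}}F\,[\Gamma_{\mathrm s}]\,ds .
\]
We then use \eqref{1551}, \eqref{141}, \eqref{142} and \eqref{2023}. This produces $\delta^{\frac14}|\log\delta|^{\frac12}e^{-d_m/\delta}$, $\delta^{\frac14}|\log\delta|^{\frac12}e^{-2(1-|\boldsymbol\tau|)/\delta}$, $\delta^{-1}\|\widetilde\Gamma_{\mathrm s}\|_{H^k}\lesssim \delta^{-k}e^{-2(1-|\boldsymbol\tau|)/\delta}+\delta^{-\frac12-k}\|u\|^2$, and quadratic terms $\|u\|^2$.

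Summing, $\mathrm T_{out}$ maps $B_{out}$ into itself, because every contribution is either $o(\delta^{k+1})$ or exponentially small. For the contraction, $\mathrm T_{out}[\psi_1]-\mathrm T_{out}[\psi_2]$ involves only the $\psi$-dependent terms. These are the direct term with factor $e^{-R/\delta}$, and $F$ through $D_uF$, whose coefficient $\lesssim\delta^{-1}\|\widetilde\Gamma_{\mathrm s}\|+\|u\|+e^{-(1-|\boldsymbol\tau|)/\delta}=o(1)$. They also include the dependence through $\widetilde\Gamma_{\mathrm s}[u]$, controlled via \eqref{1908} and \eqref{142}, with coefficient $\delta^{-1}(\delta^{\frac12-k}\delta^{k+1}+\dots)=o(1)$. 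The resulting Lipschitz constant is $<1$ for small $\delta$. The stated $H^k$ bound then follows: we absorb the $\psi$-terms on the left and bound the $\phi_i$-contributions by $\delta^{\frac12}\sum\|\phi_i\|$, using the factor $\delta$ from the cutoff derivatives.

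For the pointwise estimate, we use the representation \eqref{134}, $\psi(x)=-\int\widetilde G(x,\xi)f_{out}(\xi)\,d\xi$, together with the decay $\widetilde G(x,\xi)\lesssim |x-\xi|^{-1/2}e^{-|x-\xi|}$ away from the diagonal, with a logarithmic singularity near it. The key structural point is that, apart from the $F$- and $\psi$-pieces, the source is supported in the annuli $\frac R\delta\le|x-(0,\frac{\tau_i}\delta)|\le\frac{2R}\delta$. The interaction piece is supported in $B_{i,2}$ and weighted by $e^{-(d-2R)/\delta}$. We bound these pieces in $L^\infty$ by $\delta\|\phi_i\|_{C^{1,\alpha}(B_{i,2})}$, respectively $e^{-(d-2R)/\delta}\|\phi_i\|_{C^{1,\alpha}}$. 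The $F$-part and the $(1-\sum\eta_i)$-part we bound in $L^2$ or $L^\infty$ via Sobolev embedding, using the $H^k$ bounds just obtained. For $x\in B_{i,\frac12}$, the distance from $x$ to the annuli and to the support of $1-\sum\eta_i$ is at least $R/(2\delta)$. Applying Cauchy–Schwarz, the kernel gives $e^{-R/(2\delta)}$ times a polynomial loss from the area $\sim\delta^{-2}$ of the region, which yields the prefactor $\delta^{-3/2}$. For $x$ outside $\cup_i B_{i,\frac12}$ we simply use $\int|\widetilde G(x,\cdot)|\lesssim1$.

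The main obstacle is bookkeeping in the contraction step. The $\psi$-dependence enters through $\widetilde\Gamma_{\mathrm s}[u]$, and $\delta^{-1}$ losses from the scaling of $\Gamma'(\delta\cdot)$ must be compensated. We will handle this by keeping $\|u\|_{H^k}\le C\delta^{k+1}$ throughout, which makes $\delta^{-\frac12-k}\|u\|$ small, and by checking carefully that no negative power of $\delta$ multiplies an $O(1)$ quantity.
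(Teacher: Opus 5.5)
Your proposal is correct and follows essentially the paper's own proof. It builds a contraction for $\mathrm{T}_{out}$ on $B_{out}$ from the same term-by-term $H^{k-2}$ bounds: the factor $\delta$ from the cutoff derivatives, \eqref{338}, \eqref{335}, \eqref{1551}, \eqref{141}, \eqref{142}, Proposition~\ref{prop5.1} and \eqref{1908}. It then uses the representation \eqref{134}, in which the $R/(2\delta)$ separation of $B_{i,\frac12}$ from the annuli and from the support of $1-\sum_i\eta_i$ produces the prefactor $\delta^{-3/2}e^{-R/(2\delta)}$.

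Two points should be made explicit. First, the interaction term is small for even $i$ (where $\sigma_i=-1$) only because $\gamma'$ is even, so that $\gamma'(\sigma_iU_i)=\gamma'(U_i)$. Second, the factor $\delta^{\frac12}$ in the final $H^k$ bound comes from the $\widetilde\Gamma_{\mathrm{s}}$-contribution $\delta^{-\frac12-k}\|u\|_{H^k}^2\lesssim\delta^{\frac12}\|u\|_{H^k}$ (using $\|u\|_{H^k}\lesssim\delta^{k+1}$), not from the cutoff terms, which give the better factor $\delta$.
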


\begin{proof}
We first prove that $\mathrm{T}_{out}[\psi]\in B_{out}$ for any $\psi\in B_{out}$. From the invertibility of $L_{out}$,
\begin{align*}
&\|\mathrm{T}_{out}[\psi]\|_{H^k (\slab_\delta)}\lesssim\left\|f_{out}[\phi_1,\dots,\phi_n,\psi,\widetilde{\Gamma}_{\mathrm{s}}[u]]\right\|_{H^{k-2} (\slab_\delta)}\\
&\lesssim\delta\sum_{i=1}^n\left\|\nabla\eta\left(\frac{\delta}{R}\left|x-\left(0,\frac{\tau_i}{\delta}\right)\right|\right)\cdot\nabla\phi_i(x)\right\|_{H^{k-2} (\slab_\delta)}\\
&\quad\;+\delta^2\sum_{i=1}^n\left\|\Delta\eta\left(\frac{\delta}{R}\left|x-\left(0,\frac{\tau_i}{\delta}\right)\right|\right)\phi_i(x)\right\|_{H^{k-2} (\slab_\delta)}\\
&\quad\;+\sum_{i=1}^n\left\|\eta\left(\frac{\delta}{R}\left|x-\left(0,\frac{\tau_i}{\delta}\right)\right|\right)\left(\gamma^\prime\left(\sum_{j=1}^n\sigma_jU_j\right)-\gamma^\prime\left(U_i\right)\right)\phi_i(x)\right\|_{H^{k-2} (\slab_\delta)}\\
&\quad\;+\left\|\left(1-\sum_{i=1}^n\eta\left(\frac{\delta}{R}\left|x-\left(0,\frac{\tau_i}{\delta}\right)\right|\right)\right)F(\boldsymbol{\tau}, u, \Gamma_{\mathrm{s}})(x)\right\|_{H^{k-2} (\slab_\delta)}\\
&\quad\;+\left\|\left(1-\sum_{i=1}^n\eta\left(\frac{\delta}{R}\left|x-\left(0,\frac{\tau_i}{\delta}\right)\right|\right)\right)\left(\gamma^\prime\left(\sum_{j=1}^n\sigma_jU_j\right)-1\right)\psi(x)\right\|_{H^{k-2} (\slab_\delta)}.
\end{align*}
Since the support of $\eta\left(\frac{\delta}{R}\left|x-\left(0,\frac{\tau_i}{\delta}\right)\right|\right)$ is contained in the ball
$$B_{i,2}=\left\{x\in\R^2\;\left|\;\left|x-\left(0,\frac{\tau_i}{\delta}\right)\right|\leq \frac{2R}{\delta}\right.\right\},$$
using the evenness of $\gamma'$ and the exponential decay of
$U$ and $U_{\mathrm{bc}}$, we have
$$\gamma^\prime\left(\sum_{j=1}^n\sigma_jU_j\right)-\gamma^\prime\left(U_i\right)=\gamma^\prime\left(\sum_{j=1}^n\sigma_jU_j\right)-\gamma^\prime\left(\sigma_iU_i\right)\lesssim
\sum_{j\neq i}|U_j|\quad \text{in }B_{i,2}.$$
Then
\begin{align*}
&\left\|\eta\left(\frac{\delta}{R}\left|x-\left(0,\frac{\tau_i}{\delta}\right)\right|\right)\left(\gamma^\prime\left(\sum_{j=1}^n\sigma_jU_j\right)-\gamma^\prime\left(U_i\right)\right)\phi_i\right\|_{H^{k-2} (\slab_\delta)}\\
&\lesssim\left(\sum_{j\neq i}\left\|U_j\right\|_{H^{k} (B_{i,2})}\right)\left\|\phi_i\right\|_{H^{k}(\slab_\delta)}.
\end{align*}
Thus by Lemma \ref{prop:sign of Uy},
\begin{equation}\label{338}
\left\|\eta\left(\frac{\delta}{R}\left|x-\left(0,\frac{\tau_i}{\delta}\right)\right|\right)\left(\gamma^\prime\left(\sum_{j=1}^n\sigma_jU_j\right)-\gamma^\prime\left(U_i\right)\right)\phi_i\right\|_{H^{k-2} (\slab_\delta)}\lesssim\e^{-\frac{d-2R}{\delta}}\left\|\phi_i\right\|_{H^{k}(\slab_\delta)},
\end{equation}
where $d:=\min\{2-2|\boldsymbol{\tau}|,d_m\}$ and take $R$ small such that $d-2R>0$.

Similarly, since $1-\sum_{i=1}^n\eta\left(\frac{\delta}{R}\left|\xi-\left(0,\frac{\tau_i}{\delta}\right)\right|\right)=0$ in all ball
$$B_{i,1}:=\left\{x\in\R^2\;\left|\;\left|x-\left(0,\frac{\tau_i}{\delta}\right)\right|\leq \frac{R}{\delta}\right.\right\},\quad i=1,\dots,n,$$
using
$$\gamma^\prime\left(\sum_{j=1}^n\sigma_jU_j\right)-1\lesssim\sum_{j=1}^n|U_j|\quad\text{in }\slab_\delta\backslash\cup_{i=1}^nB_{i,1},$$
we have
\begin{equation}\label{335}
\left\|\left(1-\sum_{i=1}^n\eta\left(\frac{\delta}{R}\left|x-\left(0,\frac{\tau_i}{\delta}\right)\right|\right)\right)\left(\gamma^\prime\left(\sum_{j=1}^n\sigma_jU_j\right)-1\right)\psi(x)\right\|_{H^{k-2} (\slab_\delta)}\lesssim\e^{-\frac {R}\delta}\left\|\psi\right\|_{H^{k}(\slab_\delta)}.
\end{equation}

Thus, by \eqref{1551}, \eqref{141}, \eqref{142} and Proposition \ref{prop5.1}
$$\|\mathrm{T}_{out}[\psi]\|_{H^k (\slab_\delta)}\lesssim\delta\sum_{i=1}^n\left\|\phi_i\right\|_{H^{k}(\slab_\delta)}+\delta^{\frac 14} | \log \delta|^{\frac{1}{2}}e^{-\frac {d_m}\delta}+\delta^{-k}e^{-\frac {2(1-|\boldsymbol{\tau}|)}\delta}+\delta^{-\frac{1}{2}-k}\|u\|_{H^k (\slab_\delta)}^2$$
From the above estimates and $\|\phi_i\|_{H^{k}(\slab_\delta)}\leq\delta^{k+1}$ for all $i=1,\dots,n$, choosing $\delta$ small, then $\mathrm{T}_{out}[\psi]\in B_{out}$.

Next we prove $\mathrm{T}_{out}$ is a contraction on $B_{out}$. By \eqref{141}, \eqref{142}, \eqref{1908} and \eqref{335}
\begin{align*}
&\|\mathrm{T}_{out}[\psi_1]-\mathrm{T}_{out}[\psi_2]\|_{H^{k}(\slab_\delta)}\\
&\lesssim\left\|f_{out}[\phi_1,\dots,\phi_n,\psi_1,\widetilde{\Gamma}_{\mathrm{s}}[u]]-f_{out}[\phi_1,\dots,\phi_n,\psi_2,\widetilde{\Gamma}_{\mathrm{s}}[u]]\right\|_{H^{k-2}(\slab_\delta)}\\
&\lesssim\left\|D_u F(\boldsymbol{\tau}, \zeta, \widetilde{\Gamma}_{\mathrm{s}})(\psi_1-\psi_2)\right\|_{H^{k-2}(\slab_\delta)}+\left\|D_{\Gamma_{\mathrm{s}}} F (\boldsymbol{\tau}, u, \zeta)D_u \widetilde{\Gamma}_{\mathrm{s}}[\zeta](\psi_1-\psi_2)\right\|_{H^{k-2}(\slab_\delta)}\\
&\quad\;+\left\|\left(1-\sum_{i=1}^n\eta\left(\frac{\delta}{R}\left|\xi-\left(0,\frac{\tau_i}{\delta}\right)\right|\right)\right)\left(\gamma^\prime\left(\sum_{j=1}^n\sigma_jU_j\right)-1\right)(\psi_1-\psi_2)\right\|_{H^{k-2}(\slab_\delta)}\\
&\lesssim\left(\delta^{-1} \left\|\widetilde{\Gamma}_{\mathrm{s}}\right\|_{H^{k}(\R)} + \|u\|_{H^{k}  (\slab_\delta)} + \sum_{i=1}^n\|(U_i)_\mathrm{bc}\|_{H^{k}  (\slab_\delta)}\right)\|\psi_1-\psi_2\|_{H^{k}(\slab_\delta)}+e^{-\frac {R}\delta}\|\psi_1-\psi_2\|_{H^{k}  (\slab_\delta)}\\
&\quad\;+\delta^{-1} \left\|\widetilde{\Gamma}_{\mathrm{s}}\right\|_{H^{k}(\R)}\left(\delta^{\frac 12-k}\|u\|_{H^k (\slab_\delta)} + \delta^{\frac 34-k} e^{-\frac {1-|\boldsymbol{\tau}|}\delta} \right)\|\psi_1-\psi_2\|_{H^{k}  (\slab_\delta)}.
\end{align*}
From Corollary \ref{C:Ubc} and Proposition \ref{prop5.1}, choosing $\delta$ small, then $\mathrm{T}_{out}$ is a contraction mapping on $B_{out}$. Hence $\mathrm{T}_{out}$ possesses a fixed point $\widetilde{\psi}[\phi_1,\dots,\phi_n]$ in $B_{out}$. Moreover, by the above estimates we have
\begin{align*}
\left\|\widetilde{\psi}[\phi_1,\dots,\phi_n]\right\|_{H^{k}(\slab_\delta)}&\lesssim \left\|f_{out}[\phi_1,\dots,\phi_n,0,\widetilde{\Gamma}_{\mathrm{s}}[\phi_1,\dots,\phi_n,0]]\right\|_{H^{k-2}(\slab_\delta)}\\
&\lesssim\delta^{\frac{1}{2}}\sum_{i=1}^n\left\|\phi_i\right\|_{H^{k}(\slab_\delta)}+\delta^{\frac 14} | \log \delta|^{\frac{1}{2}}e^{-\frac {d_m}\delta}+\delta^{-k}e^{-\frac {2(1-|\boldsymbol{\tau}|)}\delta}.
\end{align*}

Finally, we prove a pointwise estimate for the fixed point $\widetilde{\psi}[\phi_1,\dots,\phi_n]$. Applying \eqref{134} we have
\begin{align}
&\left|\widetilde{\psi}[\phi_1,\dots,\phi_n](x)\right|=\left|(L_{out})^{-1}f_{out}[\phi_1,\dots,\phi_n,\widetilde{\psi},\widetilde{\Gamma}_{\mathrm{s}}[u]](x)\right|\notag\\
&\lesssim\delta\sum_{i=1}^n\left|\displaystyle\int_{\slab_\delta}\widetilde G(x,\xi)\nabla\eta\left(\frac{\delta}{R}\left|\xi-\left(0,\frac{\tau_i}{\delta}\right)\right|\right)\cdot\nabla\phi_i(\xi)\;d\xi\right|\label{5.11}\\
&\quad\;+\delta^2\sum_{i=1}^n\left|\displaystyle\int_{\slab_\delta}\widetilde G(x,\xi)\Delta\eta\left(\frac{\delta}{R}\left|\xi-\left(0,\frac{\tau_i}{\delta}\right)\right|\right)\phi_i(\xi)\;d\xi\right|\label{5.12}\\
&\quad\;+\sum_{i=1}^n\left|\displaystyle\int_{\slab_\delta}\widetilde G(x,\xi)\eta\left(\frac{\delta}{R}\left|\xi-\left(0,\frac{\tau_i}{\delta}\right)\right|\right)\left(\gamma^\prime\left(\sum_{i=1}^n\sigma_iU_i\right)-\gamma^\prime\left(U_i\right)\right)\phi_i(\xi)\;d\xi\right|\label{5.13}\\
&\quad\;+\left|\displaystyle\int_{\slab_\delta}\widetilde G(x,\xi)\left(1-\sum_{i=1}^n\eta\left(\frac{\delta}{R}\left|\xi-\left(0,\frac{\tau_i}{\delta}\right)\right|\right)\right)F(\boldsymbol{\tau}, u, \Gamma_{\mathrm{s}})(\xi)\;d\xi\right|\label{5.14}.
\end{align}

Consider \eqref{5.11}, since the support of $\nabla\eta\left(\frac{\delta}{R}\left|x-\left(0,\frac{\tau_i}{\delta}\right)\right|\right)$ is contained in the annulus
$$A_i:=\left\{x\in\R^2\;\left|\;\frac{R}{\delta}\leq\left|x-\left(0,\frac{\tau_i}{\delta}\right)\right|\leq \frac{2R}{\delta}\right.\right\},$$
by \eqref{2114} and \eqref{133},
\begin{align*}
&\left|\displaystyle\int_{\slab_\delta}\widetilde G(x,\xi)\nabla\eta\left(\frac{\delta}{R}\left|\xi-\left(0,\frac{\tau_i}{\delta}\right)\right|\right)\cdot\nabla\phi_i(\xi)\;d\xi\right|\lesssim\displaystyle\int_{A_i}\left|\widetilde G(x,\xi)\right|\left|\nabla\phi_i(\xi)\right|\;d\xi\\
&\lesssim\left\|\phi_i\right\|_{C^{1,\alpha}(B_{i,2})}\displaystyle\int_{A_i}\sum_{m\in\mathbb Z}\left|G_0\left(x_1-y_1,x_2-y_2+\frac{4m}{\delta}\right)\right|\;d\xi\\
&\quad\,+\left\|\phi_i\right\|_{C^{1,\alpha}(B_{i,2})}\displaystyle\int_{A_i}\sum_{m\in\mathbb Z}\left|G_0\left(x_1-y_1,x_2+y_2-\frac{2}{\delta}+\frac{4m}{\delta}
\right)\right|\;d\xi.
\end{align*}
For $x\in B_{i,\frac{1}{2}}$, $\xi\in A_i$, we have $\frac{R}{2\delta}\leq|x-\xi|\leq\frac{5R}{2\delta}$, then 
$$\left(x_2-\xi_2+\frac{4m}{\delta}\right)^2-(x_2-\xi_2)^2\geq\frac{16m^2}{\delta^2}-\frac{20|m|R}{\delta^2},\qquad m\in\mathbb{Z}.$$
Since $R>0$ small, it follows that
\begin{align*}
&\sqrt{(x_1-\xi_1)^2+\left(x_2-\xi_2+\frac{4m}{\delta}\right)^2}-\sqrt{(x_1-\xi_1)^2+\left(x_2-\xi_2\right)^2}\\
&\geq\sqrt{(x_1-\xi_1)^2+\left(x_2-\xi_2\right)^2+\frac{16m^2}{\delta^2}-\frac{20|m|R}{\delta^2}}-\sqrt{(x_1-\xi_1)^2+\left(x_2-\xi_2\right)^2}\\
&=\frac{16m^2-20|m|R}{\delta^2\left(\sqrt{(x_1-\xi_1)^2+\left(x_2-\xi_2\right)^2+\frac{16m^2}{\delta^2}-\frac{20|m|R}{\delta^2}}+\sqrt{(x_1-\xi_1)^2+\left(x_2-\xi_2\right)^2}\right)}\geq\frac{16m^2}{5\delta R}.
\end{align*}
Using the above inequality and \eqref{012}, if $x\in B_{i,\frac{1}{2}}$,
\begin{align*}
\displaystyle\int_{A_i}\sum_{m\in\mathbb Z}\left|G_0\left(x_1-y_1,x_2-y_2+\frac{4m}{\delta}\right)\right|\;d\xi&\lesssim\delta^{\frac{1}{2}}\displaystyle\int_{A_i}\sum_{m\in\mathbb Z}\e^{-\sqrt{(x_1-\xi_1)^2+\left(x_2-\xi_2+\frac{4m}{\delta}\right)^2}}\;d\xi\\
&\lesssim\delta^{\frac{1}{2}}\displaystyle\int_{A_i}\e^{-|x-\xi|}\sum_{m\in\mathbb Z}\e^{-\frac{16m^2}{5\delta R}}\;d\xi\\
&\lesssim\delta^{-\frac{3}{2}}\e^{-\frac{R}{2\delta}}.
\end{align*}
Similarly,
$$\displaystyle\int_{A_i}\sum_{m\in\mathbb Z}\left|G_0\left(x_1-y_1,x_2+y_2-\frac{2}{\delta}+\frac{4m}{\delta}
\right)\right|\;d\xi\lesssim\delta^{-\frac{3}{2}}\e^{-\frac{R}{2\delta}}.$$
For $x\in \slab_\delta\backslash B_{i,\frac{1}{2}}$, by absolute integrability of $\widetilde G$, we finally obtain
$$\left|\displaystyle\int_{\slab_\delta}\widetilde G(x,\xi)\nabla\eta\left(\frac{\delta}{R}\left|\xi-\left(0,\frac{\tau_i}{\delta}\right)\right|\right)\cdot\nabla\phi_i(\xi)\;d\xi\right|\lesssim
\begin{cases}
\delta^{-3/2}\e^{-\frac{R}{2\delta}}\left\|\phi_i\right\|_{C^{1,\alpha}(B_{i,2})},&x\in B_{i,\frac{1}{2}},\\
\left\|\phi_i\right\|_{C^{1,\alpha}(B_{i,2})},&x\in \slab_\delta\backslash B_{i,\frac{1}{2}}.
\end{cases}$$
Using the same method as in \eqref{5.11}, for \eqref{5.12}
\begin{align*}
\left|\displaystyle\int_{\slab_\delta}\widetilde G(x,\xi)\Delta\eta\left(\frac{\delta}{R}\left|\xi-\left(0,\frac{\tau_i}{\delta}\right)\right|\right)\phi_i(\xi)\;d\xi\right|\lesssim
\begin{cases}
\delta^{-3/2}\e^{-\frac{R}{2\delta}}\left\|\phi_i\right\|_{C^{1,\alpha}(B_{i,2})},&x\in B_{i,\frac{1}{2}},\\
\left\|\phi_i\right\|_{C^{1,\alpha}(B_{i,2})},&x\in \slab_\delta\backslash B_{i,\frac{1}{2}}.
\end{cases}
\end{align*}

For \eqref{5.13}, an estimate similar to \eqref{338} holds
\begin{align*}
&\left|\displaystyle\int_{\slab_\delta}\widetilde G(x,\xi)\eta\left(\frac{\delta}{R}\left|\xi-\left(0,\frac{\tau_i}{\delta}\right)\right|\right)\left(\gamma^\prime\left(\sum_{j=1}^n\sigma_jU_j\right)-\gamma^\prime\left(U_i\right)\right)\phi_i(\xi)\;d\xi\right|\\
&\lesssim\displaystyle\int_{B_{i,2}}|\widetilde G(x,\xi)|\left|\sum_{j\neq i}U_j(\xi)\right||\phi_i(\xi)|\;d\xi\lesssim\e^{-\frac{d-2R}{\delta}}\left\|\phi_i\right\|_{C^{1,\alpha}(B_{i,2})}.
\end{align*}

For \eqref{5.14}, since $1-\sum_{i=1}^n\eta\left(\frac{\delta}{R}\left|\xi-\left(0,\frac{\tau_i}{\delta}\right)\right|\right)=0$ in all $B_{i,1},\; i=1,\dots,n,$
by \eqref{1551}, \eqref{141}, \eqref{142}, Proposition \ref{prop5.1} and \eqref{1536},
\begin{align*}
&\left|\displaystyle\int_{\slab_\delta}\widetilde G(x,\xi)\left(1-\sum_{i=1}^n\eta\left(\frac{\delta}{R}\left|\xi-\left(0,\frac{\tau_i}{\delta}\right)\right|\right)\right)F(\boldsymbol{\tau}, u, \Gamma_{\mathrm{s}})(\xi)\;d\xi\right|\\
&\lesssim\displaystyle\int_{\slab_\delta\backslash\cup_{i=1}^nB_{i,1}}|\widetilde G(x,\xi)||F(\boldsymbol{\tau}, u, \Gamma_{\mathrm{s}})(\xi)|\;d\xi\\
&\lesssim\begin{cases}
\delta^{-3/2}\e^{-\frac{R}{2\delta}}\left(e^{-\frac {d_m}\delta}+\delta^{-k}e^{-\frac {2(1-|\boldsymbol{\tau}|)}\delta}+\delta^{-\frac{1}{2}-k}\|u\|_{H^k (\slab_\delta)}^2\right),&x\in B_{i,\frac{1}{2}},\\
e^{-\frac {d_m}\delta}+\delta^{-k}e^{-\frac {2(1-|\boldsymbol{\tau}|)}\delta}+\delta^{-\frac{1}{2}-k}\|u\|_{H^k (\slab_\delta)}^2,&x\in B_{i,\frac{1}{2}}.
\end{cases}
\end{align*}

From the above estimate, we have
\begin{align*}
&|\widetilde{\psi}[\phi_1,\dots,\phi_n](x)|\\
&\leq \widetilde C \begin{cases}
\delta^{-3/2}\e^{-\frac{R}{2\delta}}\left(e^{-\frac {d_m}\delta}+\delta^{-k}e^{-\frac {2(1-|\boldsymbol{\tau}|)}\delta}+\delta^{-\frac{1}{2}-k}\sum_{i=1}^n\|\phi_i\|_{H^k (\slab_\delta)}^2\right)\\
+\delta^{-3/2}\e^{-\frac{R}{2\delta}}\delta\sum_{i=1}^n\left\|\phi_i\right\|_{C^{1,\alpha}(B_{i,2})}+\e^{-\frac{d-2R}{\delta}}\sum_{i=1}^n\left\|\phi_i\right\|_{C^{1,\alpha}(B_{i,2})},&x\in\cup_{i=1}^nB_{i,\frac{1}{2}},\\[4mm]
e^{-\frac {d_m}\delta}+\delta^{-k}e^{-\frac {2(1-|\boldsymbol{\tau}|)}\delta}+\delta^{-\frac{1}{2}-k}\sum_{i=1}^n\|\phi_i\|_{H^k (\slab_\delta)}^2\\
+\delta\sum_{i=1}^n\left\|\phi_i\right\|_{C^{1,\alpha}(B_{i,2})}+\e^{-\frac{d-2R}{\delta}}\sum_{i=1}^n\left\|\phi_i\right\|_{C^{1,\alpha}(B_{i,2})},&x\in \slab_\delta\backslash \cup_{i=1}^nB_{i,\frac{1}{2}}.
\end{cases}
\end{align*}
\end{proof}

\begin{remark}\label{rmk2}
For $v\in H^k (\slab_\delta)$, define the operator $D_{\phi_i}\widetilde\psi[\phi_1,\dots,\phi_n]v:H^k (\slab_\delta)\to H^k(\slab_\delta)$ as
$$D_{\phi_i}\widetilde\psi[\phi_1,\dots,\phi_n]v=\frac{\mathrm{d}}{\mathrm{d}t}\widetilde\psi[\phi_1,\dots,\phi_i+tv,\dots,\phi_n]\bigg|_{t=0},$$
$D_{\phi_i}f_{out}[\phi_1,\dots,\phi_n,\widetilde\psi,\Gamma_{\mathrm{s}}]$ is defined similarly. Then differentiating both sides of equation \eqref{121} with respect to $\phi_i$ gives
\begin{equation*}
\begin{aligned}
\begin{cases}
L_{out}\left[D_{\phi_i}\widetilde\psi[\phi_1,\dots,\phi_n]v\right]=D_{\phi_i}f_{out}[\phi_1,\dots,\phi_n,\widetilde\psi,\Gamma_{\mathrm{s}}]v,\;&\text{in }\slab_\delta,\\
D_{\phi_i}\widetilde\psi[\phi_1,\dots,\phi_n]v=0,\;&\text{on }\partial\slab_\delta,
\end{cases}
\end{aligned}
\end{equation*}
By the same method as in Proposition \ref{prop5.2} and \eqref{141}, for $v\in H^k (\slab_\delta)$ we obtain
\begin{equation}\label{128}
\left\|D_{\phi_i}\widetilde\psi[\phi_1,\dots,\phi_n]v\right\|_{H^{k}(\slab_\delta)}\lesssim\delta^{\frac{1}{2}}\|v\|_{H^{k}(\slab_\delta)}\quad\text{for all }i=1,\dots,n.
\end{equation}
\end{remark}

\subsection{Solving the inner problem}
For any given $\phi_i$, $i=1,\dots,n$ with $\|\phi_i\|_{H^{k}(\slab_\delta)}\leq\delta^k$, the outer problem admits a solution $\widetilde{\psi}[\phi_1,\dots,\phi_n]$. After substituting it into the inner problem, we apply the contraction mapping principle to show that, for sufficiently small $\delta$ and $\boldsymbol{\tau}\in P_{\varepsilon}$, the inner problem admits a solution.

The inner problem \eqref{in} is
\begin{equation}\label{417}
\begin{aligned}
L_{in}^i[\phi_i]=f_{in}[\phi_1,\dots,\phi_n,\widetilde{\psi},\widetilde\Gamma_{\mathrm{s}}],\quad i=1,\dots,n,
\end{aligned}
\end{equation}
where $$L_{in}^i:=\Delta-\gamma^\prime\left(U_i\right),$$
$$f_{in}[\phi_1,\dots,\phi_n,\widetilde{\psi},\widetilde\Gamma_{\mathrm{s}}]:=F(\boldsymbol{\tau},\phi_1,\dots,\phi_n,\widetilde{\psi},\widetilde\Gamma_{\mathrm{s}})+\left(\gamma^\prime\left(\sum_{i=1}^n\sigma_iU_i\right)-1\right)\widetilde\psi.$$
\begin{prop}\label{prop5.3}
For each \(i=1,\dots,n\), assume that $\|\phi^j\|_{H^{k}(\slab_\delta)}\leq\delta^{k+1}$ for all $j\neq i$. Then there exist constants $\widetilde C >0$ and $\delta_0 > 0$  such that for $\boldsymbol{\tau} \in P_{\varepsilon}$ and $\delta\in(0,\delta_0)$, the operator
$$\mathrm{T}^i_{in}[\phi_i]:=(L_{in}^i)^{-1}\left[f_{in}[\phi_1,\dots,\phi_n,\widetilde{\psi},\widetilde\Gamma_{\mathrm{s}}]+c_i[\phi_1,\dots,\phi_n,\widetilde{\psi},\widetilde\Gamma_{\mathrm{s}}]\p_{x_2} U_i\right]$$
has a fixed point $\widetilde\phi_i$ in the set
$$B_{in}^i:=\left\{\phi_i\in H_{\mathrm{e}}^{k}(\slab_\delta)\;\left|\; \|\phi_i\|_{H^{k}(\slab_\delta)}\leq\delta^{k+1}\right.\right\}.$$
Here $c_i[\phi_1,\dots,\phi_n,\widetilde{\psi},\widetilde\Gamma_{\mathrm{s}}]$ is the orthogonal projection coefficient such that the following orthogonality condition holds:
$$\left(f_{in}[\phi_1,\dots,\phi_n,\widetilde{\psi},\widetilde\Gamma_{\mathrm{s}}]+c_i[\phi_1,\dots,\phi_n,\widetilde{\psi},\widetilde\Gamma_{\mathrm{s}}]\p_{x_2} U_i,\p_{x_2} U_i\right)_{L^2(\slab_\delta)}=0.$$
And $\widetilde\phi_i$ satisfies
$$\left\|\widetilde{\phi_i}\right\|_{H^{k}(\slab_\delta)}\leq\widetilde C \left(\delta^{\frac{1}{2}}\sum_{j\neq i}\left\|\phi_j\right\|_{H^{k}(\slab_\delta)}+\delta^{\frac 14} | \log \delta|^{\frac{1}{2}}e^{-\frac {d_m}\delta}+\delta^{-k}e^{-\frac {2(1-|\boldsymbol{\tau}|)}\delta}\right),$$
$$\left\|\widetilde\phi_i\right\|_{C^{1,\alpha}(B_{i,2})}\leq\widetilde C \left(\delta^{\frac{1}{2}}\sum_{j\neq i}\left\|\phi_j\right\|_{H^{k}(\slab_\delta)}+\delta^{\frac 14} | \log \delta|^{\frac{1}{2}}e^{-\frac {d_m}\delta}+\delta^{-k}e^{-\frac {2(1-|\boldsymbol{\tau}|)}\delta}\right)\quad\text{for } k\geq 2.$$
\end{prop}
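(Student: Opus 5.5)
The plan is to apply the contraction mapping principle to $\mathrm{T}^i_{in}$ on $B^i_{in}$, with the other $\phi_j$ ($j\neq i$) held fixed. The outer solution $\widetilde\psi=\widetilde\psi[\phi_1,\dots,\phi_n]$ from Proposition \ref{prop5.2} and the surface profile $\widetilde\Gamma_{\mathrm{s}}=\widetilde\Gamma_{\mathrm{s}}[u]$ from Proposition \ref{prop5.1} are substituted into $f_{in}$, where $u=\sum_j\eta(\cdot)\phi_j+\widetilde\psi$.

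\textbf{Well-definedness and the first estimate.} By construction of $c_i$, the function $f_{in}+c_i\partial_{x_2}U_i$ satisfies the orthogonality condition \eqref{42}. Here $c_i=-(f_{in},\partial_{x_2}U_i)_{L^2}/\|\partial_{x_2}U_i\|_{L^2}^2$, and $\|\partial_{x_2}U_i\|_{L^2(\slab_\delta)}$ is bounded below uniformly for $|\tau_i|<1-\varepsilon$. Hence Corollary \ref{cor4.2}, applied with $k-2$ in place of $k$, gives a unique $\mathrm{T}^i_{in}[\phi_i]\in(I-P_i)H^{k}_{\mathrm e}$ with
\[
\|\mathrm{T}^i_{in}[\phi_i]\|_{H^k(\slab_\delta)}\lesssim \|f_{in}\|_{H^{k-2}(\slab_\delta)}+|c_i|\lesssim\|f_{in}\|_{H^{k-2}(\slab_\delta)}.
\]
It therefore suffices to bound $f_{in}=F(\boldsymbol\tau,u,\widetilde\Gamma_{\mathrm s})+(\gamma'(\sum_j\sigma_jU_j)-1)\widetilde\psi$ in $H^{k-2}$. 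I split $F(\boldsymbol\tau,u,\widetilde\Gamma_{\mathrm s})$ by the mean value theorem into four pieces:
\begin{itemize}
\item $F(\boldsymbol\tau,0,0)$, controlled by \eqref{1551};
\item $D_uF[u]$, controlled by \eqref{141}, which is quadratically small because its coefficient contains $\|u\|_{H^k}$, $\|(U_i)_{\mathrm{bc}}\|_{H^k}$ (Corollary \ref{C:Ubc}) and $\delta^{-1}\|\widetilde\Gamma_{\mathrm s}\|_{H^k}$;
\item $D_{\Gamma_{\mathrm s}}F[\widetilde\Gamma_{\mathrm s}]$, controlled by \eqref{142} together with Proposition \ref{prop5.1}, giving $\delta^{-1}(\delta^{1-k}e^{-2(1-|\boldsymbol\tau|)/\delta}+\delta^{\frac12-k}\|u\|_{H^k}^2)$;
\item the linear term $\gamma'(\sum_j\sigma_jU_j)-\gamma'(U_i)$ acting on $u$ near $B_{i,2}$, which is exponentially small by the argument of \eqref{338}.
\end{itemize}
The term $(\gamma'-1)\widetilde\psi$ is bounded by $\|\widetilde\psi\|_{H^k}$, and the $H^k$ bound of Proposition \ref{prop5.2} then gives $\delta^{\frac12}\sum_j\|\phi_j\|_{H^k}$ plus the two exponential terms. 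The $\phi_i$ contribution inside $\delta^{\frac12}\|\phi_i\|$ is absorbed into the left-hand side for small $\delta$. This yields the claimed $H^k$ bound, and in particular $\mathrm{T}^i_{in}$ maps $B^i_{in}$ into itself, since every term is $o(\delta^{k+1})$. For $\boldsymbol\tau\in P_\varepsilon$ both exponentials beat any power of $\delta$.

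\textbf{Contraction.} For $\phi_i,\phi_i'\in B^i_{in}$, I estimate the difference $f_{in}[\ldots\phi_i\ldots]-f_{in}[\ldots\phi_i'\ldots]$. This difference enters in three ways: through $u$ directly, with $D_uF$ small by \eqref{141}; through $\widetilde\psi$, via \eqref{128}, giving a factor $\delta^{\frac12}$; and through $\widetilde\Gamma_{\mathrm s}$, via \eqref{1908} combined with \eqref{142}. The resulting Lipschitz constant is $O(\delta^{\frac12})+O(\delta^{-1}(\delta^{\frac12-k}\delta^{k+1}+\delta^{\frac34-k}e^{-(1-|\boldsymbol\tau|)/\delta}))$. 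The projection coefficient $c_i$ is linear in $f_{in}$ and so is controlled by the same quantity. Hence the map is a contraction, and the contraction mapping principle gives the fixed point $\widetilde\phi_i$.

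\textbf{The $C^{1,\alpha}$ estimate and the main obstacle.} The $C^{1,\alpha}(B_{i,2})$ bound follows from \eqref{133} with $m=k-2>1$, i.e.\ $k>3$. For $2\le k\le3$, I would instead use the $C^0$ version preceding \eqref{133}, together with the pointwise bound on $\widetilde\psi$ in Proposition \ref{prop5.2} and the Sobolev embedding of $F$. This is the delicate step: one must check that the $C^0$ norm of $f_{in}$ is controlled by the same right-hand side. There is also a circularity to break. Proposition \ref{prop5.2} requires a priori the $C^{1,\alpha}$ smallness of all $\phi_j$, so the ball $B^i_{in}$ should effectively also carry the condition $\|\phi_i\|_{C^{1,\alpha}(B_{i,2})}\le\delta^{k+1}$, and invariance must be verified in that norm as well. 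I expect this bookkeeping of the $C^{1,\alpha}$ and pointwise norms, rather than the $H^k$ contraction, to be the main obstacle.
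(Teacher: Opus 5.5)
Your $H^k$ argument follows the paper's proof essentially line by line: invariance of $B^i_{in}$, $|c_i|\lesssim\|f_{in}\|_{H^{k-2}}$, the contraction via \eqref{141}, \eqref{142}, \eqref{1908}, \eqref{128}, and the fixed-point bound involving only $\sum_{j\neq i}$. One small slip: the term $(\gamma'(\sum_j\sigma_jU_j)-\gamma'(U_i))\phi_i$ is not part of $f_{in}$, because it was placed in the outer problem \eqref{out}. Your fourth bullet is therefore superfluous, though harmless.

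The gap is the $C^{1,\alpha}(B_{i,2})$ estimate, which you leave unverified for $2\le k\le 3$. This is exactly the range that matters, since the paper later fixes $k=2$.

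The missing observation is that \eqref{141} and \eqref{142} are already stated in $H^{k-1/2}(\slab_\delta)$, not only in $H^{k-2}$. The decay computation behind \eqref{1551} also works with more derivatives (cf.\ \eqref{1536}), and $\|(\gamma'(\sum_j\sigma_jU_j)-1)\widetilde\psi\|_{H^{k-1/2}}\lesssim\|\widetilde\psi\|_{H^k}$. Together these bound $f_{in}+c_i\partial_{x_2}U_i$ in $H^{k-1/2}(\slab_\delta)\subset H^{3/2}(\slab_\delta)$ for every $k\ge2$, with the same right-hand side. Then \eqref{133} with $m=3/2$ (so $\alpha<1/2$) gives the $C^{1,\alpha}$ bound directly, and this is how the paper argues. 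Your $C^0$ route can also be closed, but via the embedding $H^{3/2}\hookrightarrow C^0$ in two dimensions, not via the pointwise estimate.

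No pointwise bound on $\widetilde\psi$ is needed, so the circularity you describe does not arise. Only the $H^k$ output of Proposition~\ref{prop5.2} enters here. In that proposition, the existence of $\widetilde\psi$ and its $H^k$ bound never use the $C^{1,\alpha}$ hypothesis on the $\phi_j$; that hypothesis only feeds the pointwise estimate. Hence $B^i_{in}$ need not carry a $C^{1,\alpha}$ constraint, and the $C^{1,\alpha}$ bound is an output of the proposition, to be fed into the pointwise estimate later.

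Your remark that Proposition~\ref{prop5.2} is, as literally stated, invoked outside its hypotheses is fair. It is resolved in this way, not by enlarging the ball.
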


\begin{proof}
Firstly we prove that $\mathrm{T}^i_{in}[\phi_i]\in B^i_{in}$ for any $\phi_i\in B^i_{in}$. By the linear theory Lemma \ref{non-degeneracy proposition} for the inner problem and Lemma \ref{prop:sign of Uy}, 
\begin{align*}
\left\|\mathrm{T}^i_{in}[\phi_i]\right\|_{H^{k}(\slab_\delta)}&\lesssim\left\|f_{in}[\phi_1,\dots,\phi_n,\widetilde{\psi},\widetilde\Gamma_{\mathrm{s}}]+c_i[\phi_1,\dots,\phi_n,\widetilde{\psi},\widetilde\Gamma_{\mathrm{s}}]\p_{x_2} U_i\right\|_{H^{k-2}(\slab_\delta)}\\
&\lesssim\left\|F(\boldsymbol{\tau},\phi_1,\dots,\phi_n,\widetilde{\psi},\widetilde\Gamma_{\mathrm{s}})\right\|_{H^{k-2}(\slab_\delta)}+\left\|\widetilde\psi\right\|_{H^{k}(\slab_\delta)}+\left\|c_i[\phi_1,\dots,\phi_n,\widetilde{\psi},\widetilde\Gamma_{\mathrm{s}}]\right\|.
\end{align*}
Since $c_i[\phi_1,\dots,\phi_n,\widetilde{\psi},\widetilde\Gamma_{\mathrm{s}}]$ is the orthogonal projection coefficient, we obtain
\begin{align*}
&|c_i[\phi_1,\dots,\phi_n,\widetilde{\psi},\widetilde\Gamma_{\mathrm{s}}]|=\left|\frac{\left(f_{\mathrm{in}}[\phi_1,\dots,\phi_n,\widetilde\psi,\widetilde\Gamma_{\mathrm{s}}],\partial_{x_2}U_i\right)_{L^2(\slab_\delta)}}{\|\partial_{x_2}U_i\|_{L^2(\slab_\delta)}^2}\right|\\
&\lesssim\left\|f_{in}[\phi_1,\dots,\phi_n,\widetilde{\psi},\widetilde\Gamma_{\mathrm{s}}]\right\|_{H^{k-2}(\slab_\delta)}\left\|\p_{x_2} U_i\right\|_{L^2(\slab_\delta)}\lesssim\left\|f_{in}[\phi_1,\dots,\phi_n,\widetilde{\psi},\widetilde\Gamma_{\mathrm{s}}]\right\|_{H^{k-2}(\slab_\delta)}.
\end{align*}
Thus
$$\left\|\mathrm{T}^i_{in}[\phi_i]\right\|_{H^{k}(\slab_\delta)}\lesssim\left\|F(\boldsymbol{\tau},\phi_1,\dots,\phi_n,\widetilde{\psi},\widetilde\Gamma_{\mathrm{s}})\right\|_{H^{k-2}(\slab_\delta)}+\left\|\widetilde\psi\right\|_{H^{k}(\slab_\delta)}.$$
Applying \eqref{1551}, \eqref{141}, \eqref{142}, Proposition \ref{prop5.1} and Proposition \ref{prop5.2},
$$\left\|\mathrm{T}^i_{in}[\phi_i]\right\|_{H^{k}(\slab_\delta)}\lesssim\delta^{\frac{1}{2}}\sum_{j=1}^n\left\|\phi_j\right\|_{H^{k}(\slab_\delta)}+\delta^{\frac 14} | \log \delta|^{\frac{1}{2}}e^{-\frac {d_m}\delta}+\delta^{-k}e^{-\frac {2(1-|\boldsymbol{\tau}|)}\delta}.$$
From the above estimate, choosing $\delta$ small, then $\mathrm{T}^i_{in}[\phi_i]\in B^i_{in}$.

Next we prove $\mathrm{T}^i_{in}$ is a contraction on $B^i_{in}$. By \eqref{141}, \eqref{142}, \eqref{1908} and \eqref{128}, 
\begin{align*}
&\left\|\mathrm{T}^i_{in}[\phi_{i,1}]-\mathrm{T}^i_{in}[\phi_{i,2}]\right\|_{H^{k}(\slab_\delta)}\\
&\lesssim\left\|f_{in}[\phi_1,\dots,\phi_{i,1},\dots,\phi_n,\widetilde{\psi},\widetilde\Gamma_{\mathrm{s}}]-f_{in}[\phi_1,\dots,\phi_{i,2},\dots,\phi_n,\widetilde{\psi},\widetilde\Gamma_{\mathrm{s}}]\right\|_{H^{k-2}(\slab_\delta)}\\
&\quad\;+\left|c_i[\phi_1,\dots,\phi_{i,1},\dots,\phi_n,\widetilde{\psi},\widetilde\Gamma_{\mathrm{s}}]-c_i[\phi_1,\dots,\phi_{i,2},\dots,\phi_n,\widetilde{\psi},\widetilde\Gamma_{\mathrm{s}}]\right|\left\|\p_{x_2} U_i\right\|_{H^{k-2}(\slab_\delta)}\\
&\lesssim\left\|D_u F(\boldsymbol{\tau}, \zeta, \widetilde{\Gamma}_{\mathrm{s}})(\phi_{i,1}-\phi_{i,2})\right\|_{H^{k-2}(\slab_\delta)}+\left\|D_{\Gamma_{\mathrm{s}}} F (\boldsymbol{\tau}, u, \zeta)D_u \widetilde{\Gamma}_{\mathrm{s}}[\zeta](\phi_{i,1}-\phi_{i,2})\right\|_{H^{k-2}(\slab_\delta)}\\
&\quad\;+\left\|\gamma^\prime\left(\sum_{j=1}^n\sigma_jU_j\right)-1\right\|_{H^{k}(\slab_\delta)}\left\|D_{\phi_i}\widetilde\psi(\phi_{i,1}-\phi_{i,2})\right\|_{H^{k}(\slab_\delta)}\\
&\quad\;+\left|c_i[\phi_1,\dots,\phi_{i,1},\dots,\phi_n,\widetilde{\psi},\widetilde\Gamma_{\mathrm{s}}]-c_i[\phi_1,\dots,\phi_{i,2},\dots,\phi_n,\widetilde{\psi},\widetilde\Gamma_{\mathrm{s}}]\right|\\
&\lesssim\left(\delta^{-1} \left\|\widetilde{\Gamma}_{\mathrm{s}}\right\|_{H^{k}(\R)} + \|u\|_{H^{k}  (\slab_\delta)} + \sum_{j=1}^n\|(U_j)_\mathrm{bc}\|_{H^{k}  (\slab_\delta)}\right)\|\phi_{i,1}-\phi_{i,2}\|_{H^{k}(\slab_\delta)}+\delta\|\phi_{i,1}-\phi_{i,2}\|_{H^{k}(\slab_\delta)}\\
&\quad\;+\delta^{-1} \left\|\widetilde{\Gamma}_{\mathrm{s}}\right\|_{H^{k}(\R)}\left(\delta^{\frac 12-k}\|u\|_{H^k (\slab_\delta)} + \delta^{\frac 34-k} e^{-\frac {1-|\boldsymbol{\tau}|}\delta} \right)\|\phi_{i,1}-\phi_{i,2}\|_{H^{k}(\slab_\delta)}\\
&\quad\;+\left|c_i[\phi_1,\dots,\phi_{i,1},\dots,\phi_n,\widetilde{\psi},\widetilde\Gamma_{\mathrm{s}}]-c_i[\phi_1,\dots,\phi_{i,2},\dots,\phi_n,\widetilde{\psi},\widetilde\Gamma_{\mathrm{s}}]\right|.
\end{align*}
For $\left|c_i[\phi_1,\dots,\phi_{i,1},\dots,\phi_n,\widetilde{\psi},\widetilde\Gamma_{\mathrm{s}}]-c_i[\phi_1,\dots,\phi_{i,2},\dots,\phi_n,\widetilde{\psi},\widetilde\Gamma_{\mathrm{s}}]\right|$, we have
\begin{align*}
&\left|c_i[\phi_1,\dots,\phi_{i,1},\dots,\phi_n,\widetilde{\psi},\widetilde\Gamma_{\mathrm{s}}]-c_i[\phi_1,\dots,\phi_{i,2},\dots,\phi_n,\widetilde{\psi},\widetilde\Gamma_{\mathrm{s}}]\right|\\
&\lesssim\left\|f_{in}[\phi_1,\dots,\phi_{i,1},\dots,\phi_n,\widetilde{\psi},\widetilde\Gamma_{\mathrm{s}}]-f_{in}[\phi_1,\dots,\phi_{i,2},\dots,\phi_n,\widetilde{\psi},\widetilde\Gamma_{\mathrm{s}}]\right\|_{H^{k-2}(\slab_\delta)}.
\end{align*}
Thus
\begin{align*}
&\|\mathrm{T}^i_{in}[\phi_{i,1}]-\mathrm{T}^i_{in}[\phi_{i,2}]\|_{H^{k}(\slab_\delta)}\\
&\lesssim\left(\delta^{-1} \left\|\widetilde{\Gamma}_{\mathrm{s}}\right\|_{H^{k}(\R)} + \|u\|_{H^{k}  (\slab_\delta)} + \sum_{j=1}^n\|(U_j)_\mathrm{bc}\|_{H^{k}  (\slab_\delta)}\right)\|\phi_{i,1}-\phi_{i,2}\|_{H^{k}(\slab_\delta)}+\delta\|\phi_{i,1}-\phi_{i,2}\|_{H^{k}(\slab_\delta)}\\
&\quad\;+\delta^{-1} \left\|\widetilde{\Gamma}_{\mathrm{s}}\right\|_{H^{k}(\R)}\left(\delta^{\frac 12-k}\|u\|_{H^k (\slab_\delta)} + \delta^{\frac 34-k} e^{-\frac {1-|\boldsymbol{\tau}|}\delta} \right)\|\phi_{i,1}-\phi_{i,2}\|_{H^{k}(\slab_\delta)}.
\end{align*}

Finally, from Corollary \ref{C:Ubc} and Proposition \ref{prop5.1}, choosing $\delta$ small, then $\mathrm{T}^i_{in}$ is a contraction mapping on $B_{in}^i$. Then by the contraction mapping principle, $\mathrm{T}^i_{in}$ possesses a fixed point $\widetilde{\phi_i}$ in $B^i_{in}$. And we have
\begin{align*}
\left\|\widetilde{\phi_i}\right\|_{H^{k}(\slab_\delta)}&\lesssim \left\|f_{in}[\phi_1,\dots,\phi_{i-1},0,\phi_{i+1},\dots,\phi_n,\widetilde{\psi},\widetilde\Gamma_{\mathrm{s}}]\right\|_{H^{k-2}(\slab_\delta)}\\
&\lesssim\delta^{\frac{1}{2}}\sum_{j\neq i}\left\|\phi_j\right\|_{H^{k}(\slab_\delta)}+\delta^{\frac 14} | \log \delta|^{\frac{1}{2}}e^{-\frac {d_m}\delta}+\delta^{-k}e^{-\frac {2(1-|\boldsymbol{\tau}|)}\delta}.
\end{align*}
For the $C^{1,\alpha}$ estimate, by \eqref{133}, \eqref{1536} and the above estimates,
\begin{align*}
\left\|\widetilde\phi_i\right\|_{C^{1,\alpha}(B_{i,2})}&\lesssim\left\|f_{in}[\phi_1,\dots,\widetilde\phi_i,\dots,\phi_n,\widetilde{\psi},\widetilde\Gamma_{\mathrm{s}}]\right\|_{H^{\frac{3}{2}}(\slab_\delta)}\\
&\lesssim\left\|f_{in}[\phi_1,\dots,\widetilde\phi_i,\dots,\phi_n,\widetilde{\psi},\widetilde\Gamma_{\mathrm{s}}]\right\|_{H^{k-\frac{1}{2}}(\slab_\delta)}\\
&\lesssim\delta^{\frac{1}{2}}\sum_{j\neq i}\left\|\phi_j\right\|_{H^{k}(\slab_\delta)}+\delta^{\frac 14} | \log \delta|^{\frac{1}{2}}e^{-\frac {d_m}\delta}+\delta^{-k}e^{-\frac {2(1-|\boldsymbol{\tau}|)}\delta},\quad k\geq 2.
\end{align*}
\end{proof}
By a method similar to that in Remark \ref{rmk2}, for each $i$, we also have
\begin{equation}\label{440}
\left\|D_{\phi_j}\widetilde{\phi_i}[\phi_1,\dots,\phi_{i-1},\phi_{i+1},\dots,\phi_n]v\right\|_{H^{k}(\slab_\delta)}\lesssim\delta^{\frac{1}{2}}\|v\|_{H^{k}(\slab_\delta)}\quad\text{for all }j\neq i, v\in H^{k}(\slab_\delta),
\end{equation}
where 
$$D_{\phi_j}\widetilde{\phi_i}[\phi_1,\dots,\phi_{i-1},\phi_{i+1},\dots,\phi_n]v:=\frac{\mathrm{d}}{\mathrm{d}t}\widetilde{\phi_i}[\phi_1,\dots,\phi_j+tv,\dots,\phi_{i-1},\phi_{i+1},\dots,\phi_n]\bigg|_{t=0}$$
We employ the idea in \cite[section 9.2]{MR5018790}, repeatedly use Proposition \ref{prop5.3} and \eqref{440} to construct the functions $\phi_1,\dots,\phi_n$. The main idea is to solve the inner equations one by one, regarding the functions already constructed as fixed functions in the subsequent equations.

More precisely, we first fix $(\phi_2,\dots,\phi_n)$ and apply Proposition \ref{prop5.3} with $i=1$. This gives a unique fixed point
$$\widetilde\phi_1[\phi_2,\dots,\phi_n]\in B_{in}^1.$$
We then substitute this expression for $\phi_1$ into the equation
for $\phi_2$ and apply Proposition \ref{prop5.3} again. In this way,
$\phi_2$ is determined as a function of
$(\phi_3,\dots,\phi_n)$. Continuing this procedure, we successively
determine $\phi_i$ as a function of the remaining unknowns $(\phi_{i+1},\dots,\phi_n)$.

We next proceed to construct $\phi_2$. For fixed
$(\phi_2,\dots,\phi_n)$, the first equation has already been solved
and gives $\widetilde\phi_1[\phi_2,\dots,\phi_n]\in B_{\mathrm{in}}^1$. We substitute this expression into the second equation and regard
$\phi_3,\dots,\phi_n$ as fixed. Thus, the fixed-point problem for $\phi_2$
takes the form
{\small
\begin{align*}
&\phi_2=\mathrm{T}_{in}^2\left[\phi_2\right]\\
&:=(L_{in}^2)^{-1}\left[f_{in}[\widetilde\phi_1[\phi_2,\dots,\phi_n],\phi_2,\dots,\phi_n,\widetilde{\psi},\widetilde\Gamma_{\mathrm{s}}]+c_i[\widetilde\phi_1[\phi_2,\dots,\phi_n],\phi_2,\dots,\phi_n,\widetilde{\psi},\widetilde\Gamma_{\mathrm{s}}]\p_{x_2} U_i\right].
\end{align*}}
By Proposition \ref{prop5.3} and \eqref{440}, for $\delta>0$ sufficiently small, the above map is a contraction on $B_{in}^2$. Therefore, there exists a unique
$$\widetilde\phi_2[\phi_3,\dots,\phi_n]\in B_{in}^2,$$
and $\widetilde\phi_2$ satisfies the estimate
$$\left\|\widetilde\phi_2[\phi_3,\dots,\phi_n]\right\|_{H^k(\slab_\delta)}\lesssim
\delta^{\frac12}
\sum_{j=3}^n\|\phi_j\|_{H^k(\slab_\delta)}+\delta^{\frac14}|\log\delta|^{\frac12}e^{-\frac{d_m}{\delta}}+\delta^{-k}e^{-\frac{2(1-|\boldsymbol{\tau}|)}{\delta}},$$
$$\left\|\widetilde\phi_2[\phi_3,\dots,\phi_n]\right\|_{C^{1,\alpha}(B_{i,2})}\lesssim\delta^{\frac{1}{2}}\sum_{j=3}^n\left\|\phi_j\right\|_{H^{k}(\slab_\delta)}+\delta^{\frac 14} | \log \delta|^{\frac{1}{2}}e^{-\frac {d_m}\delta}+\delta^{-k}e^{-\frac {2(1-|\boldsymbol{\tau}|)}\delta}\quad\text{for } k\geq 2.$$
The construction for $\widetilde\phi_3,\dots,\widetilde\phi_n$ is analogous. 

Finally, solving the equation for $\phi_n$ and successively substituting
the resulting estimate back into those for $\phi_{n-1},\dots,\phi_1$,
we obtain, for all $i=1,\dots,n$,
\begin{equation}\label{1457}
\left\|\widetilde\phi_i\right\|_{H^k(\slab_\delta)}\lesssim\delta^{\frac14}|\log\delta|^{\frac12}e^{-\frac{d_m}{\delta}}+\delta^{-k}e^{-\frac{2(1-|\boldsymbol{\tau}|)}{\delta}},
\end{equation}
and by Proposition \ref{prop5.1}, Proposition \ref{prop5.2}
\begin{equation}\label{1458}
\left\|\widetilde{\psi}\right\|_{H^{k}(\slab_\delta)}\lesssim\delta^{\frac 14} |\log \delta|^{\frac{1}{2}}e^{-\frac {d_m}\delta}+\delta^{-k}e^{-\frac {2(1-|\boldsymbol{\tau}|)}\delta},
\end{equation}
\begin{equation}\label{1434}
\left\|\widetilde{\Gamma}_{\mathrm{s}}\right\|_{H^k(\R)}\lesssim\delta^{1-k} e^{-\frac {2(1-|\boldsymbol{\tau}|)}\delta}+\delta^{1-k}|\log \delta|e^{-\frac {2d_m}\delta}.
\end{equation}

\section{Proof of the main result} \label{proof main theorem sec}

This section is devoted to prove the main result of this paper. In Section \ref{Solve the inner-outer gluing system}, we constructed $(\widetilde\phi_1,\dots,\widetilde\phi_n,\widetilde\psi,\widetilde{\Gamma}_{\mathrm{s}})$ satisfying the following system
\begin{equation}\label{2209}
\begin{cases}
-\Delta\widetilde\phi_i+\gamma^\prime\left(U_i\right)\widetilde\phi_i+f_{in}[\widetilde\phi_1,\dots,\widetilde\phi_n,\widetilde{\psi},\widetilde\Gamma_{\mathrm{s}}]+c_i\p_{x_2} U_i=0,\;&\text{in }\slab_\delta, i=1,\dots,n,\\
-\Delta\widetilde\psi+\widetilde\psi=f_{out}[\widetilde\phi_1,\dots,\widetilde\phi_n,\widetilde\psi,\widetilde\Gamma_{\mathrm{s}}],\;&\text{in }\slab_\delta,\\
\widetilde\psi=0,\;&\text{on }\partial\slab_\delta,\\
\left(g-\alpha^2(\Diff)^2\right)\widetilde\Gamma_{\mathrm{s}}+G(\boldsymbol{\tau},\widetilde\phi_1,\dots,\widetilde\phi_n,\widetilde\psi,\widetilde\Gamma_{\mathrm{s}})=0,\;&\textrm{on }\mathbb{R}.
\end{cases}
\end{equation}
Therefore, it suffices to find a suitable $\boldsymbol{\tau}\in P_\varepsilon$ such that 
$$c_i=c_i[\widetilde\phi_1,\dots,\widetilde\phi_n,\widetilde{\psi},\widetilde\Gamma_{\mathrm{s}}]=0$$ for all $i=1,\dots,n.$
Once such a $\boldsymbol{\tau}$ is found, let
$$\tilde u(x):=\sum_{i=1}^n\eta_i\widetilde\phi_i(x)+\widetilde\psi(x),$$
where $\eta_i(x):=\eta\left(\frac{\delta}{R}
\left|x-\left(0,\frac{\tau_i}{\delta}\right)\right|
\right)$. We obtain that $(\tilde u,\widetilde\Gamma_{\mathrm{s}})$ is a solution of the original equation \eqref{perturbed problem}. For this purpose, we need the following lemma.
\begin{lemma}\label{lem6.1}
Let
$$\varphi_i:=\sigma_iU_i-\sigma_i(U_i)_\mathrm{bc}+\eta_i\widetilde\phi_i,$$
for all $i=1,\dots,n$, we have
$$\left|\left(\frac{\partial_{x_2}\varphi_i}
{1+\widetilde{\Gamma}'(\delta\,\cdot)},\eta_i\p_{x_2} U_i\right)_{L^2(\slab_\delta)}\right|\eqsim1,$$
$$\left|\left(\frac{\partial_{x_2}\varphi_i}
{1+\widetilde{\Gamma}'(\delta\,\cdot)},\eta_j\p_{x_2} U_j\right)_{L^2(\slab_\delta)}\right|\to 0,\quad\text{as }\delta\to0\text{ and }j\neq i,$$
where $\widetilde{\Gamma}=(\widetilde{\Gamma}_1,\widetilde{\Gamma}_2)$ (see Section \ref{1448})
\end{lemma}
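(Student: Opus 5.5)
The plan is to expand the integrand into a leading term and a sequence of exponentially small or $O(\delta)$ remainders. First, the conformal factor. By Proposition \ref{prop5.1} and \eqref{1434}, $\|\widetilde\Gamma_{\mathrm{s}}\|_{H^k(\R)}$ is exponentially small. The scaling identity $\|f(\delta\,\cdot)\|_{\dot H^s(\slab_\delta)}=\delta^{s-1}\|f\|_{\dot H^s(\slab_1)}$, together with Sobolev embedding (as in the proof of \eqref{141}), then gives
\[
\left\|\widetilde\Gamma'(\delta\,\cdot)\right\|_{L^\infty(\slab_\delta)}\lesssim \delta^{-1}\|\widetilde\Gamma_{\mathrm{s}}\|_{H^k(\R)}=o(1).
\]
Hence we may write $\frac{1}{1+\widetilde\Gamma'(\delta\,\cdot)}=1+r_\delta$ with $\|r_\delta\|_{L^\infty}=o(1)$. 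Since $\|\partial_{x_2}\varphi_i\|_{L^2}$ and $\|\eta_j\partial_{x_2}U_j\|_{L^2}$ are $O(1)$, Cauchy--Schwarz shows that replacing the denominator by $1$ changes each inner product only by $o(1)$.

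For the diagonal term I would split $\partial_{x_2}\varphi_i=\sigma_i\partial_{x_2}U_i-\sigma_i\partial_{x_2}(U_i)_{\mathrm{bc}}+\partial_{x_2}(\eta_i\widetilde\phi_i)$.
\begin{itemize}
\item The main piece is $\sigma_i(\partial_{x_2}U_i,\eta_i\partial_{x_2}U_i)_{L^2}$. It equals $\sigma_i\|\partial_{x_2}U\|_{L^2(\R^2)}^2$ up to the tail $\int(1-\eta_i)|\partial_{x_2}U_i|^2$, which is $O(e^{-2R/\delta})$ by Lemma \ref{prop:sign of Uy}. This piece is therefore $\eqsim 1$, uniformly in $\boldsymbol\tau\in P_\varepsilon$.
\item The boundary-correction piece is bounded via Cauchy--Schwarz and \eqref{2023}, giving $O(e^{-(1-|\tau_i|)/\delta})$. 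Alternatively, one can use \eqref{2025} directly after removing the cutoff.
\item For the correction piece, $\|\partial_{x_2}(\eta_i\widetilde\phi_i)\|_{L^2}\lesssim\|\widetilde\phi_i\|_{H^k}$, which by \eqref{1457} is exponentially small in $\delta$ once $\boldsymbol\tau\in P_\varepsilon$. Indeed $d_m>\varepsilon$ and $1-|\boldsymbol\tau|>\varepsilon/2$, so the factors $\delta^{-k}$ are absorbed by the exponentials.
\end{itemize}
Together these give the first claim.

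For $j\neq i$ the same decomposition applies, and the only non-trivial piece is $(\partial_{x_2}U_i,\eta_j\partial_{x_2}U_j)_{L^2}$. The cutoff $\eta_j$ is supported in $B_{j,2}$, where $|x-\tfrac{\tau_i}{\delta}e_2|\ge (d_m-2R)/\delta$. Lemma \ref{prop:sign of Uy} then gives $|\partial_{x_2}U_i|\lesssim e^{-(d_m-2R)/\delta}$ on this set, and $\|\partial_{x_2}U_j\|_{L^1}\lesssim 1$, so this piece is $O(e^{-(\varepsilon-2R)/\delta})$. Equivalently, one can repeat the convexity computation leading to \eqref{E:F0}, which gives the sharper bound $\delta^{1/4}|\log\delta|^{1/2}e^{-d_m/\delta}$. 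The remaining pieces involving $(U_i)_{\mathrm{bc}}$ and $\widetilde\phi_i$ are exponentially small exactly as in the diagonal case. The factor $r_\delta$ contributes $o(1)$ as before, and the second claim follows.

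The step I expect to be the main obstacle is the uniform $L^\infty$ control of $\widetilde\Gamma'(\delta\,\cdot)$ on the whole slab. One needs the harmonic extension \eqref{E:HarmExt} to transfer the $H^k(\R)$ bound on $\widetilde\Gamma_{\mathrm{s}}$ into the interior, including $\widetilde\Gamma_1'$ via Cauchy--Riemann. One also has to check that the $\delta^{-1}$ and $\delta^{1-k}$ losses in \eqref{1434} are beaten by $e^{-2\varepsilon/\delta}$. I would handle this by taking $k$ large enough (at most $k_0$) for Sobolev embedding to apply, and by working with the bounded multipliers $\sinh(|\xi|(x_2+1))/\sinh(2|\xi|)$ on $[-1,1]$.
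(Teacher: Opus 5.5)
Your proposal is correct and follows the paper's proof essentially step for step: you replace $1/(1+\widetilde\Gamma'(\delta\,\cdot))$ by $1$ at an $o(1)$ cost, reduce to $\sigma_i\partial_{x_2}U_i$ plus an $L^2$-small remainder, read off $\sigma_i\|\partial_{x_2}U_i\|_{L^2}^2$ on the diagonal, and use exponential decay of $\partial_{x_2}U_i$ on $\operatorname{supp}\eta_j$ off the diagonal. One aside should be dropped: \eqref{E:F0} controls the $L^2$ norm of $U_iU_j$, whereas the pairing $(\partial_{x_2}U_i,\eta_j\partial_{x_2}U_j)_{L^2}$ is an $L^1$-type quantity whose true size is of order $\delta^{-1/2}e^{-|\tau_i-\tau_j|/\delta}$ (cf.\ \eqref{1611}), so your ``sharper bound'' $\delta^{1/4}|\log\delta|^{1/2}e^{-d_m/\delta}$ fails for a pair realizing $d_m$; your primary bound $O(e^{-(\varepsilon-2R)/\delta})$ is correct and is all the lemma needs.
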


\begin{proof}
Recall that
$$\varphi_i=\sigma_iU_i-\sigma_i(U_i)_{\mathrm{bc}}+\eta_i\widetilde\phi_i,\qquad i=1,\dots,n.$$
By the Sobolev trace theorem, Sobolev embedding and \eqref{1434}, we have
$$\left\|\widetilde{\Gamma}'(\delta\,\cdot)\right\|_{L^\infty(\mathbb R)}\lesssim\left\|\widetilde{\Gamma}_{\mathrm{s}}\right\|_{H^k(\R)}\lesssim\delta^{1-k} e^{-\frac {2(1-|\boldsymbol{\tau}|)}\delta}+\delta^{1-k}|\log \delta|e^{-\frac {2d_m}\delta}=o(1),$$
and by Corollary \ref{C:Ubc}, \eqref{1457} and \eqref{1458},
$$\left\|\partial_{x_2}(U_i)_{\mathrm{bc}}\right\|_{L^2(\slab_\delta)}\lesssim \delta^{\frac 14} e^{-\frac {1 - |\tau|}\delta}=o(1),$$
$$\|\tilde u\|_{H^k(\slab_\delta)}\lesssim\delta^{\frac14}|\log\delta|^{\frac12}e^{-\frac{d_m}{\delta}}+\delta^{-k}e^{-\frac{2(1-|\boldsymbol{\tau}|)}{\delta}}=o(1),$$
as $\delta\to0$, uniformly for $\boldsymbol{\tau}\in P_\varepsilon$.
Hence
$$\frac{\partial_{x_2}\varphi_i}{1+\widetilde{\Gamma}'(\delta\,\cdot)}=\sigma_i\partial_{x_2}U_i+r_i,$$
where $\|r_i\|_{L^2(\slab_\delta)}=o(1)$.

For $j=i$, we therefore obtain
\begin{align*}
\left(\frac{\partial_{x_2}\varphi_i}{1+\widetilde{\Gamma}'(\delta\,\cdot)},\eta_i\partial_{x_2}U_i\right)_{L^2(\slab_\delta)}=\left(\sigma_i\partial_{x_2}U_i,\eta_i\partial_{x_2}U_i\right)_{L^2(\slab_\delta)}+\left(r_i,\eta_i\partial_{x_2}U_i\right)_{L^2(\slab_\delta)}.
\end{align*}
Since $\eta_i\equiv1$ in a neighborhood of the $i$-th concentration point and $\partial_{x_2}U_i$ is exponentially localized, we have
$$\left(\sigma_i\partial_{x_2}U_i,\eta_i\partial_{x_2}U_i\right)_{L^2(\slab_\delta)}=\sigma_i\|\partial_{x_2}U_i\|_{L^2(\slab_\delta)}^2+o(1)=O(1).$$
On the other hand, by Cauchy-Schwarz inequality,
$$\left|\left(r_i,\eta_i\partial_{x_2}U_i\right)_{L^2(\slab_\delta)}\right|\leq\|r_i\|_{L^2(\slab_\delta)}\|\partial_{x_2}U_i\|_{L^2(\slab_\delta)}=o(1).$$
It follows that
$$\left|\left(\frac{\partial_{x_2}\varphi_i}{1+\widetilde{\Gamma}'(\delta\,\cdot)},\eta_i\partial_{x_2}U_i\right)_{L^2(\slab_\delta)}\right|\eqsim 1.$$

It remains to consider $j\neq i$. In this case,
$$\left(\frac{\partial_{x_2}\varphi_i}{1+\widetilde{\Gamma}'(\delta\,\cdot)},\eta_j\partial_{x_2}U_j\right)_{L^2(\slab_\delta)}=\left(\sigma_i\partial_{x_2}U_i,\eta_j\partial_{x_2}U_j\right)_{L^2(\slab_\delta)}+\left(r_i,\eta_j\partial_{x_2}U_j\right)_{L^2(\slab_\delta)}.$$
The second term tends to zero by the estimate on $r_i$ and
Cauchy-Schwarz inequality. For the first term, Lemma \ref{prop:sign of Uy} and \eqref{E:F0} imply
\begin{equation}\label{1611}
\left|\left(\sigma_i\partial_{x_2}U_i,\eta_j\partial_{x_2}U_j\right)_{L^2(\slab_\delta)}\right|\lesssim\delta^{-\frac 12}e^{-\frac{d_m}\delta}\to0,\quad j\neq i.
\end{equation}
Therefore,
$$\left|\left(\frac{\partial_{x_2}\varphi_i}{1+\widetilde{\Gamma}'(\delta\,\cdot)},\eta_j\partial_{x_2}U_j\right)_{L^2(\slab_\delta)}\right|\to0,\quad j\neq i.$$
\end{proof}

By \eqref{2209}, we obtain
\begin{align*}
-\Delta\tilde u+\gamma^\prime\left(\textstyle\sum_{i=1}^n\sigma_iU_i\right)\tilde u+F(\boldsymbol{\tau},\tilde u,\widetilde\Gamma_{\mathrm{s}})+\sum_{i=1}^nc_i\eta_i\p_{x_2} U_i=0.
\end{align*}
Define
\begin{equation}\label{1635}
\mathcal{N}_{\boldsymbol{\tau}}(\tilde u,\widetilde\Gamma_{\mathrm{s}})=-\Delta\tilde u+\gamma^\prime\left(\textstyle\sum_{i=1}^n\sigma_iU_i\right)\tilde u+F(\boldsymbol{\tau},\tilde u,\widetilde\Gamma_{\mathrm{s}}),
\end{equation}
$$a_{ij}=\left(\frac{\partial_{x_2}\varphi_i}{1+\widetilde{\Gamma}'(\delta\,\cdot)},\eta_j\p_{x_2} U_j\right)_{L^2(\slab_\delta)},$$
then
\begin{equation}\label{1625}
\begin{pmatrix}
\left(\mathcal{N}_{\boldsymbol{\tau}}(\tilde u,\widetilde\Gamma_{\mathrm{s}}),\frac{\partial_{x_2}\varphi_1}{1+\widetilde{\Gamma}'(\delta\,\cdot)}\right)_{L^2(\slab_\delta)}\\
\left(\mathcal{N}_{\boldsymbol{\tau}}(\tilde u,\widetilde\Gamma_{\mathrm{s}}),\frac{\partial_{x_2}\varphi_2}{1+\widetilde{\Gamma}'(\delta\,\cdot)}\right)_{L^2(\slab_\delta)}\\
\vdots\\
\left(\mathcal{N}_{\boldsymbol{\tau}}(\tilde u,\widetilde\Gamma_{\mathrm{s}}),\frac{\partial_{x_2}\varphi_n}{1+\widetilde{\Gamma}'(\delta\,\cdot)}\right)_{L^2(\slab_\delta)}
\end{pmatrix}+\begin{pmatrix}
a_{11} & a_{12} & \cdots & a_{1n}\\
a_{21} & a_{22} & \cdots & a_{2n}\\
\vdots & \vdots & \ddots & \vdots\\
a_{n1} & a_{n2} & \cdots & a_{nn}
\end{pmatrix}\begin{pmatrix}
c_1\\
c_2\\
\vdots\\
c_n
\end{pmatrix}=0.
\end{equation}
By Lemma~\ref{lem6.1}, for sufficiently small $\delta$, the matrix $(a_{ij})_{1\leq i,j\leq n}$ is strictly diagonally dominant, i.e.,
$$|a_{ii}|>\sum_{j\neq i}|a_{ij}|,
\quad i=1,\dots,n.$$
Hence $(a_{ij})_{1\leq i,j\leq n}$ is invertible. Then, from \eqref{1625} we obtain
$$\begin{pmatrix}
c_1\\
c_2\\
\vdots\\
c_n
\end{pmatrix}=0\quad\Longleftrightarrow\quad\begin{pmatrix}
\left(\mathcal{N}_{\boldsymbol{\tau}}(\tilde u,\widetilde\Gamma_{\mathrm{s}}),\frac{\partial_{x_2}\varphi_1}{1+\widetilde{\Gamma}'(\delta\,\cdot)}\right)_{L^2(\slab_\delta)}\\
\left(\mathcal{N}_{\boldsymbol{\tau}}(\tilde u,\widetilde\Gamma_{\mathrm{s}}),\frac{\partial_{x_2}\varphi_2}{1+\widetilde{\Gamma}'(\delta\,\cdot)}\right)_{L^2(\slab_\delta)}\\
\vdots\\
\left(\mathcal{N}_{\boldsymbol{\tau}}(\tilde u,\widetilde\Gamma_{\mathrm{s}}),\frac{\partial_{x_2}\varphi_n}{1+\widetilde{\Gamma}'(\delta\,\cdot)}\right)_{L^2(\slab_\delta)}
\end{pmatrix}=0.$$
Thus, the required conditions
$$c_i=0,\quad i=1,\dots,n,$$
are equivalent to the reduced system
$$b_i(\boldsymbol{\tau}):=\left(\mathcal{N}_{\boldsymbol{\tau}}(\tilde u,\widetilde\Gamma_{\mathrm{s}}),\frac{\partial_{x_2}\varphi_i}{1+\widetilde{\Gamma}'(\delta\,\cdot)}\right)_{L^2(\slab_\delta)}=0,
\quad i=1,\dots,n.$$
Since $k_0\geq3$, by \cite[Lemma 3.8]{MR4577959}, for any integer $2\leq l\leq k_0$, both $F(\boldsymbol{\tau},u,\Gamma_{\mathrm{s}})$ and $G(\boldsymbol{\tau},u,\Gamma_{\mathrm{s}})$ belong to $H_{\mathrm{e}}^{l-1}(\slab_\delta)$, then let $l=3$, using Sobolev embedding,
\begin{equation}\label{1536}
F(\boldsymbol{\tau},u,\Gamma_{\mathrm{s}})\in H^2(\slab_\delta),\qquad \|F(\boldsymbol{\tau},u,\Gamma_{\mathrm{s}})\|_{L^\infty(\slab_\delta)}\lesssim\|F(\boldsymbol{\tau},u,\Gamma_{\mathrm{s}})\|_{H^2(\slab_\delta)}.
\end{equation}
Hence we may choose $k=2$ throughout the proof below.

\begin{proof}[Proof of Theorem~\ref{main euler theorem}]
The above argument shows that it suffices to find $\boldsymbol{\tau} \in P_\varepsilon$ such that $b_i(\boldsymbol{\tau}) = 0$ for all $i=1,\dots,n$. 

Substituting the expression \eqref{1635} of $\mathcal{N}_{\boldsymbol{\tau}}$ into $b_i(\boldsymbol{\tau})$, we obtain
{\small
\begin{align*}
&b_i(\boldsymbol{\tau})=\left(-\Delta(\omega_{\boldsymbol{\tau}}+\tilde u)+\left|1+\Gamma^\prime(\delta \,\cdot\,)\right|^2\gamma(\omega_{\boldsymbol{\tau}}+\tilde u),\frac{\partial_{x_2}\varphi_i}{1+\widetilde{\Gamma}'(\delta\,\cdot)}\right)_{L^2(\slab_\delta)}\\
&=\left(-\Delta(\sigma_iU_i-\sigma_i(U_i)_\mathrm{bc}+\eta_i\widetilde\phi_i),\frac{\partial_{x_2}\varphi_i}{1+\widetilde{\Gamma}'(\delta\,\cdot)}\right)_{L^2(\slab_\delta)}\\
&\quad\,+\left(\left|1+\widetilde{\Gamma}^\prime(\delta \,\cdot\,)\right|^2\gamma(\sigma_iU_i-\sigma_i(U_i)_\mathrm{bc}+\eta_i\widetilde\phi_i) ,\frac{\partial_{x_2}\varphi_i}{1+\widetilde{\Gamma}'(\delta\,\cdot)}\right)_{L^2(\slab_\delta)}\\
&\quad\,+\left(\left|1+\widetilde{\Gamma}^\prime(\delta \,\cdot\,)\right|^2\left(\gamma(\omega_{\boldsymbol{\tau}}+\tilde u)-\gamma(\sigma_iU_i-\sigma_i(U_i)_\mathrm{bc}+\eta_i\widetilde\phi_i)\right)-\sum_{j\neq i}\sigma_j\gamma(U_j) ,\frac{\partial_{x_2}\varphi_i}{1+\widetilde{\Gamma}'(\delta\,\cdot)}\right)_{L^2(\slab_\delta)}\\
&\quad\,+\left(\sum_{j\neq i}\sigma_j(U_j)_\mathrm{bc}-\sum_{j\neq i}\Delta(\eta_j\widetilde\phi_j)-\Delta\widetilde\psi,\frac{\partial_{x_2}\varphi_i}{1+\widetilde{\Gamma}'(\delta\,\cdot)}\right)_{L^2(\slab_\delta)}\\
&=:I_1+I_2+I_3+I_4.
\end{align*}}

For $I_1$ and $I_2$, we use the method in Section 5 of \cite{MR4577959}. By the definition of $\varphi_i$ and the boundary correction operator, we have
$$I_1=\left(-\Delta\varphi_i,\frac{\partial_{x_2}\varphi_i}{1+\widetilde{\Gamma}'(\delta\,\cdot)}\right)_{L^2(\slab_\delta)}.$$
Let
$$y=x+\delta^{-1}\widetilde{\Gamma}(\delta x)$$
and denote the corresponding image domain by
$\Omega(\boldsymbol{\tau})=(\mathrm{id}+\delta^{-1}\widetilde{\Gamma}(\delta\,\cdot))(\slab_\delta)$. Let
$$\Psi_i(y):=\varphi_i\circ
\left(\operatorname{id}
+\delta^{-1}\widetilde{\Gamma}(\delta\,\cdot)\right)^{-1},$$
then
$$\partial_{y_2}\Psi_i=\left(\frac{\partial_{x_2}\varphi_i}{1+\widetilde{\Gamma}'(\delta\,\cdot)}\right)\circ
\left(\operatorname{id}
+\delta^{-1}\widetilde{\Gamma}(\delta\,\cdot)\right)^{-1}.$$
Hence, by the conformal invariance of the Laplacian,
\begin{equation}\label{1506}
I_1=\int_{\Omega(\boldsymbol{\tau})}(-\Delta_y\Psi_i)\partial_{y_2}\Psi_i\,dy.
\end{equation}
Since $\Psi_i=0$ on $\partial\Omega(\boldsymbol{\tau})$, an integration by parts gives
\begin{align*}
I_1=-\int_{\Omega(\boldsymbol{\tau})}\Delta_y\Psi_i\,\partial_{y_2}\Psi_i\,dy&=\int_{\Omega(\boldsymbol{\tau})}\nabla\partial_{y_2}\Psi_i\cdot\nabla\Psi_i\,dy-\int_{\partial\Omega(\boldsymbol{\tau})}\partial_{y_2}\Psi_i\,\nabla\Psi_i\cdot N\,dS\\
&=\frac{1}{2}\int_{\partial\Omega(\boldsymbol{\tau})}|\nabla\Psi_i|^2N_2\,dS-\int_{\partial\Omega(\boldsymbol{\tau})}\partial_{y_2}\Psi_i\,\nabla\Psi_i\cdot N\,dS\\
&=-\frac{1}{2}\int_{\partial\Omega(\boldsymbol{\tau})}|\nabla\Psi_i|^2N_2\,dS,
\end{align*}
where 
$$N=(N_1,N_2)=\pm\frac{1}{\left|1+\widetilde{\Gamma}'(\delta\,\cdot)\right|}\left(
-\partial_{x_1}\widetilde{\Gamma}_2(\delta\,\cdot),1+\partial_{x_2}\widetilde{\Gamma}_2(\delta\,\cdot)
\right)\circ\left(\operatorname{id}
+\delta^{-1}\widetilde{\Gamma}(\delta\,\cdot)\right)^{-1}$$
denotes the outward unit normal vector and the sign is chosen according to the upper or lower component of
$\partial\tilde{\Omega}(\boldsymbol{\tau})$ respectively. Moreover, the boundary arc-length element is given by
$$dS=\left|1+\widetilde{\Gamma}'(\delta\,\cdot)\right|\circ\left(\operatorname{id}
+\delta^{-1}\widetilde{\Gamma}(\delta\,\cdot)\right)^{-1}\,dx_1.$$
Using the expression for the outward unit normal vector and the
arc-length element, we can rewrite $I_1$ as an integral over the
boundary of $\slab_\delta$. Since $\varphi_i|_{\partial\slab_\delta}=0$, tangential derivatives do not appear in the boundary integrals. Reversing the coordinate change, we obtain
\begin{equation}\label{eq:I1-boundary}
\begin{aligned}
I_1=&-\frac{1}{2}\displaystyle\int_{\mathbb R}\frac{1+\partial_{x_2}\widetilde{\Gamma}_2(\delta x_1,1)}{\left|1+\widetilde{\Gamma}'(\delta x_1,1)\right|^2}\left|\partial_{x_2}\varphi_i\left(x_1,\frac{1}{\delta}\right)\right|^2\,dx_1\\
&+\frac{1}{2}\displaystyle\int_{\mathbb R}\frac{1+\partial_{x_2}\widetilde{\Gamma}_2(\delta x_1,-1)}{\left|1+\widetilde{\Gamma}'(\delta x_1,-1)\right|^2}\left|\partial_{x_2}\varphi_i\left(x_1,-\frac{1}{\delta}\right)\right|^2\,dx_1.
\end{aligned}
\end{equation}

For the upper boundary, using Lemma \ref{prop:sign of Uy}, Corollary \ref{C:Ubc} and \eqref{1457}, we obtain
$$\int_{\mathbb R}\left|\partial_{x_2}\varphi_i\left(x_1,\frac{1}{\delta}\right)\right|^2\,dx_1\eqsim\delta^{1/2}e^{-\frac {2(1 - \tau_i)}\delta}.$$
Similarly, for the lower boundary,
$$\int_{\mathbb R}\left|\partial_{x_2}\varphi_i\left(x_1,-\frac{1}{\delta}\right)\right|^2\,dx_1\eqsim\delta^{1/2}e^{-\frac {2(1 +\tau_i)}\delta}.$$
From the proof of Lemma \ref{lem6.1},
$$\left|1+\widetilde{\Gamma}'\right|=1+O(\delta^{-1} e^{-\frac {2(1-|\boldsymbol{\tau}|)}\delta}+\delta^{-1}|\log \delta|e^{-\frac {2d_m}\delta}),$$
the factors involving $\widetilde{\Gamma}$ in
\eqref{eq:I1-boundary} contribute only to the lower-order terms. Hence
\begin{equation}\label{I1}
I_1=C_1\delta^{\frac 12}\left(e^{-\frac {2(1 +\tau_i)}\delta}-e^{-\frac{2(1-\tau_i)}\delta}\right)+O\left(\delta^{-\frac 12}e^{-\frac {3(1-|\boldsymbol{\tau}|)}\delta}+\delta^{-\frac 12}|\log \delta|e^{-\frac {2d_m+2(1-|\boldsymbol{\tau}|)}\delta}\right),
\end{equation}
where $C_1$ is a positive constant.

We next consider $I_2$, which can be handled directly by writing $\gamma$ as a derivative. Analogously to \eqref{1506}, we have
$$I_2=\int_{\Omega(\boldsymbol{\tau})}\gamma(\Psi_i)\partial_{y_2}\Psi_i\,dy.$$
Let $\mathcal{G}$ be an antiderivative of $\gamma$ satisfying
$\mathcal{G}(0)=0$, that is,
$$\mathcal{G}'(s)=\gamma(s).$$
Then
\begin{align*}
I_2&=\int_{\Omega(\boldsymbol{\tau})}\gamma(\Psi_i)\partial_{y_2}\Psi_i\,dy=\int_{\Omega(\boldsymbol{\tau})}\partial_{y_2}\mathcal{G}(\Psi_i)\,dy=\int_{\partial\Omega(\boldsymbol{\tau})}\mathcal{G}(\Psi_i)N_2\,dS.
\end{align*}
Since $\Psi_i=0$ on $\partial\Omega(\boldsymbol{\tau})$ and $\mathcal{G}(0)=0$, it follows that
\begin{equation}\label{1517}
I_2=\int_{\partial\Omega(\boldsymbol{\tau})}\mathcal{G}(\Psi_i)N_2\,dS=0.
\end{equation}

For $I_3$, by \eqref{1434},
$$I_3\eqsim\left(\gamma(\omega_{\boldsymbol{\tau}}+\tilde u)-\gamma(\sigma_iU_i-\sigma_i(U_i)_\mathrm{bc}+\eta_i\widetilde\phi_i)-\sum_{j\neq i}\sigma_j\gamma(U_j) ,\partial_{x_2}\varphi_i\right)_{L^2(\slab_\delta)}.$$
Since
\begin{align*}
&\gamma(\omega_{\boldsymbol{\tau}}+\tilde u)-\gamma(\sigma_iU_i-\sigma_i(U_i)_{\mathrm{bc}}+\eta_i\widetilde\phi_i)-\sum_{j\neq i}\sigma_j\gamma(U_j)\\
&\eqsim\sum_{j\neq i}\sigma_j(\gamma'(U_i)-1)U_j-\sum_{j\neq i}\sigma_j(U_j)_{\mathrm{bc}}+\sum_{j\neq i}\sigma_j\eta_j\widetilde\phi_j+\widetilde\psi,\qquad\text{in }B_{i,2},
\end{align*}
by Lemma \ref{prop:sign of Uy}, \eqref{1611}, Assumption \ref{assumption:A} and \eqref{1859}, we have
\begin{equation*}
\begin{aligned}
\int_{\slab_\delta}\sigma_i\partial_{x_2}U_i(\gamma'(U_i)-1)\sigma_jU_j\,dx&=C\gamma''(0)\sigma_i\sigma_j\int_{\slab_\delta}\partial_{x_2}(U_i^2)U_j\,dx\\
&=-C_2\sigma_i\sigma_j\operatorname{sgn}(\tau_i-\tau_j)\delta^{-\frac12}e^{-\frac{|\tau_i-\tau_j|}{\delta}},
\end{aligned}
\end{equation*}
where $C$ and $C_2$ are positive constants.

For the remaining terms, by the argument in Corollary \ref{C:Ubc}, \eqref{1611} and
\eqref{1434}, we obtain
\begin{align*}
&\int_{\slab_\delta}(\gamma^\prime(U_i)-1)U_j\p_{x_2}(U_i)_\mathrm{bc}\,dx+\int_{\slab_\delta}(\gamma^\prime(U_i)-1)U_j\p_{x_2}(\eta_i\widetilde\phi_i)\,dx-\int_{\slab_\delta}(U_j)_\mathrm{bc}\partial_{x_2}\varphi_i \,dx\\
&\lesssim\left|\left(\partial_{x_2}(U_i)_\mathrm{bc},(\gamma^\prime(U_i)-1)U_j\right)_{L^2(\slab_\delta)}\right|+\int_{B_{i,2}}|U_j||\p_{x_2}(\eta_i\widetilde\phi_i)|\,dx+\left|\left((U_j)_\mathrm{bc},\partial_{x_2}U_i\right)_{L^2(\slab_\delta)}\right|\\
&\lesssim \delta^{\frac34}|\log\delta|^{\frac12}e^{-\frac{d_m+|\tau_i-\tau_j|-2R}{\delta}}+\delta^{-\frac{3}{2}}e^{-\frac{2(1-|\boldsymbol{\tau}|)+|\tau_i-\tau_j|-2R}{\delta}}+\delta^{-\frac{5}{2}}\e^{-\frac{2-|\tau_i|-|\tau_j|}{\delta}}.
\end{align*}
On the other hand, since the supports of $\eta_i$ and $\eta_j$ are disjoint whenever $i\neq j$, by \eqref{1457} we have
\begin{align*}
\left(\eta_j\widetilde\phi_j,\frac{\partial_{x_2}\varphi_i}{1+\widetilde{\Gamma}'(\delta\,\cdot)}\right)_{L^2(\slab_\delta)}=O\left(\delta^{\frac34}|\log\delta|^{\frac12}e^{-\frac{d_m+|\tau_i-\tau_j|-2R}{\delta}}+\delta^{-\frac{3}{2}}e^{-\frac{2(1-|\boldsymbol{\tau}|)+|\tau_i-\tau_j|-2R}{\delta}}\right).
\end{align*}
Finally, using the pointwise estimate for $\widetilde\psi$ in Proposition \ref{prop5.2},
\begin{align*}
\left(\widetilde\psi,\frac{\partial_{x_2}\varphi_i}{1+\widetilde{\Gamma}'(\delta\,\cdot)}\right)_{L^2(\slab_\delta)}=O\left(\delta^{-3/2}\e^{-\frac{R}{2\delta}}\e^{-\frac {d_m}\delta}+\delta^{-k-3/2}\e^{-\frac{R}{2\delta}}\e^{-\frac {2(1-|\boldsymbol{\tau}|)}\delta}\right).
\end{align*}
Therefore,
\begin{equation}\label{1745}
\begin{aligned}
I_3&=-C_2\sum_{j\neq i}\sigma_i\sigma_jsgn(\tau_i-\tau_j)\delta^{-1/2}\e^{-\frac{|\tau_i-\tau_j|}{\delta}}+O\left(\e^{-\frac{R}{4\delta}}\e^{-\frac {d_m}\delta}+\e^{-\frac{R}{4\delta}}\e^{-\frac {2(1-|\boldsymbol{\tau}|)}\delta}\right)\\
&\quad\,+O\left(\sum_{j\neq i}\left(\e^{-\frac{d_m+|\tau_i-\tau_j|-2R}{\delta}}+\e^{-\frac{2(1-|\boldsymbol{\tau}|)+|\tau_i-\tau_j|-R}{\delta}}+\delta^{-\frac{5}{2}}\e^{-\frac{2-|\tau_i|-|\tau_j|}{\delta}}\right)\right).
\end{aligned}
\end{equation}

For $I_4$, analogously to $I_3$,
\begin{equation}\label{1929}
\begin{aligned}
I_4&=\left(\sum_{j\neq i}\sigma_j(U_j)_\mathrm{bc}-\sum_{j\neq i}\Delta(\eta_j\widetilde\phi_j)-\Delta\widetilde\psi,\frac{\partial_{x_2}\varphi_i}{1+\widetilde{\Gamma}'(\delta\,\cdot)}\right)_{L^2(\slab_\delta)}\\
&\lesssim\sum_{j\neq i}\int_{\slab_\delta}|(U_j)_\mathrm{bc}||\partial_{x_2}U_i| \,dx-\left(\widetilde\psi-f_{out}[\widetilde\phi_1,\dots,\widetilde\phi_n,\widetilde\psi,\widetilde\Gamma_{\mathrm{s}}],\frac{\partial_{x_2}\varphi_i}{1+\widetilde{\Gamma}'(\delta\,\cdot)}\right)_{L^2(\slab_\delta)}\\
&\quad\,+\sum_{j\neq i}\left(\delta^{\frac34}|\log\delta|^{\frac12}e^{-\frac{d_m+|\tau_i-\tau_j|-2R}{\delta}}+\delta^{-\frac{3}{2}}e^{-\frac{2(1-|\boldsymbol{\tau}|)+|\tau_i-\tau_j|-2R}{\delta}}\right)\\
&\lesssim\sum_{j\neq i}\left(\delta^{-\frac{1}{2}}e^{-\frac {2-\tau_i-\tau_j}\delta}+\delta^{-\frac{1}{2}}e^{-\frac {2+\tau_i+\tau_j}\delta}\right)+\delta^{-1/2}\e^{-\frac{R}{2\delta}}\e^{-\frac {d_m}\delta}+\delta^{-k-1/2}\e^{-\frac{R}{2\delta}}\e^{-\frac {2(1-|\boldsymbol{\tau}|)}\delta}\\
&\quad\,+\sum_{j\neq i}\left(\delta^{\frac34}|\log\delta|^{\frac12}e^{-\frac{d_m+|\tau_i-\tau_j|-2R}{\delta}}+\delta^{-\frac{3}{2}}e^{-\frac{2(1-|\boldsymbol{\tau}|)+|\tau_i-\tau_j|-2R}{\delta}}\right),
\end{aligned}
\end{equation}
here we used the pointwise estimate for $f_{out}$ in Proposition \ref{prop5.2}.

Combining \eqref{I1}, \eqref{1517}, \eqref{1745} and \eqref{1929}, we obtain
\begin{equation*}
\begin{aligned}
b_i(\boldsymbol{\tau})=&C_1\delta^{\frac 12}\left(\e^{-\frac {2(1 +\tau_i)}\delta}-\e^{-\frac {2(1 - \tau_i)}\delta}\right)-C_2\sum_{j\neq i}\sigma_i\sigma_jsgn(\tau_i-\tau_j)\delta^{-1/2}\e^{-\frac{|\tau_i-\tau_j|}{\delta}}\\
&+O\left(\sum_{j\neq i}\left(\e^{-\frac{d_m+|\tau_i-\tau_j|-R}{\delta}}+\e^{-\frac{2(1-|\boldsymbol{\tau}|)+|\tau_i-\tau_j|-R}{\delta}}\right)+\delta^{-\frac{1}{2}}\e^{-\frac{3(1-|\boldsymbol{\tau}|)}\delta}+\e^{-\frac{d_m+2(1-|\boldsymbol{\tau}|)}\delta}\right)\\
&+O\left(\e^{-\frac{R}{4\delta}}\e^{-\frac {d_m}\delta}+\e^{-\frac{R}{4\delta}}\e^{-\frac {2(1-|\boldsymbol{\tau}|)}\delta}\right).
\end{aligned}
\end{equation*}
For simplicity, we assume that $\boldsymbol{\tau}\in P_{\varepsilon}$ (here $\varepsilon<\frac{1}{n}$) satisfing
$$-1<\tau_n<\tau_{n-1}<\cdots<\tau_{2}<\tau_{1}<1$$
and
\begin{equation}\label{454}
2(1-\tau_1)=2(\tau_n+1)+o(1)=\tau_{k}-\tau_{k+1}+o(1), \quad\text{for }k=1,\dots,n-1,\text{ as }\delta\to 0.
\end{equation}
Indeed, $2(\tau_n+1)\approx2(1-\tau_1)\approx\tau_{k}-\tau_{k+1}=\frac{2}{n}+o(1)$ for all $k=1,\dots,n-1$, as $\delta\to 0$. Then we have
\begin{align*}
b_1(\boldsymbol{\tau})=0&\;\iff\widetilde C_1\delta^{-1}e^{\frac{2(1-\tau_1)-(\tau_1-\tau_2)}{\delta}}-1+O\left(\delta^{-1}\e^{-\frac{1}{n\delta}}+\e^{-\frac{R}{2\delta}}\right)=0,\\
b_2(\boldsymbol{\tau})=0&\;\iff\widetilde C_2e^{\frac{(\tau_1-\tau_2)-(\tau_2-\tau_3)}{\delta}}-1+O\left(e^{-\frac{1}{n\delta}}+\e^{-\frac{R}{2\delta}}\right)=0,\\
&\;\;\,\,\,\,\vdots\\
b_j(\boldsymbol{\tau})=0&\;\iff\widetilde C_je^{\frac{(\tau_{j-1}-\tau_{j})-(\tau_{j}-\tau_{j+1})}{\delta}}-1+O\left(e^{-\frac{1}{n\delta}}+\e^{-\frac{R}{2\delta}}\right)=0,\\
&\;\;\,\,\,\,\vdots\\
b_{n-1}(\boldsymbol{\tau})=0&\;\iff\widetilde C_{n-1}e^{\frac{(\tau_{n-2}-\tau_{n-1})-(\tau_{n-1}-\tau_{n})}{\delta}}-1+O\left(e^{-\frac{1}{n\delta}}+\e^{-\frac{R}{2\delta}}\right)=0,\\
b_n(\boldsymbol{\tau})=0&\;\iff\widetilde C_n\delta^{-1}e^{\frac{2(1+\tau_n)-(\tau_{n-1}-\tau_{n})}{\delta}}-1+O\left(\delta^{-1}\e^{-\frac{1}{n\delta}}+\e^{-\frac{R}{2\delta}}\right)=0,
\end{align*}
where $\widetilde C_i>0$ are bounded for all $i=1,\dots,n$, and we have used the fact that $\sigma_i\sigma_j=-1$ for adjacent
indices $i$ and $j$. 

We now prove that for small $\delta$, there exists $\boldsymbol{\tau}\in P_\varepsilon$ ($\varepsilon<\frac{1}{n}$), which is the solution to the following system of equations
\begin{equation}\label{129}
\begin{cases}
\tau_2-3\tau_1&=-2+\delta\log\delta-\delta\log\widetilde C_1+O\left(\delta^{-1}\e^{-\frac{1}{n\delta}}+\e^{-\frac{R}{2\delta}}\right)\\
\tau_1-2\tau_2+\tau_3&=-\delta\log\widetilde C_2+O\left(e^{-\frac{1}{n\delta}}+\e^{-\frac{R}{2\delta}}\right)\\
&\cdots\\
\tau_{j-1}-2\tau_{j}+\tau_{j+1}&=-\delta\log\widetilde C_j+O\left(e^{-\frac{1}{n\delta}}+\e^{-\frac{R}{2\delta}}\right)\\
&\cdots\\
\tau_{n-2}-2\tau_{n-1}+\tau_{n}&=-\delta\log\widetilde C_{n-1}+O\left(e^{-\frac{1}{n\delta}}+\e^{-\frac{R}{2\delta}}\right)\\
3\tau_n-\tau_{n-1}&=-2+\delta\log\delta-\delta\log\widetilde C_n+O\left(\delta^{-1}\e^{-\frac{1}{n\delta}}+\e^{-\frac{R}{2\delta}}\right).
\end{cases}
\end{equation}
To prove the solvability of the above system of equations, we consider we consider a matrix $A_n$ of the form
$$A_n=\begin{pmatrix}
-3&1\\
-1&3
\end{pmatrix}\quad\text{for }n=2,\qquad A_n=\begin{pmatrix}
-3 & 1 & 0 & \cdots & 0\\
1 & -2 & 1 & \ddots & 0\\
\vdots & \ddots & \ddots & \ddots & 0\\
0 & \cdots & 1 & -2 & 1\\
0 & \cdots & 0 & -1 & 3
\end{pmatrix}\quad\text{for }n\geq3.$$
Since $A_n$ is an irreducible, weakly diagonally dominant matrix and is strictly diagonally dominant in at least one row, it follows that $A_n$ is nonsingular. Then system \eqref{129} is solvable, and a direct calculation shows that solution $\tilde{\boldsymbol{\tau}}=(\tilde\tau_1,\dots,\tilde\tau_n)$ satisfies
\begin{equation}\label{1526}
\begin{cases}
\tilde\tau_1=\frac{n-1}{n}+O\left(\delta\log\delta\right)\\
\tilde\tau_2=\frac{n-3}{n}+O\left(\delta\log\delta\right)\\
\quad\cdots\\
\tilde\tau_{j}=\frac{n-2j+1}{n}+O\left(\delta\log\delta\right)\\
\quad\cdots\\
\tilde\tau_{n-1}=\frac{3-n}{n}+O\left(\delta\log\delta\right)\\
\tilde\tau_n=\frac{1-n}{n}+O\left(\delta\log\delta\right)
\end{cases}\quad\text{as }\delta\to0.
\end{equation}
Then $(\tilde{\boldsymbol{\tau}}, \tilde u(\tilde{\boldsymbol{\tau}}), \widetilde{\Gamma}_{\mathrm{s}}(\tilde{\boldsymbol{\tau}}))$ is a solution to \eqref{perturbed problem}, thus corresponds to a solution to the stationary capillary-gravity wave problem.

The stream function is given by
\begin{equation*}
\Psi =  \left(\omega_{\tilde{\boldsymbol{\tau}}}+ \tilde u (\tilde{\boldsymbol{\tau}})\right) \circ \left( \tfrac 1\delta \left( id + \widetilde \Gamma (\tilde{\boldsymbol{\tau}}) \right)^{-1}\right), 
\end{equation*}
defined on 
$$\Omega = \left(id + \widetilde \Gamma (\tilde{\boldsymbol{\tau}}) \right) \left(\{(x_1,x_2)\in\R^2\;|\;|x_2|< 1\}\right).$$
For simplicity of presentation, let
$$\widetilde v(\cdot/\delta):=\Psi-\omega_{\tilde{\boldsymbol{\tau}}}(\cdot/\delta).$$
And from the construction in Section \ref{1448}, we have
$$\zeta_\delta=\widetilde\Gamma_2\circ\left(id+\widetilde\Gamma_1(\cdot,1)\right)^{-1}.$$
Then by \eqref{1457}-\eqref{1434} and scaling property,
\begin{align*}
&\left\|\widetilde v(\cdot/\delta)\right\|_{H^2(\Omega)}\lesssim\delta^{-1}\left\|\widetilde v\right\|_{H^2(\Omega/\delta)}\\
&\lesssim\delta^{-1}\left(\sum_{i=1}^n\left\|\widetilde\phi_i\right\|_{H^2(\slab_\delta)}+\left\|\widetilde{\psi}\right\|_{H^2(\slab_\delta)}+\left\|\widetilde{\Gamma}_{\mathrm{s}}\right\|_{H^2(\R)}\right)\lesssim\delta^{\beta}\exp\left(-\frac{2}{n\delta}\right),
\end{align*}
\begin{align*}
\|\zeta_\delta\|_{H^2(\mathbb R)}\lesssim\left\|\widetilde{\Gamma}_{\mathrm{s}}\right\|_{H^2(\R)}\lesssim\delta^{\beta}|\log\delta|\exp\left(-\frac{2}{n\delta}\right),
\end{align*}
where constant $\beta$ is determined by \eqref{1526}. This completes the proof of the main theorem. 
\end{proof}

\section*{Acknowledgements}
Y. Wang is partially supported by NSF of China (No. 12471110). Y. Zheng is supported by NSF of China (No. 12671246).

\bibliography{RefDatabase}{}
\bibliographystyle{plain}

\end{document}